\renewcommand{\geq}{\geqslant}
\renewcommand{\leq}{\leqslant}
\newtheorem{theorem}{Theorem}[chapter]
\newtheorem{cor}[theorem]{Corollary}
\newtheorem{lemma}[theorem]{Lemma}
\newtheorem{conj}[theorem]{Conjecture}
\newtheorem{prop}[theorem]{Proposition}
\theoremstyle{remark}
\theoremstyle{definition}
\newtheorem{defi}[theorem]{Definition}
\newcommand{\C}{\mathbb{C}}
\newcommand{\R}{\mathbb{R}}
\newcommand{\la}{\langle}
\newcommand{\ra}{\rangle}
\newcommand{\e}{\varepsilon}
\newcommand{\B}{\mathfrak{B}}
\newcommand{\h}{\mathscr{H}}
\newcommand{\conv}{\mathrm{conv}}
\newcommand{\K}{\mathbb{K}}
\newcommand{\rk}{\mathrm{rk \,}}
\newcommand{\ee}{\mathcal{E}}
\newcommand{\hh}{\mathcal{H}}
\newcommand{\1}{\mathbf 1}
\newcommand{\tr}{\mathrm{tr \,}}
\newcommand{\pos}{\mathrm{pos \,}}
\newcommand{\D}{\mathscr{D}}
\newcommand{\al}{\alpha}
\newcommand{\be}{\beta}
\newcommand{\g}{\gamma}
\newcommand{\G}{\Gamma}
\newcommand{\La}{\Lambda}
\newcommand{\eps}{\varepsilon}
\newcommand{\E}{\mathbb{E}}
\newcommand{\Z}{{\mathbb{Z}}}
\newcommand{\PP}{\mathbb{P}}
\newcommand{\A}{\mathrm{A}}
\newcommand{\cc}{\mathcal{C}}
\newcommand{\dist}{\textrm{dist}}
\newcommand{\diag}{\mathrm{diag}}
\newcommand{\inte}{\mathrm{int}}
\title{Analytic and Probabilistic Problems in Discrete Geometry}
\author{Gergely Ambrus}
\begin{document}
\TitlePage \decpage

\abstract The thesis concentrates on two problems in discrete geometry, whose
solutions are obtained by analytic, probabilistic and combinatoric tools.

The first chapter deals with the strong polarization problem. This states that
for any sequence $u_1,\dots,  u_n$ of norm 1 vectors in a real Hilbert space
$\mathscr H$, there exists a unit vector  $v \in \mathscr H$, such that
\[
\sum \frac{1}{\la u_i, v \ra^2} \leq n^2.
\]
The 2-dimensional case is proved by complex analytic methods. For the higher
dimensional extremal cases, we prove a tensorisation result that is similar to
F. John's theorem about characterisation of ellipsoids of maximal volume. From
this, we deduce that the only full dimensional locally extremal system is the
orthonormal system. We also obtain the same result for the weaker, original
polarization problem.

The second chapter investigates a problem in probabilistic geometry. Take $n$
independent, uniform random points in a triangle $T$. Convex chains between two
fixed vertices of $T$ are defined naturally. Let $L_n$ denote the maximal size
of a convex chain. We prove that the expectation of $L_n$ is asymptotically
$\alpha \, n^{1/3}$, where $\alpha$ is a constant between $1.5$ and $3.5$ -- we
conjecture that the correct value is 3. We also prove strong concentration
results for $L_n$, which, in turn, imply a limit shape result for the longest
convex chains.

\endabstract

\acknowledgements

I would like to express my deep gratitude towards my supervisors, Keith M. Ball
and Imre B\'ar\'any. Our -- luckily, innumerable --  meetings always served as a
constant inspiration to me. While they have been the greatest source of
knowledge on mathematical research,  I also have learnt much from them about
the general philosophy of this profession. Had it been not for them, I may not
have become a mathematician.

Many thanks go to my former supervisors, Ferenc Fodor and Andr\'as Bezdek, who,
besides helping to start my research carrier, also played a large part in
shaping my young mind.

The generous support of the UCL Graduate School Research Scholarship was
essential for living in London.  That this time was as enjoyable as it has been
is largely due to my flatmates and friends, in particular, members of the
Imperial College Caving Club.

Finally, I thank my family members Judit, Imre and P\'eter for the warm and
encouraging atmosphere.
\newpage

\foreword The two topics discussed in this thesis are of a quite different
character. Chapter~1 is concerned with functional analytic properties of
discrete point sets. Chapter~2 belongs to the area of probabilistic discrete
geometry, and it reflects a more quantitative approach. This difference is by
virtue of my having two supervisors. However, in all the subsequent results,
the main motivating force is the underlying, clear and beautiful geometric
structure, that provides a  natural bond of the dissertation.
\newpage

\tableofcontents
\chapter{Polarization problems} \label{polarchapter}

The original polarization problem states the following: for any sequence $u_1,
\dots ,u_n$ of unit vectors in $\R^n$, there exists a unit vector $v \in \R^n$,
for which
\[
\prod |\la u_i, v \ra| \geq {n^{-n/2}}.
\]
We will also study the following stronger conjecture, that we call the strong
polarization problem. This asserts that under the above conditions, there is a
a unit vector $v$, such that
\[
\sum \frac{1}{\la u_i, v \ra^2 } \leq n^2.
\]
After giving a picture of the state of the art of the problem, in
Sections~\ref{complextoolssec} and~\ref{planarcasesec} we give a complex
analytic proof for the strong polarization problem in the case, when all the
vectors are in a plane. The proof depends on the structure of equioscillating
functions. For the higher dimensional problem, by linear algebraic
transformations described in Section~\ref{linalgsec}, we arrive to conjectures
about the location of inverse eigenvectors of Gram matrices. This is followed
by a geometric interpretation, where the difference between the two conjectures
becomes apparent as well. In Section~\ref{polarsec}, by an argument similar to
F. John's theorem, we deduce that the only full dimensional extremal vector
system for the polarization problem is the orthonormal system. Finally, in
Section~\ref{strongpolarsec}, we prove the analogous statement for the strong
polarization problem, and we characterise the locally extremal cases,
regardless of their dimension.

\section{History}\label{historypolarsec}

Polarization problems originate from the theory of infinite dimensional Banach
spaces. The reason for the term is that they are relatives of the general
polarization inequality, which relates the norm of a homogeneous polynomial to
the norm of its associated symmetric linear form. This inequality and other
related topics can be found in the monograph of Dineen \cite{dineen}, see
Section 1.3 therein.

The first articles about the polarization problem have been published roughly
at the same time, in 1998, by Ryan and Turett \cite{ryanturett} and by Ben\'itez,
Sarantopoulos and Tonge \cite{benitez}. They introduced the following notion.

\begin{defi}[\cite{benitez}]\label{polarconstdef}
Let $X$ be a Banach space and $X^*$ its dual space. The {\em $n^{th}$ linear
polarization constant of $X$}, to be denoted by $c_n(X)$, is given by
\[
c_n(X)=\inf \{M>0: \|\phi_1\| \dots \| \phi_n \| \leq M \|\phi_1 \dots \phi_n
\|,\ \forall \phi_1, \dots, \phi_n \in X^* \}.
\]
The {\em polarization constant of $X$} is
\[
c(X) = \limsup_{n \rightarrow \infty} \, (c_n(X))^{1/n}.
\]
\end{defi}

Ryan and Turett investigate the geometric structure of spaces of polynomials
and their preduals, and they prove that $c_n(X)< \infty$. The paper
\cite{benitez} is devoted to the polarization constant, and among more general
results, the authors show that for complex Banach spaces, $c_n(X) \leq n^n$.
For Banach spaces in general, this is the best possible upper bound, as is
shown by choosing $X=l_1$, and $\phi_i$ to be different coordinate functionals.
On the other hand, for arbitrary spaces, we have the trivial lower bound
$c_n(X) \geq 1$.

R\'ev\'esz and Sarantopoulos showed \cite{reveszsaran} that in
Definition~\ref{polarconstdef}, the ``$\limsup$'' can be changed to ``$\lim$''.

For real Banach spaces, K. Ball's affine plank theorem \cite{ballsymm}
 applies, and it yields a stronger result: for any set
of functionals $\phi_1, \dots, \phi_n$ in $X^*$, there is a point $x \in B_X$,
such that $|\phi_i(x)\| \geq  \|\phi_i\|/n$ {\em for every $i$}. Thus, for real
and complex Banach spaces the same result holds: $c_n(X) \leq n^n$.

The next stage was investigating Hilbert spaces. Let $\h$ be a (real or
complex) Hilbert space. By the Riesz Representation Theorem,  elements of
$\h^*$ are obtained by taking inner products with elements of $\h$. Therefore,
if $S_\h$ denotes, as usual, the unit sphere of $\h$, then
\[
c_n(\h) = \inf \left\{M>0: \forall \, u_1, \dots, u_n \in S_\h, \exists \, v
\in S_\h: |\la u_1, v\ra \dots \la u_n, v\ra| \geq \frac{1}{M} \right\}.
\]
The statement means that for any set of $n$ unit vectors in $\h$, there exists
a unit vector which is ``far away'' from subspaces orthogonal to the given
vectors. Considering an orthonormal system $u_1, \dots, u_n$, the inequality
between the geometric and the quadratic means implies that if $\dim \h \geq n$,
then for any unit vector $v$,
\[
\left| \prod \la u_i, v \ra  \right| \leq \frac{1}{n^{n/2}} \sum \la u_i, v
\ra^2 = \frac{1}{n^{n/2}},
\]
and hence $c_n(\h)\geq n^{n/2}$. On the other hand, using Dvoretzky's theorem,
it is not hard to show that if $X$ is an infinite dimensional Banach space,
then $c_n(X) \geq c_n(l_2^n)$, where $l_2^n$ is $\C^n$ endowed with the $l_2$
norm. Either by this result, or from the complex version of Bang's Lemma, it
follows that $c_n(\h) \leq n^n$.

It is natural to conjecture that the ``worst'' case arises when $(u_i)_1^n$ is
the $n$-dimensional orthonormal system: one would think that the orthogonal
subspaces of the vectors are ``spread out'' the most in this case.
Arias-de-Reyna proved in 1998 \cite{arias}, that for {\em complex Hilbert
spaces}, indeed, the right constant is $n^{n/2}$, and $c_n(\h)=n^{n/2}$, if
$\h$ is at least $n$ dimensional. His pretty proof is based on estimating the
variance of products of complex Gaussian random variables with the aid of
Lieb's inequality on permanents. He also conjectured that, as in the case of
Banach spaces, the best possible constant for real Hilbert spaces agrees with
the one for complex Hilbert spaces. Assuming that the dimension of the space is
at least $n$, and that $v$ is in the subspace spanned by $u_1, \dots u_n$, the
statement goes as follows.

\begin{conj}[Real polarization problem]\label{realpolconj}
For any collection $u_1, \dots ,u_n$ of unit vectors in $\R^n$, there exists a
unit vector $v \in \R^n$, such that
\begin{equation}\label{realpolineq}
\prod_{i=1}^{n} |\la u_i, v \ra| \geq {n^{-n/2}}.
\end{equation}
\end{conj}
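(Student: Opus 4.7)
The plan is to attack the conjecture via compactness, subdifferential variational analysis at the extremum, and a rigidity argument in the spirit of the F.~John characterization of the maximal volume ellipsoid. Setting $F(u_1,\dots,u_n) := \max_{v \in S^{n-1}} \prod_{i=1}^n |\la u_i, v\ra|$, the infimum of $F$ over $(S^{n-1})^n$ is attained by compactness; call a minimizer $(u_i^*)$ with one maximizing vector $v^*$. Testing on the standard orthonormal basis with $v=(1,\dots,1)/\sqrt{n}$ gives $F = n^{-n/2}$, so the conjecture reduces to showing that every minimizer satisfies $F(u_i^*) \geq n^{-n/2}$; it would suffice to prove that any such minimizing configuration is, up to orthogonal transformation and sign changes, orthonormal.

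The first-order conditions proceed in two stages. Optimality of $v^*$ on the sphere reads
\[
v^* \;=\; \lambda \sum_{i=1}^{n} \frac{u_i^*}{\la u_i^*, v^*\ra}
\]
for some $\lambda>0$. If one then varies each $u_i^*$ on its sphere, the envelope theorem (with $v^*$ held fixed) gives $\nabla_{u_i} F$ proportional to $v^*$, so tangency to the sphere would force $v^* \parallel u_i^*$ for every $i$ -- impossible at a genuine minimum. Hence the maximizer $v^*$ cannot be unique, and one must work instead with the Clarke subdifferential: at a minimizer there exist multiple maximizers $\{v^*_\alpha\}$ and weights $c_{\alpha,i}\geq 0$ with $\sum_\alpha c_{\alpha,i} v^*_\alpha$ parallel to $u_i^*$. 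Combined with one Lagrange identity per $v^*_\alpha$, this should produce, after a suitable averaging, a rank-one decomposition of the identity on $\R^n$ -- the John-type tensorisation the excerpt refers to in Section~\ref{polarsec} -- from which orthonormality of the $u_i^*$ would be extracted.

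The main obstacle is precisely this rigidity step, since the set of maximizers can be combinatorially rich -- already for the ONS it has $2^{n-1}$ elements -- and the non-linear dependence of the Lagrange identity on $v^*_\alpha$ means the averaging must be carefully weighted. A secondary route would mimic Arias-de-Reyna's complex proof by estimating $\E \prod \la u_i, g\ra^2$ for a real Gaussian $g$; but the resulting symmetric-function expression lacks a real analogue of Lieb's permanent inequality, which is exactly why the real case remains open. Realistically I expect this plan, carried out faithfully, to deliver the paper's stated characterization of the orthonormal system as the unique \emph{full-dimensional} local minimum, while ruling out lower-dimensional critical configurations -- and hence settling the full conjecture -- will likely require genuinely new ideas.
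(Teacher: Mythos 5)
What you have written is a research plan, not a proof, and it does not establish the statement --- which, to be clear, is an open conjecture in the paper as well: the text never proves Conjecture~\ref{realpolconj} in full, it only proves the planar case (via the strong version) and a characterisation of full-dimensional locally extremal systems. The decisive gap in your plan is the passage from ``every full-dimensional local minimizer of $F$ is orthonormal'' to ``the global infimum of $F$ is $n^{-n/2}$''. Even if your Clarke-subdifferential/averaging step were carried out and delivered a John-type identity $\sum c_i\, u_i\otimes u_i = \lambda I_n$ at a full-dimensional critical configuration, the global minimizer could be linearly \emph{dependent}, i.e.\ the vectors $u_i^*$ could span a proper subspace, and then no full-dimensional rigidity statement applies. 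This is not a technicality: Leung, Li and Rakesh \cite{leung} showed that if the conjecture fails, the minimising system \emph{must} be linearly dependent, so the degenerate case is exactly where the whole difficulty sits. The paper runs into the same wall: in the proof of Theorem~\ref{polarextr} the separation argument uses $\ker M=\{\mathbf 0\}$ to kill the matrix $K$, and when the Gram matrix $M$ is singular the trace estimate $\tr D\geq n$ is lost and the argument ``does not go through'', as Section~\ref{polarsec} states explicitly. You acknowledge this yourself in your last sentence, which amounts to conceding that the conjecture is not proved.

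Beyond that structural gap, the variational core of your argument is only sketched. The claim that non-uniqueness of the maximizer plus Clarke subdifferentials yields, ``after a suitable averaging'', a rank-one decomposition of the identity is precisely the hard tensorisation step, and no mechanism is given for producing the weights or for extracting orthonormality from the resulting identity (note that $\sum c_i\, u_i\otimes u_i = \lambda I_n$ alone does not force the $u_i$ to be orthonormal --- John decompositions exist with $m>n$ non-orthogonal vectors). The paper's route is different in its technical realisation and worth comparing with yours: it linearises the problem first, replacing the critical point condition by the statement that $\alpha_i = 1/\la u_i,v\ra$ is an \emph{inverse eigenvector} of the Gram matrix (Section~\ref{linalgsec}), recasts the conjecture as a statement about an ellipsoid $\ee=\{x: x^\top M x=n\}$ meeting every branch of the hyperboloid $|\prod x_i|=1$, and then performs the John-type optimisation over the \emph{matrix} $M$ with fixed diagonal, separating positive cones of rank-one matrices under trace duality. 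That formulation is what makes the tensorisation identity actually provable --- but, again, only under the nonsingularity hypothesis, which is why even the paper obtains a local, full-dimensional characterisation rather than the conjecture itself.
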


Informally, the conjecture says that for any system of unit vectors in $\R^n$,
there is a unit vector that has ``large'' inner product with them in the above
sense. We will see that it cannot be required that the all the inner products
are large, unlike in the case of the plank problems.

As a converse of this statement, it {\em is } true, and a well-known fact, that
there is a unit vector $v$, for which $|\la u_i, v \ra|\leq 1/\sqrt{n}$ for all
$i$. For a generalisation of this, see Ball and Prodromou~\cite{ballprod}.

The complex plank theorem of K. Ball, published in 2001 \cite{ballcomplex},
states that if $u_1, \dots, u_n$ are unit vectors in a complex Hilbert space
$\h$, and $(t_i)_1^n$ is a sequence of positive reals satisfying $\sum t_i^2
=1$, then there exists another unit vector $v \in \h$, for which $|\la u_i, v
\ra| \geq t_i $ for every $i$.  On one hand, it immediately implies
Arias-de-Reyna's  estimate for the polarization constant of complex Hilbert
spaces. On the other hand, the result of Ben\'itez, Sarantopoulos and Tonge for
complex Banach spaces also follows from it. To this end, let $\phi_1, \dots,
\phi_n \in X^*$, and for each $i$, let $x_i$ be a point in $B_X$ where $\phi_i$
attains its norm. We shall search for a point $x$ in $\textrm{span} \{x_1,
\dots, x_n \}$, where $| \phi_1(x) \dots \phi_n(x)|$ is large. Hence we may
assume that $X$ is an $n$-dimensional Banach space. If $X$ and $Y$ are
isomorphic Banach spaces, then their {\em Banach--Mazur distance} $d(X,Y)$ is
given by
\[
d(X,Y) = \inf \{  \|T \| \| T^{-1}\|: \  T:X \rightarrow Y \textrm{ is an
isomorphism} \}
\]
Now, the well-known result of F. John \cite{john} about characterization of
simplices of maximal volume in convex bodies implies that if $X$ is an
$n$-dimensional Banach space, then $d(X, l_2^n) \leq \sqrt{n}$. Applying the
complex plank theorem, it easily follows (see \cite{reveszsaran}), that there
exists a point $x$ in the unit ball of $X$, for which $|\phi_i(x)| \geq 1/n$
for every $i$, which, in turn, implies that $c_n(X) \leq n^n$ for any complex
Banach space.

The real polarization problem has been investigated in many articles. The
complex result applied to the natural complexification of $\R^n$ yields that
$c_n(\R^n) \leq 2^{n/2-1} n^{n/2}$ (this was already mentioned in
\cite{ryanturett}, see also R\'ev\'esz and Sarantopoulos \cite{reveszsaran}).
Pappas and R\'ev\'esz proved in \cite{pappas} the following result: if $\K$ denotes
$\C$ or $\R$, then
\[
c(\K^d) = e^{- L(d, \K)},
\]
with
\[
L(d,K) = \int_{S_{\K^d}}\log |\la x, u \ra|\,  d \sigma(x),
\]
where $u$ is an arbitrary vector of $S_{\K^d}$  and $\sigma$ denotes the
normalised surface area measure.

It turns out that if the number of dimensions is at most $5$, then
Conjecture~\ref{realpolconj} can be proved by choosing a unit vector $v$ which
is obtained by normalising one point of the Bang system $\B$ generated by $u_1,
\dots, u_n$ (see Pappas and R\'ev\'esz \cite{pappas}). Matolcsi and Mu\~{n}oz showed
\cite{matomuno}, that this approach fails to prove the general conjecture in
higher dimensions; as a positive result, they managed to derive from it that
the orthonormal system is locally extremal with respect to the polarization
problem.

Another approach is to relate the best constant in (\ref{realpolineq}) to
eigenvalues and the determinant of the Gram matrix of the vector system $( u_1,
\dots, u_n )$, using a method that is similar to the one presented in
Section~\ref{linalgsec}. This idea has been raised by Marcus (see
\cite{reveszsaran}), and later elaborated by Matolcsi (\cite{mato1} and
\cite{mato2}). However, due to the difficulties of estimating the various
quantities related to the eigenvalues, the resulting inequalities do not seem
to be more approachable than the original one.

In $2008$, P. Frenkel \cite{frenkel}  returned to the method of Arias-de-Reyna
used for the case of complex Hilbert spaces. He managed to strengthen
Hadamard's inequality on determinants and Lieb's inequality on permanents with
the aid of pfaffians and hafnians. These results led to the following bound:
\[
c_n(\R^d) \leq \sqrt{n(n+2)(n+4) \dots (3n - 2)}< \left( \frac{3 \sqrt{3}}{e}\,
n \right)^{n/2} \approx (1.91)^{n/2}n^{n/2}
\]
At the moment, this is the strongest general bound on the real polarization
constant.

Also in 2008, Leung, Li, and Rakesh proved that if Conjecture~\ref{realpolconj}
fails, then the minimising vector system $(u_1, \dots, u_n)$ must be linearly
dependent. Their approach is similar to the one in Section~\ref{linalgsec}.

It was observed by P. Frenkel and K. Ball, that the following, stronger
alternative of the polarization problem has remarkable geometric properties. We
will mainly devote our attention to this problem.

\begin{conj}[Strong polarization problem]\label{strongpolar}
For any set $u_1, \dots ,u_n$ of unit vectors in $\R^d$, there exists a unit
vector $v \in \R^d$, such that
\[
\sum_{i=1}^{n} \frac{1}{\la u_i, v \ra^2 } \leq n^2.
\]
\end{conj}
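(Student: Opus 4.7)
The plan is three-fold: a direct analytic treatment in the plane, a linear-algebraic reduction to a nonlinear eigenvalue problem in arbitrary dimension, and an F.\ John-type argument to characterise the extremal configurations.

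First, for the two-dimensional case I would parametrise $u_i = (\cos\theta_i, \sin\theta_i)$ and $v = (\cos\theta, \sin\theta)$, so that the objective becomes
\[
F(\theta) = \sum_{i=1}^n \frac{1}{\cos^2(\theta - \theta_i)}.
\]
One observes that $F = -G'$, where $G(\theta) = \sum_i \tan(\theta - \theta_i)$, and after the substitution $z = e^{i\theta}$ the function $G$ extends to a rational function with $2n$ simple poles on the unit circle. I would exploit this meromorphic structure to describe where $F$ attains its minimum, and then bound that minimum by a Chebyshev-type equioscillation argument on a suitable trigonometric polynomial associated with $G$, pinning down the orthonormal configuration as the unique extremal one.

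Second, in arbitrary dimension consider a local minimiser $v \in S^{d-1}$ of $F$ on the open set where all $\la u_i,v\ra \neq 0$. Writing $c_i = 1/\la u_i,v\ra$ and applying Lagrange multipliers gives
\[
\sum_{i=1}^n c_i^3\, u_i = \mu\, v, \qquad F(v) = \mu,
\]
a nonlinear eigenvalue problem for the Gram matrix of $(u_i)$. Following the strategy of F.\ John's theorem, I would combine these stationarity relations with the normalisations $|u_i|=|v|=1$ to extract a decomposition of the identity
\[
\sum_{i=1}^n \lam_i\, u_i u_i^{\top} = I_d,
\]
with nonnegative weights $\lam_i$ built from the $c_i$'s. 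The resulting algebraic rigidity should force any full-dimensional locally extremal system to be orthonormal, giving the desired characterisation of equality.

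The main obstacle is the gap between such a local extremality statement and the global inequality $\min F \leq n^2$. Even after showing that the only full-dimensional stationary point is the orthonormal system, one still has to exclude lower-dimensional or degenerate critical points where $F$ might exceed $n^2$, and the nonlinear eigenvalue problem above does not immediately yield an a priori upper bound on $\mu$. I would try to close this gap by a continuous deformation from $(u_i)$ to an orthonormal system along which $\min F$ could be tracked; establishing monotonicity — or producing any uniform bound on $\mu$ in dimension $d\geq 3$ — is the step I expect to be genuinely hard, which is consistent with the conjecture being open beyond the planar case.
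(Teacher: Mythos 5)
You should first note that the statement you were asked to prove is stated in the thesis as a \emph{conjecture}: the text proves it only in the planar case (Theorem~\ref{planarpol}, and a second proof via the rank-two case of Conjecture~\ref{mconj}), and beyond that establishes a John-type characterisation of locally extremal configurations (Theorem~\ref{strongpolarextr}, Corollary~\ref{stpolfull}), not the inequality itself. Your three-part programme --- complex-analytic treatment in the plane, a stationarity/linear-algebraic reduction in general dimension, and a John-type rigidity statement for extremals --- runs parallel to the thesis, and your closing admission that the local-to-global step is missing is accurate: that gap is precisely why the conjecture is open for $d\geq 3$.

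There are, however, concrete defects in the plan as written. In the plane, your stated goal of ``pinning down the orthonormal configuration as the unique extremal one'' is wrong for $n>2$: no orthonormal system of $n$ vectors exists in $\R^2$, and the planar extremals are the configurations for which $(\pm u_1,\dots,\pm u_n)$ is equally distributed on the circle; the remarkable point (emphasised in the thesis) is that the extremal \emph{value} nevertheless equals $n^2$. Also $F=+G'$, not $-G'$, and more substantively the sketch ``exploit the meromorphic structure, then a Chebyshev-type equioscillation argument'' is not yet an argument: the thesis needs the half-angle identity $\la u_j,v\ra=|z-z_j|/2$, the lemma that for a locally extremal configuration all local minima of $\sum|z-z_j|^{-2}$ on the arcs coincide (Lemma~\ref{equimini}), and the reciprocal-polynomial construction of Lemma~\ref{equifunc} to force the minimum to be exactly $n^2/4$ (alternatively, a comparison with the Chebyshev polynomial in the reformulated setting of Conjecture~\ref{mconj}). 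In higher dimensions your Lagrange relation $\sum_i c_i^3 u_i=\mu v$ with $F(v)=\mu$ is correct, but the thesis explicitly avoids minimisers of $F$ because this condition is hard to exploit, working instead with stationary points of $\prod\la u_i,v\ra$ on $\|v\|=\sqrt n$, which yield the cleaner inverse-eigenvector equation $v=\sum_i u_i/\la u_i,v\ra$. The decisive gap is the next step: a decomposition $\sum_i \lam_i\, u_iu_i^{\top}=I_d$ cannot be extracted from the stationarity of a single $v$ for a fixed system $(u_i)$; John-type identities arise from extremality over a \emph{family} (here, ellipsoids attached to the Gram matrix) together with a separation argument involving \emph{all} contact points, which is exactly how Theorem~\ref{strongpolarextr} is proved --- and note that identity lives in $\R^n$ (coefficient space), not $\R^d$, and even there it only rules out full-dimensional locally extremal cases and gives no a priori bound on $\mu$. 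So both the John-decomposition step and the local-to-global step are genuine gaps, and no deformation/monotonicity argument of the kind you suggest is available in the thesis either.
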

By the arithmetic mean--geometric mean inequality, we immediately see that the
strong polarization problem is indeed stronger than
Conjecture~\ref{realpolconj}, the real polarization problem. The advantage of
this version over the older one will become apparent in the subsequent
sections. For illustration, let us present one aspect here.

It is conjectured that the only  extremal vector system in the real
polarization conjecture is the orthonormal system consisting of $n$ unit
vectors in $\R^n$. Therefore, if the number of vectors is larger than the
dimension of $X$, we expect a stronger inequality to hold. The simplest example
of this phenomenon is obtained when $X= \R^2$: If $(u_1, \dots, u_n)$ be a
system of vectors on the unit circle, then, via the connection to the Chebyshev
constant, the best constant turns out to be $2^{n-1}$, see~\cite{anagn}. This
is obtained when the point set $(u_1, -u_1, \dots, u_n, -u_n)$ is equally
distributed on the unit circle. The same example shows as well that the
assertion of the affine plank theorem is essentially sharp, and nothing close
to the estimate of the complex plank problem is true in the real setting.

Considering the strong polarization problem, the picture is entirely different.
As will be proved in Section~\ref{planarcasesec}, the best constant obtained
for systems of $n$ vectors on the unit circle is the same as the one we get for
the $n$-dimensional orthonormal system! Therefore, we ``don't gain anything''
by leaving the 2-dimensional space for $\R^n$, although, intuitively, one would
think that in the latter it is possible to go ``much farther away'' from the
orthogonal subspaces than in the plane. This rather remarkable geometric
property was the first to suggest that the strong polarization problem is a
good deal more natural than its original version, and in some sense it serves
as the real analogue of the complex plank problem.

\section{Complex analytic tools}\label{complextoolssec}

The planar, $d=2$ case of the polarization problems can perhaps be most
naturally formulated on the complex unit circle $T$, that we sometimes identify
with the interval $[0,2 \pi]$ via the formula $z = e^{i t}$. Suppose that the
norm 1 vectors $u_1, \dots,u_n$ on $S^1$ are given by
\[
u_j = \left(\cos \frac{t_j}{2}, \sin \frac{t_j}{2}\right).
\]
We shall search for the vector $v \in S^1$ in the form
\[
v =
\left(\cos\left(\frac{t}{2}-\frac{\pi}{2}\right),\sin\left(\frac{t}{2}-\frac{\pi}{2}\right)\right).
\]
Define the complex numbers $z_j$ and $z$ on $T$ by
\[
z_j = e^{i t_j} \, , \, z= e^{i t}.
\]
Then, with the above notations,
\begin{equation}\label{sinabs}
\la u_j, v \ra = \sin \left(\frac{t-t_j}{2}\right) = \frac{|z - z_j|}{2}\,.
\end{equation}
Thus, the $d=2$ case of the polarization problems can be formulated as
statements about trigonometric polynomials (see the definition below). It is
natural, and indeed fruitful, to consider the analytic continuation of these
functions from $T$ to the complex plane, resulting in complex rational
functions. By this means, we derive alternate formulations of the original
statements that can be tackled by strong complex analytic tools. For an
illustration of the power of this method, let us mention one example.

As we have discussed earlier, it is conjectured that the only extremal vector
system for the original polarization problem is the $n$-dimensional orthonormal
system. Therefore, if all the vectors $(u_i)_1^n$ are on the plane, we expect a
stronger inequality to hold. The following statement gives the estimate that is
the best possible.

\begin{prop}[\cite{anagn}]
For any set $u_1, \dots, u_n$ of unit vectors on $S^1$, there exists $v \in
S^1$, such that
\[
\prod |\la u_j,v \ra| \geq 2^{-(n-1)}.
\]
\end{prop}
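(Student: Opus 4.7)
My plan is to reformulate the inequality on the complex unit circle $T$, exactly as at the start of Section~\ref{complextoolssec}. Writing $z_j = e^{it_j}$ and $z = e^{it}$, formula (\ref{sinabs}) identifies $|\la u_j, v \ra|$ with $|z - z_j|/2$, so the proposition is equivalent to showing that the monic polynomial
\[
p(z) = \prod_{j=1}^n (z - z_j)
\]
of degree $n$ satisfies $\max_{z \in T} |p(z)| \geq 2$. Since every root of $p$ lies on $T$, its constant term $c := p(0) = \prod_j(-z_j)$ has modulus one.

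The key step is to sample $p$ at the $n$ solutions of the equation $w^n = c$; these all lie on $T$ because $|c|=1$. Using the elementary identity $\sum_{w^n = c} w^k = 0$ for $1 \leq k \leq n-1$ (factor out any fixed root and apply the vanishing of sums of non-trivial $n$-th roots of unity), only the leading and constant coefficients of $p$ contribute, and a one-line calculation gives
\[
\sum_{w^n = c} p(w) = n\, c + n\, p(0) = 2 n c.
\]
Taking absolute values and applying the triangle inequality, at least one sample point $w^\star$ must satisfy $|p(w^\star)| \geq 2|c| = 2$; the unit vector corresponding to $w^\star$ is the required $v$.

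I do not expect any serious obstacle, as the argument reduces to one averaging identity. The only genuine idea is to choose the correct sampling set: \emph{any} $n$ equispaced points on $T$ would annihilate the middle Fourier modes of $p$, but one must select them with an overall phase that makes the two surviving contributions $n\, c$ and $n\, p(0)$ add constructively rather than cancel, which is achieved precisely by the choice $c = p(0)$. This is what upgrades the trivial bound $\max|p| \geq 1$ (obtained, e.g., by the mean value property, or by Jensen's formula since $|p(0)|=1$) to the sharp $\max|p| \geq 2$. Sharpness of the constant $2^{-(n-1)}$ is witnessed by taking the $z_j$ to be the $n$-th roots of a fixed $\gamma \in T$, i.e.\ $p(z) = z^n - \gamma$.
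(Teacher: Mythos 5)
Your proof is correct and is essentially the paper's own argument: the paper also averages $Q(z)=\prod(z-z_j)$ over $n$ equispaced points $w\,e^{i2\pi k/n}$ with the phase chosen so that $w^n=(-1)^n z_1\cdots z_n=p(0)$, i.e.\ exactly over your sampling set $\{w: w^n=c\}$, and concludes by the triangle inequality. Your remarks on sharpness likewise match the paper's example $z_j=e^{i2\pi j/n}$ (the case $\gamma=1$).
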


\begin{proof}
Using (\ref{sinabs}), it suffices to prove that for any set $z_1, \dots , z_n$
of complex numbers of norm 1, there exists $z \in T$, for which
\[
|(z-z_1) \dots (z-z_n)|\geq 2.
\]
Define the complex polynomial $Q(z):=\prod(z-z_j)$. Then for any complex number
$w \in T$, we have
\[
\frac{1}{n}\sum_{k=1}^{n} Q(w \, e^{i 2 \pi k/n}) = w^n + (-1)^n z_1 \dots z_n.
\]
Choose $w$ so that $w^n=(-1)^n z_1 \dots z_n$. Then, by the above formula,
\[ 2= |w^n + (-1)^n z_1 \dots z_n| = \left|\frac{1}{n}\sum_{k=1}^{n} Q(w \, e^{i 2 \pi k/n})   \right|
\leq\frac{1}{n}\sum_{k=1}^{n} |Q(w \, e^{i 2 \pi k/n})|.
\]
Therefore there exists a $k$, for which $|Q(w \, e^{i 2 \pi k/n})|\geq 2$.
Also, if we take $z_j=e^{i 2 \pi j/n}$, then it is easy to see that the
estimate is sharp.
\end{proof}

We note that the quantity
\[
M_n(S^1)=\inf_{x_1, \dots x_n \in S^1} \sup_{x \in S^1} \|x-x_1\|\dots
\|x-x_n\|.
\]
is called the {\em $n^{\textrm{th}}$ Chebyshev constant} of the unit circle.
Also, the statement implies that the polarization constant of $\R^2$ is $2$.
The same result for $\C^2$ can be obtained by a similar approach \cite{anagn}.

For the planar case of the strong polarization problem, we do not know such a
simple proof as the one above. Still, a complex analytic proof can be achieved,
which will be presented in Section~\ref{planarcasesec}. For convenience, we
establish the necessary complex analytic tools in the present chapter.

Some of the following results had been proved in the early $20^{\mathrm{th}}$
century in connection with the theory of orthogonal polynomials, and the others
are of a similar spirit as well. In definitions, we mostly follow the
manuscripts of Szeg\H o \cite{szego} and  P\'olya and Szeg\H o \cite{polyaszego}.

A {\em complex polynomial} is a polynomial with complex coefficients. The
quotient of two complex polynomials is called a {\em (complex) rational
function}. A {\em trigonometric polynomial of degree $n$ }  is a $2
\pi$-periodic function defined on the real line given by
\[ f(t)= a_0 + a_1 \cos t + b_1
\sin t + \dots + a_n \cos (n t) + b_n \sin(n t),
\]
where the coefficients are real numbers. We mention that sometimes the
coefficients are allowed to be arbitrary complex numbers, however, we do not
need this generality. Also, via the formula $z=e^{i t}$, a trigonometric
polynomial can be understood as a function defined on $T$.

Any trigonometric polynomial of degree $n$ can also be written in the form
\[
f(t)=\sum_{j=-n}^n \alpha_j e^{i n t} = \frac{p(z)}{z^n},
\]
where $z= e^{i t}$ and $p(z)$ is a polynomial of degree $2n$. In particular,
$f(t)$ cannot have more than $2n$ zeroes on the interval $[0,2 \pi ]$.
Moreover, since all the coefficients of $f(t)$ are real, in the above
representation $\alpha_j = \bar{\alpha}_{-j}$ holds for every $j$. This
property turns out to be of special importance in view of the following
definition \cite{szego}.

\begin{defi}\label{inverspol}
Let $g(z)= a_0 + a_1 z + \dots + a_n z^n$ be a complex polynomial. Its {\em
reciprocal polynomial of order $n$} is defined by
\[
g^*(z)=  \bar{a}_n + \bar{a}_{n-1}z + \dots +\bar{a}_0 z^n.
\]
\end{defi}
It is easy to see that $\overline{g^*(z)} = g(1/z)z^n$. Note that we do not
require $a_n \neq 0$, and hence if $g(z)$ has precise degree $n$, then its
reciprocal polynomials can be defined of any order at least $n$. However, if we
do not specify otherwise, the order will always be the precise degree of
$g(z)$.

For any non-zero complex number $z$, let $z^*$ denote its image under the
inversion with respect to complex unit circle $T$:
\[
z^* = \frac{1}{\bar z}.
\]

It is easy to see that if the non-zero roots of $g(z)$ are $\alpha_1, \dots ,
\alpha_k$, then the non-zero roots of $g^*(z)$ are $\alpha^*_1, \dots ,
\alpha^*_k$. Moreover, if $|z|=1$, then $\bar z = 1/z$, and therefore
\begin{equation}\label{inverst}
g^*(z)= z^n \overline{g(z)},
\end{equation}
consequently, $|g^*(z)|= |g(z)|$. Since $z^* = z$ for any $z \in T$, the roots
of $g(z)$ and $g^*(z)$ agree. Therefore we immediately obtain

\begin{lemma}\label{circlepol}
If all zeroes of the complex polynomial $g(z)$ have modulus 1, then
\[g^*(z) = \gamma g(z)\] for a complex constant $\gamma$ with $|\gamma|=1$.
\end{lemma}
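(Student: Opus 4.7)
The plan is to combine two facts already established just before the statement: namely that the non-zero roots of $g^*(z)$ are the images $\alpha_j^*$ of the non-zero roots $\alpha_j$ of $g(z)$, and that $|g^*(z)| = |g(z)|$ on the unit circle $T$. The hypothesis $|\alpha_j|=1$ will make $g$ and $g^*$ have identical root sets, and the identity on $T$ will then pin down the proportionality constant.

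First I would factor $g$ as $g(z) = a_n \prod_{j=1}^n (z - \alpha_j)$, where by assumption $|\alpha_j|=1$ for every $j$. Since $|\alpha_j|=1$ gives $\alpha_j^* = 1/\bar\alpha_j = \alpha_j$, the non-zero roots of $g^*$ coincide with those of $g$, and with the same multiplicities (a quick way to see the multiplicity is preserved is to apply the formula $\overline{g^*(z)} = z^n g(1/z)$ and note that if $(z-\alpha)^k$ divides $g(z)$ then $(1/z - \alpha)^k$, hence $(1 - \alpha z)^k$, divides $z^n g(1/z)$; since $|\alpha|=1$, the factor $(1-\alpha z)$ is a unit scalar multiple of $(z - \alpha)$). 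Because $\deg g^* \leq n = \deg g$ and all $n$ roots agree with multiplicity, $g^*(z)$ must be a scalar multiple of $g(z)$, say $g^*(z) = \gamma\, g(z)$ for some $\gamma \in \C$.

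It remains to check $|\gamma|=1$. The cleanest route is to pick any point $z_0 \in T$ at which $g(z_0) \neq 0$ (such a point exists since $g$ has only finitely many zeroes) and invoke~(\ref{inverst}), which gives $|g^*(z_0)| = |g(z_0)|$; from $g^*(z_0) = \gamma\, g(z_0)$ we then read off $|\gamma|=1$. Alternatively, one may compare leading coefficients: the leading coefficient of $g^*$ is $\bar a_0$ while that of $g$ is $a_n$, and $|a_0| = |a_n|\prod |\alpha_j| = |a_n|$, so $\gamma = \bar a_0/a_n$ has modulus $1$.

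No step here is really an obstacle; the only point requiring a little care is the multiplicity statement, which is why I would prefer to argue via the factorisation of $g$ (where multiplicities are explicit) rather than just quoting ``same roots.''
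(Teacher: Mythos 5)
Your proof is correct and follows essentially the same route as the paper, which likewise deduces the lemma from the two facts stated just before it: that the zero sets of $g$ and $g^*$ coincide when all zeroes lie on $T$ (because $z^*=z$ there), and that $|g^*(z)|=|g(z)|$ on $T$, which forces $|\gamma|=1$. The only blemish is in your parenthetical multiplicity remark: since $1-\alpha z=-\alpha\,(z-\bar\alpha)$ when $|\alpha|=1$, the factor $(1-\alpha z)$ is a unit multiple of $(z-\bar\alpha)$ rather than of $(z-\alpha)$; this conjugation is compensated by the conjugation hidden in $\overline{g^*(z)}$ (or avoided altogether by reading the multiplicities directly off your factorisation $g(z)=a_n\prod_j(z-\alpha_j)$, which gives $g^*(z)=\bar a_n(-1)^n\bigl(\prod_j\bar\alpha_j\bigr)\prod_j(z-\alpha_j)$), so the conclusion stands.
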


Now, if $f(t)$ is a trigonometric polynomial of degree $n$, then  $f$ can be
written as
\[
f(t)=\frac{p(e^{i t })}{e^{i n t}}
\]
where $p(z)$ is a complex polynomial of degree $2n$ with $p(z)=p^*(z)$. It is
easy to see that this relation is, in fact, an equivalence (see
\cite{polyaszego}, Problem VI. 12). Equation (\ref{inverst}) now induces a
close connection between trigonometric polynomials and the real and imaginary
parts of arbitrary polynomials. Let $g(z)$ be a polynomial of degree $n$. If
$|z|=1$, then
\[
\Re g(z)= \frac 12 \left(g(z) + \overline{g(z)}\right) = \frac 12 \left(g(z) +
\frac{g^*(z)}{z^n}\right) = \frac{h(z)}{2 z^n},
\]
where $h(z)$ is a polynomial of degree $2n$ with $h(z)= h^*(z)$. Therefore the
real part of a polynomial of degree $n$ on the unit circle $T$ is  a
trigonometric polynomial of degree $n$. A similar argument yields that the
imaginary part can be represented in the same way.

It also follows that if $f(t)$ is a trigonometric polynomial of degree $n$,
then the set of zeroes of its holomorphic continuation $F(t)$ from $T$ to the
complex plane is invariant under the inversion  to $T$. Therefore, if
$\alpha_1, \dots, \alpha_m$ are the non-zero roots of $F(t)$ in the open unit
disc, then writing $g(z) = \prod (z- \alpha_j)$, $F(t)$ can be factorized as
\begin{equation}\label{postrig}
z^{-n} g(z) g^*(z) h(z),
\end{equation}
where $h(z)$  is a polynomial with zeroes only on $T$. Moreover, if $f(t)$ is
non-negative, then all the zeroes on $T$ are of even multiplicity, and
therefore $f(t)$ can be written as
\[
f(t)=|g(e^{it})|^2,
\]
where $g(z)$ is a polynomial of degree $n$. This is Fej\'er's representation
theorem, see Szeg\H o \cite{szego} 1.2.

The following observation is the converse of Lemma~\ref{circlepol}. It can be
found for example in the first edition of \cite{polyaszego}.
\begin{lemma}\label{unitzeros}
Suppose that the non-zero polynomial $g(z)$ has no zeroes in the open unit
disc. Then for any complex number $\gamma$ of modulus 1, all zeroes of $g(z) +
\gamma g^*(z)$ lie on the unit circle~$T$.
\end{lemma}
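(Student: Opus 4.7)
The strategy is to factor out the zeros of $g$ that already lie on $T$ and then apply the maximum modulus principle to the remaining Blaschke-type ratio. Write $g = g_1 g_2$, where $g_2$ collects the zeros of $g$ on $T$ (with multiplicity) and $g_1$ has all of its zeros in $\{z:|z|>1\}$. By Lemma~\ref{circlepol}, $g_2^* = c\,g_2$ for some $|c|=1$, so $g^* = c\,g_1^* g_2$ and
\[
f(z) := g(z) + \gamma\,g^*(z) = g_2(z)\bigl(g_1(z) + \gamma c\,g_1^*(z)\bigr).
\]
Since the zeros of $g_2$ are on $T$, the task reduces to showing that $f_1(z) := g_1(z) + \gamma c\,g_1^*(z)$ has all its zeros on $T$.

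Next, I would exploit a reflection symmetry. The operation $p \mapsto p^*$ is conjugate-linear and involutive, so a one-line computation gives $f_1^* = \overline{\gamma c}\,f_1$. Combined with~(\ref{inverst}), this tells us that the zero set of $f_1$ is invariant under the inversion $z \mapsto 1/\bar z$, so it is enough to prove that $f_1$ has no zeros in the open unit disc.

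For the latter, since $g_1$ is nowhere zero on the closed unit disc, the ratio $\phi(z) := g_1^*(z)/g_1(z)$ is holomorphic in a neighbourhood of $\{z:|z|\leq 1\}$. By~(\ref{inverst}) we have $|\phi|=1$ on $T$, while the zeros $1/\bar\alpha_j$ of $g_1^*$ all lie strictly inside the unit disc (unless $g_1$ is a nonzero constant, in which case $f_1$ is itself a nonzero constant and there is nothing to prove). Hence $\phi$ is non-constant and the maximum modulus principle yields $|\phi(z)|<1$ for $|z|<1$. But a hypothetical zero $z_0$ of $f_1$ with $|z_0|<1$ would force $\phi(z_0) = -1/(\gamma c)$, which has modulus one, a contradiction. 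The delicate step is precisely the factorisation $g = g_1 g_2$: without isolating the zeros of $g$ on $T$ the ratio $g^*/g$ is singular on $T$ and the maximum modulus argument cannot be applied directly — once those zeros are peeled off, everything reduces to a single application of the maximum principle to a Blaschke product.
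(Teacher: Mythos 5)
Your proof is correct, but it takes a genuinely different route from the paper's. The paper first reduces to $\gamma=1$ and to $g$ having no zeroes on $T$, and then argues topologically: since $g$ has no zeroes in the disc, the curve $\{g(z):z\in T\}$ has winding number $0$ about the origin, so by (\ref{inverst}) the unimodular function $g/g^*$ has winding number $-n$ on $T$, hence takes the value $-1$ at least $n$ times there; as $g+g^*$ has degree $n$, these account for all its zeroes. You instead peel off the zeroes on $T$ via Lemma~\ref{circlepol}, note that $f_1=g_1+\gamma c\,g_1^*$ satisfies $f_1^*=\overline{\gamma c}\,f_1$, so its zero set is inversion-invariant (this really follows from the identity $\overline{g^*(z)}=g(1/z)\,z^n$, valid for all $z\neq 0$, rather than from (\ref{inverst}), which is only the on-$T$ statement), and then exclude zeroes in the open disc by the maximum modulus principle applied to $g_1^*/g_1$. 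Your version makes explicit the reduction that the paper dismisses with ``we may assume $g$ has no zeroes on $T$,'' and it avoids any degree counting (no need to check that the leading coefficients of $g+\gamma g^*$ do not cancel), at the price of invoking the maximum principle instead of a purely topological count; both mechanisms are of course two faces of the same Blaschke-product phenomenon. One shared caveat: the sum $g+\gamma g^*$ can vanish identically (take $g(z)=z-1$, $\gamma=1$; in your notation $g_1$ is constant and $f_1$ is the \emph{zero} constant, not a nonzero one), in which case the statement is degenerate. The paper's proof silently excludes this case as well, so it is not a gap specific to your argument, but your parenthetical ``nonzero constant'' should be qualified: it can only fail when $g+\gamma g^*\equiv 0$, where there is nothing to prove anyway.
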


\begin{proof}
We may assume that $\gamma = 1$ and that $g(z)$ has no zeroes on $T$, therefore
$g(z)/g^*(z)$ maps the unit circle continuously onto itself. Since $g(z)$ has
no zeroes in the unit disc, the winding number of the curve $\{g(z): z \in T
\}$ with respect to the origin is $0$. By virtue of (\ref{inverst}), the
winding number of $g(z)/g^*(z)$ is $-n$. Therefore there are at least $n$
points on $T$, where $g(z)+g^*(z)=0$, and since it is a polynomial of degree
$n$, all of its zeroes have modulus 1.
\end{proof}

We will be interested in rational functions that possess an interesting
oscillation property. Bearing this in mind, we introduce the following concept.
The definition is slightly modified compared to that in \cite{glader}.

\begin{defi}\label{equioscill}
The real valued function $f$  on $T$ is {\em equioscillating of order $n$}, if
there are $2n$ points $w_1, w_2, \dots, w_{2n}$ on $T$ in this order, such that
\[
f(w_j) = (-1)^{j} \|f\|_T
\]
for every $j=1, \dots , 2n$, and $|f(z)|<\|f\|_T$ if $z \neq w_j$ for any $j$.
\end{defi}

Although equioscillation in general is not a very specific property (plainly,
any real valued function on $T$ whose level sets are finite has a shift which
is equioscillating of some order), equioscillation of a possible maximal order
is a strong condition. This becomes apparent in the context of rational
functions.

Suppose that $R(z)$ is a rational function, whose numerator is of degree $k$
and whose denominator has degree $l$; then the real and imaginary parts of
$R(z)$ are the quotients of two trigonometric polynomials of degrees $k$ and
$l$, and therefore $\Re (R(z))$ and $\Im( R(z))$ cannot be equioscillating of
order larger than $\max \{k,l \}$.  A characterization of those rational
functions whose real and imaginary parts are oscillating with this maximal
order was given by Glader and H\"ogn\"{a}s in 2000 \cite{glader}. In order to
formulate their result, we need the definition of Blaschke products.

\begin{defi}\label{blaschkedef}
A {\em finite Blaschke product of order $n$} is a rational function of the form
\begin{equation}\label{blaschke}
B(z)= \rho \,z^k \prod_{j=1}^{n-k} \frac{z- \alpha_j}{1- \bar{\alpha}_j z}\;,
\end{equation}
where $\rho,\alpha_1, \dots, \alpha_{n-k}$ are complex numbers with $|\rho|=1$
and $0<|\alpha_j|<1$.
\end{defi}

Clearly, the zeroes of the numerator and those of the denominator are images of
each other under the inversion with respect to $T$. Furthermore, $B(z)$ maps
the unit circle onto itself. Therefore, it can be written in the form
\begin{equation}\label{blaschkereflect}
B(z)= \gamma \frac{g(z)}{g^*(z)},
\end{equation}
where $|\gamma|=1$ and $g(z)$ is a polynomial of degree $n$. This is the
crucial property that we shall use later.

With Blaschke products in our arsenal, we can formulate the result about
maximally equioscillating rational functions.

\begin{theorem}[Glader, H\"ogn\"{a}s, \cite{glader}]\label{maxosc}
If $R(z)$ is a rational function with numerator and denominator degrees at most
$n$, and $\Re (R(z))$ and $\Im( R(z))$ are equioscillating functions on $T$ of
order $n$, then $R(z) = c \,B(z)$ or $ R(z) = c / B(z)$, where $c$ is a real
constant and $B(z)$ is a finite Blaschke product of order $n$.
\end{theorem}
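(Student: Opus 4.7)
The plan is to apply Fej\'er's representation theorem (established above) to convert the equioscillation hypotheses into polynomial factorizations, and then to exploit the compatibility between the factorizations coming from $\Re R$ and $\Im R$ to force $R$ into Blaschke form. Write $R = P/Q$ with $\deg P, \deg Q \leq n$ and (without loss of generality) $Q$ nonvanishing on $T$. On $T$, with order-$n$ reciprocals, $\Re R = (PQ^* + P^*Q)/(2QQ^*)$. The maximal-order equioscillation of $\Re R$ forces the self-reciprocal polynomials $2M_1 QQ^* \mp (PQ^* + P^*Q)$ of degree $2n$ to be non-negative on $T$ and to vanish at the $n$ extrema of $\Re R$, each with multiplicity $2$, exhausting all their zeros. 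Since all zeros are then on $T$, each is a perfect square, say $a^2$ and $b^2$, with $a, b$ self-reciprocal polynomials of degree $n$ with roots on $T$ at the respective extrema (cf.~Lemma~\ref{circlepol}). Summing,
\[
4M_1\,QQ^* \;=\; a^2 + b^2 \;=\; S\,S^*, \qquad S := a + ib,
\]
and an analogous application to $\Im R$ produces self-reciprocal polynomials $c, d$ of degree $n$ with $4M_2\,QQ^* = c^2 + d^2 = T\,T^*$ for $T := c + id$.

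Reading off $PQ^* + P^*Q = (b^2-a^2)/2$ and $-i(PQ^* - P^*Q) = (d^2-c^2)/2$ from the Fej\'er identities, one computes
\[
R \;=\; -M_1\,\Re\!\left(\frac{S}{S^*}\right) \;-\; i\,M_2\,\Re\!\left(\frac{T}{T^*}\right)
\]
on $T$, where $|S/S^*| = |T/T^*| = 1$ on $T$ by construction. The hypothesis that $\Re R = -M_1 \Re(S/S^*)$ equioscillates of order exactly $n$ forces both $S - S^*$ and $S + S^*$ (of degree $\leq n$, and with all zeros on $T$ by their anti-self-reciprocity, resp.\ self-reciprocity) to attain their maximal degree $n$, so that $\Re(S/S^*)$ hits $\pm 1$ at $n$ distinct points on $T$ each, alternating; the same holds for $T/T^*$. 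The aim is then to show that $T$ is a scalar multiple of $S$ or of $S^*$. Once this is established, substituting back gives $R = (\text{const}) \cdot S/S^*$ or $R = (\text{const}) \cdot S^*/S$, and since $S/S^*$ has modulus $1$ on $T$ and is a rational function of degree~$n$, it equals (up to a unimodular scalar) a Blaschke product of order~$n$ or its inverse, yielding $R = c B$ or $R = c/B$ as claimed. The reality of $c$ and the equality $M_1 = M_2$ emerge from the matching step.

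The hardest part will be the compatibility between $S$ and $T$. The constraints $SS^* = 4M_1 QQ^*$ and $TT^* = 4M_2 QQ^*$ alone admit many candidate pairs: up to unimodular factors, $S$ corresponds to a choice, for each root $q$ of $Q$, of whether $q$ or its reflection $1/\bar q$ is taken as a root of $S$, and $T$ is an independent such choice. It is the strict interlacing of the $2n$ extrema of $\Re R$ with the $2n$ extrema of $\Im R$ on $T$---a geometric consequence of both equioscillations being of maximum order $n$ for a degree-$n$ rational function---that must be decoded into the required algebraic identity $T \propto S$ or $T \propto S^*$. I would attempt to establish this by a Rouch\'e-type/argument-principle count applied to the four polynomials $S \pm S^*$ and $T \pm T^*$, tracking how their zero sets on $T$ must interlace under the combined constraints, and if necessary by a deformation argument that connects the generic configuration to an explicit model case (such as $R = z^n$, where $S, T$ and the matching can be written down directly). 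Once the compatibility is in hand, the remaining steps are routine verifications.
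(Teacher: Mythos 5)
You should first note that the paper contains no proof of Theorem~\ref{maxosc} to compare against: it is quoted from Glader and H\"ogn\"as, with the remark that their proof proceeds by combinatorial zero-counting for rational functions, and the paper only reproves the constructive Lemma~\ref{equifunc}. Your reduction itself is sound in spirit: writing $R=P/Q$ with $Q$ zero-free on $T$, the order-$n$ equioscillation of $\Re R$ and $\Im R$ does give, via the Fej\'er-type factorisation, $\Re R=-M_1\,\Re(S/S^*)$ and $\Im R=-M_2\,\Re(W/W^*)$ with $SS^*=4M_1QQ^*$ and $WW^*=4M_2QQ^*$ (I write $W$ for your second polynomial to avoid the clash with the circle $T$; also note $Q(z)Q^*(z)=z^n|Q(z)|^2$ on $T$, so the non-negativity and squaring must be phrased for the associated trigonometric polynomials, as in Section~\ref{complextoolssec} -- a fixable technicality).

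The genuine gap is that everything which makes the theorem true lies in the step you defer, and one step you declare routine is false as stated. A rational function of degree $n$ that is unimodular on $T$ is a quotient $g/g^*$ and in general a \emph{ratio} of Blaschke products with zeros on both sides of $T$ (e.g.\ $\frac{(z-\alpha)(1-\bar\beta z)}{(1-\bar\alpha z)(z-\beta)}$ with $|\alpha|,|\beta|<1$, $\alpha\neq\beta$); it is $\gamma B$ or $\gamma/B$ only if all zeros of $S$ lie in the open disc or all outside, and this must be extracted from the order-exactly-$n$ hypothesis by an interlacing/winding-number (Hermite--Biehler type) argument. Merely counting the points where $\Re(S/S^*)=\pm1$ cannot do it, since these are the zeros of $S\mp S^*$ and there are $n$ of each however the zeros of $S$ are distributed relative to $T$. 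Likewise the compatibility $W\propto S$ or $W\propto S^*$, the equality $M_1=M_2$, and the reality of $c$ do not ``emerge'' by substitution: even granting $W=\lambda S$ with all zeros of $S$ inside $T$, one has $R=-\bigl((M_1+iM_2\mu)S^2+(M_1+iM_2\bar\mu)S^{*2}\bigr)/(2SS^*)$ with $\mu=\lambda/\bar\lambda$, a degree-$2n$ expression, and one must invoke the hypothesis that $R$ has degree at most $n$ together with the coprimality of $S$ and $S^*$ to force one coefficient to vanish, which is what yields $\mu=\pm i$, $M_1=M_2$ and $R=\pm M_1 S/S^*$ or $\pm M_1 S^*/S$. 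Your proposal only names candidate tools (Rouch\'e, the argument principle, deformation to $R=z^n$) for exactly this zero-counting core -- which is the substance of the Glader--H\"ogn\"as proof -- without executing any of them, so as written it does not rule out constant multiples of mixed quotients of Blaschke factors and is not yet a proof.
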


This essentially means that Blaschke products are in some sense the complex
analogues of Chebyshev polynomials.

The proof of Theorem~\ref{maxosc} involves several combinatorial steps, most of
which boil down to counting zeroes of rational functions. By taking advantage
of the perspective of reciprocal polynomials, we give an alternative proof for
the main constructive lemma. This result will serve as the crux of our proof
for the planar case  of Theorem~\ref{strongpolar}.

\begin{lemma}\label{equifunc}
Suppose that $1 = w_1, w_2, \dots, w_{2n}$ are different points on $T$ in this
order. Let $w$ be a point on $T$ different from each $w_j$. Then there exists a
complex polynomial $h(z)$ of degree $n$, such that
\[
\frac{h(w_k)}{h^*(w_k)}=(-1)^{k+1}
\]
for each $k = 1 ,\dots, 2n$, and
\[
\frac{h(w)}{h^*(w)} = i.
\]
\end{lemma}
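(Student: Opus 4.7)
My plan is to translate the values of $h/h^*$ into zero conditions on two auxiliary polynomials associated with $h$, and then to verify that the resulting constraints can be satisfied simultaneously.

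First I would set $a(z) = h(z) - h^*(z)$ and $b(z) = h(z) + h^*(z)$, so that $h = (a+b)/2$ and $h^* = (b-a)/2$. On $T$, the condition $h(w_k)/h^*(w_k) = (-1)^{k+1}$ translates (since $(a+b)/(b-a)$ must equal $\pm 1$) into $a(w_k) = 0$ for odd $k$ and $b(w_k) = 0$ for even $k$. A short computation shows that $h(w)/h^*(w) = i$ is equivalent to $a(w)/b(w) = i$.

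Next, noting that $a^* = -a$ and $b^* = b$, and that each of $a,b$ has degree at most $n$ with $n$ prescribed zeros on $T$, I can write
\[
a(z) = C_1 \prod_{k \text{ odd}} (z - w_k), \qquad b(z) = C_2 \prod_{k \text{ even}} (z - w_k)
\]
for some constants $C_1, C_2 \in \C$. Comparing the leading and constant coefficients, the conditions $a^* = -a$ and $b^* = b$ impose prescribed values on the ratios $\bar C_1/C_1$ and $\bar C_2/C_2$; equivalently, each $C_j$ is constrained to lie on a specific real line through the origin in $\C$ (its argument is determined modulo $\pi$). This leaves exactly a $2$-parameter real family of candidates.

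The remaining requirement $a(w)/b(w) = i$ becomes $C_1/C_2 = i B/A$, where $A = \prod_{\text{odd}}(w - w_k)$ and $B = \prod_{\text{even}}(w - w_k)$. The crux of the proof is to verify that this target ratio is compatible with the two line constraints on $C_1, C_2$ — i.e.\ that the argument of $iB/A$ agrees, modulo $\pi$, with the difference of the prescribed arguments of $C_1$ and $C_2$. This is where I expect the main difficulty. The key tool is the elementary identity $(w - w_k)^2 = -\,w\,w_k\,|w - w_k|^2$, valid for $w, w_k \in T$. Multiplying over $k$ even versus $k$ odd and dividing, one finds that the phase discrepancy reduces to the positive real factor $\prod_{\text{even}}|w - w_k|^2/\prod_{\text{odd}}|w - w_k|^2$, so the compatibility holds.

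Once compatibility is established, I can pick $C_1, C_2$ satisfying all three constraints, set $h = (a+b)/2$, and directly check all conditions: at odd $w_k$ the ratio $h/h^*$ equals $b/b = 1$; at even $w_k$ it equals $a/(-a) = -1$; and at $w$ the choice of $C_1/C_2$ forces $h/h^* = i$. The only remaining subtlety is ensuring $h$ does not drop in degree (which happens only in the non-generic case $iB/A = -1$, and can either be handled by the freedom in the family or by a small perturbation), but the main substance of the argument is the phase compatibility identity above.
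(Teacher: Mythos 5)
Your proposal is correct and follows essentially the paper's own proof: your splitting into $a=h-h^*$ and $b=h+h^*$ with prescribed alternating zeros, the reciprocity conditions $a^*=-a$, $b^*=b$ pinning the arguments of $C_1,C_2$ modulo $\pi$, and the phase compatibility at $w$ via $(w-w_k)^2=-w\,w_k\,|w-w_k|^2$ are exactly the paper's $g_2,g_1$, its conditions (\ref{alphaeq})--(\ref{betaeq}), and the multiplicative form of its argument identity (\ref{arg}). The one loose end you flag --- that $h=(a+b)/2$ could drop below degree $n$ in the exceptional case $iB/A=-1$ --- is equally present in the paper's proof (there the real constant $c$ could satisfy $c\beta=-\alpha$), so it is not a gap relative to the paper.
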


\begin{proof}
Introduce the polynomials
\begin{eqnarray*}
g_1(z)&=& h(z)+h^*(z),\\
g_2(z)&=& h(z)-h^*(z).
\end{eqnarray*}
The original problem is equivalent to finding $g_1(z)$ and $g_2(z)$ with the
following properties:
\begin{itemize}
\item[(i)] The zeros of $g_1$ are $(w_{2k})$, where $1 \leq k \leq n$;
\item[(ii)] The zeros of $g_2$ are $(w_{2k-1})$, where $1 \leq k \leq n$;
\item[(iii)] $g_1(z) = g^*_1(z) $
\item[(iv)] $g_2(z) = - g^*_2(z) $
\item[(v)] $g_1(w) + i \, g_2(w)=0$.
\end{itemize}

\noindent In order to fulfill property (i), we search for $g_1(z)$ in the form
\begin{equation}\label{g11}
g_1(z) =  \alpha \prod_{k=1}^n{(z-w_{2k})},
\end{equation}
where $\alpha$ is a complex number of modulus 1. Lemma~\ref{circlepol} implies
that property (iii) is satisfied if the leading coefficient and the constant
term of $g_1(z)$ are conjugates of each other, that is,
\[
\bar \alpha = \alpha (-1)^n \prod w_{2k}.
\]
This is achieved by choosing $\alpha$ such that
\begin{equation}\label{alphaeq}
\alpha^2 =(-1)^n \prod \overline {w}_{2k}.
\end{equation}
\noindent

 Similarly, conditions (ii) and (iv) are fulfilled if $g_2(z)$ is defined
by
\[
g_2(z) = c \beta \prod_{k=1}^n{(z-w_{2k-1})},
\]
where $c$ is a non-zero real and $\beta$ is a complex number with $|\beta|=1$
satisfying
\begin{equation}\label{betaeq}
\beta^2 =(-1)^{n+1} \prod \overline {w}_{2k-1}.
\end{equation}

Using the fact that for any complex numbers $u \neq v$  of modulus 1,
\begin{equation}\label{arg}
\arg(u-v) \equiv \frac {\arg u + \arg v }{2} + \frac{\pi}{2}  \pmod \pi ,
\end{equation}
we obtain that if $\alpha$ and $\beta$ satisfy (\ref{alphaeq}) and
(\ref{betaeq}) respectively, then
\begin{equation}\label{argg1}
\arg g_1(z) \equiv  \frac {n \arg z}{2} \pmod \pi
\end{equation}
and
\begin{equation}\label{argg2}
\arg g_2(z) \equiv  \frac {n \arg z}{2} + \frac{\pi}{2} \pmod \pi
\end{equation}
 for any $z \in T$ (these also follow from (\ref{inverst}), since
 $\arg g(z) + \arg g^*(z) = n \arg z$). Thus
\[
\arg g_1(z) \equiv \arg (i\, g_2(z)) \pmod \pi
\]
on the unit circle. Since $w \in T$, $g_1(w) \neq 0 $ and $g_2(w) \neq 0$, $c$
can be chosen so that property (v) holds.
\end{proof}

\section{The planar case}\label{planarcasesec}
After preparing the complex analytic apparatus, the goal of this section is to
prove the $d=2$ case of Conjecture~\ref{strongpolar}. Referring to
(\ref{sinabs}), it can be stated in the complex setting as follows.

\begin{theorem}\label{planarpol}
For any set $z_1, \dots z_n$ of complex numbers of modulus 1, there exists a
complex number $z$ of norm 1, such that
\begin{equation}\label{planpolineq}
\sum \frac{1}{|z-z_j|^2} \leq \frac{n^2}{4}\;.
\end{equation}
\end{theorem}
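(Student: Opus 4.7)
I would tackle Theorem~\ref{planarpol} by an extremal argument that leverages Fej\'er's representation together with the equioscillation framework of Section~\ref{complextoolssec}. Set $F(z) := \sum_{j=1}^n |z-z_j|^{-2}$ and $m(z_1,\dots,z_n) := \min_{z \in T} F(z)$. By compactness of $T^n$, the supremum $M^* := \sup_{\vec z} m(\vec z)$ is attained at some extremal $(z_1^*,\dots,z_n^*)$, so the theorem reduces to the claim $M^* = n^2/4$. The lower bound is witnessed by the $n$-th roots of unity: evaluating at the midpoints and invoking the classical identity $\sum_{k=0}^{n-1} \csc^2\!\bigl(\frac{(2k+1)\pi}{2n}\bigr) = n^2$ yields $F = n^2/4$ there. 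The real work is in the matching upper bound.

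Write $g(z) = \prod_j(z - z_j^*)$. Then $F(z)\,|g(z)|^2 = \sum_k \prod_{j\neq k}|z-z_j^*|^2$ is a non-negative trigonometric polynomial of degree $n-1$, so Fej\'er's representation gives $F|g|^2 = |q|^2$ for a polynomial $q$ of degree $n-1$. A max-min perturbation argument -- using the rotation invariance of $F$ to confine the gradient vectors $\nabla_{\vec t}F(w)$ to an $(n-1)$-dimensional hyperplane, then invoking Carath\'eodory -- shows that at the extremal, $F - M^*$ vanishes to order at least two at $n$ distinct points $w_1,\dots,w_n \in T$ that interleave with the $z_j^*$'s (one minimum per arc between consecutive poles of $F$). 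A second application of Fej\'er gives $(F - M^*)|g|^2 = |r|^2$ with $r(z) = c\prod_i (z - w_i)$ of degree $n$. The polynomial identity $|q|^2 = M^*|g|^2 + |r|^2$ forces the $z^{\pm n}$ coefficients on the right-hand side to cancel (since $\deg q < n$), producing $|c|^2 = M^*$ and $\prod_i w_i = -\prod_j z_j^*$.

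To rigidify the extremal I would apply Lemma~\ref{equifunc} to the $2n$ interleaved points $w_1, z_1^*, w_2, z_2^*, \dots, w_n, z_n^*$, producing a polynomial $h$ of degree $n$ whose ratio $h/h^*$ alternates between $+1$ and $-1$ at these points. This forces the factorizations $h + h^* = \beta\, g(z)$ and $h - h^* = (\alpha/c)\, r(z)$, exhibiting $g$ and $r$ as the self-reciprocal and anti-self-reciprocal parts of a single polynomial $h$. The free choice of the extra point $w$ with $h(w)/h^*(w) = i$ in Lemma~\ref{equifunc} (which fixes the ratio $\alpha/\beta$) provides the additional constraint needed to encode the KKT condition $\sum_i \lambda_i\, \partial_{t_j} F(w_i) = 0$ coming from extremality in $\vec z$. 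Combined with the Glader--H\"ogn\"as characterization in Theorem~\ref{maxosc}, the only self-consistent solution is $h \propto z^n$ up to rotation of the variable, so that $g(z) = z^n - e^{i\phi}$ for some phase $\phi$. Thus the extremal is a rotated copy of the $n$-th roots of unity, and the first-paragraph computation gives $M^* = n^2/4$.

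The main obstacle is this last rigidification: converting the variational KKT information into rigidity of the reciprocal-polynomial pair $(h, h^*)$ produced by Lemma~\ref{equifunc}. The coefficient bookkeeping in $h + h^* = \beta g$ and $h - h^* = (\alpha/c) r$, together with the equioscillation structure of Theorem~\ref{maxosc}, must be combined so as to rule out any extremal configuration other than a rotated copy of $z^n - 1$. Once that rigidity is secured, the explicit evaluation at roots of unity closes the proof.
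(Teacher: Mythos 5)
Your setup runs parallel to the paper's: the reduction to an extremal configuration with all arc-minima equal is the content of Lemma~\ref{equimini}, your Fej\'er factorizations $F|g|^2=|q|^2$ and $(F-M^*)|g|^2=|r|^2$ amount to the paper's identity (\ref{rateq2}) written multiplicatively, and the cancellation of the frequency-$\pm n$ coefficients giving $|c|^2=M^*$ is the analogue of (\ref{leadco}). The genuine gap is the final ``rigidification'' step. You assert that Lemma~\ref{equifunc} together with the Glader--H\"ogn\"as characterization (Theorem~\ref{maxosc}) forces $h\propto z^n$, hence that the extremizer is a rotated copy of the $n$-th roots of unity. No argument is given, and the cited tools cannot deliver this: Theorem~\ref{maxosc} only says that a maximally equioscillating rational function is a real multiple of \emph{some} Blaschke product or its reciprocal; it puts no constraint on which one. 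Indeed, Proposition~\ref{unitzeroprop} and Figure~\ref{complex1} of the paper show that order-$n$ equioscillation between $0$ and $4/n^2$ is achieved by functions built from an \emph{arbitrary} polynomial $g_1$ with zeros on $T$ (e.g.\ the points (\ref{points}), which are not roots of unity), so equioscillation structure alone can never single out $z^n-e^{i\phi}$. Likewise, the remark that the free point $w$ in Lemma~\ref{equifunc} ``encodes the KKT condition'' is not an argument; in the paper the sole role of $w$ is to push the zero count of a degree-$4n$ rational function to $4n+1$, forcing an identity.

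What is missing is the use of the specific algebraic form of $F\cdot|g|^2$: by (\ref{abspol}) its numerator is $-z\sum_j z_j\prod_{k\neq j}(z-z_k)^2$, of degree at most $2n-1$, one less than the numerator of $|g|^2$-type terms. Exploiting this degree deficiency and evaluating the resulting polynomial identity at the points $z_j$ yields, in your notation, $|r(z_j)|=|g'(z_j)|$ for every $j$; the reciprocal-polynomial structure ($g=h+h^*$ self-reciprocal, $r\propto h-h^*$ anti-self-reciprocal, with the argument relations (\ref{argg1})--(\ref{argg2})) upgrades this to the polynomial identity $z\,g'(z)-\e\frac m2\,\frac{r(z)}{c}=\gamma\,g(z)$, and comparison of leading coefficients and constant terms gives $\gamma=n/2$ and then $m=n$, i.e.\ $M^*=n^2/4$ --- with no need to identify the extremizer at all (the paper characterizes the extremal sets only later, in Section~\ref{strongpolarsec}, by a different method). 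Without this chain, or some substitute for it, your proposal proves the lower bound $M^*\geq n^2/4$ but not the theorem.
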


First, we make use of the special structure of the function to be estimated.
For any sequence $(z_j)_1^n=\mathbf{z}\in T^n$, let
\begin{equation}\label{planarfunc}
G_{\mathbf{z}}(z)= \sum \frac{1}{|z-z_j|^2}
\end{equation}
be a function defined on $T$, and denote
\[
M(\mathbf{z}) = \min_{z \in T} G_\mathbf{z}(z).
\]
Our aim is to prove that
\begin{equation}\label{planarineq}
 M(\mathbf{z}) \leq n^2/4
\end{equation}
 for any
$\mathbf{z} \in T^n$.

Let $\mathcal{T}$ be the usual product topology on the space $T^n$. A sequence
$\mathbf{z}\in T^n$ is {\em locally extremal}, if there exists a neighbourhood
$\mathscr U$ of $\mathbf z$ in $\mathcal T$, such that for any $\mathbf {z'}\in
\mathscr U$,
\[
M(\mathbf{z}) \geq M(\mathbf{z'}).
\]
It clearly suffices to prove the inequality (\ref{planarineq}) for locally
extremal sets.

A real-valued function $g(z)$ defined on $T$ is called {\em convex}, if it is a
convex function of the argument of $z$. It is easy to see that $1/|z-z_j|^2$ is
convex on $T \setminus {z_j}$, and therefore $G_{\mathbf{z}}(z)$ is convex on
the arcs between the consecutive points of $\mathbf z$. Since $G_\mathbf{z}(z)$
has poles at each $z_j$, we obtain that it has exactly one local minimum on
each arc of $T$ between consecutive points of $\mathbf z$. (If two points of
$\mathbf z$ coincide, then the local minimum between them is defined to be
$\infty$.) For locally extremal sets, these minima follow a certain behaviour.
The information provided by the next lemma will make it possible to apply the
result of the previous section about equioscillating functions.

\begin{lemma}\label{equimini}
If $(z_j)_1^n=\mathbf z$ is a locally extremal set, then the local minima of
$G_{\mathbf{z}}(z)$ on the arcs of $T$ between consecutive points of $\mathbf
z$ are all equal.
\end{lemma}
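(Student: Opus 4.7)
The plan is to argue by contradiction. Suppose $\mathbf{z}$ is locally extremal but the local minima $m_j$ are not all equal, so that $S = \{j : m_j = M(\mathbf{z})\}$ is a proper subset of $\{1,\dots,n\}$. I will construct a small perturbation of $\mathbf{z}$ that strictly raises every active $m_j$ while leaving the non-active ones above $M$ by continuity, contradicting local extremality.

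Parametrize $z_k = e^{i\theta_k}$ (with the $\theta_k$ ordered cyclically), and write $\phi_j$ for the argument of the unique minimizer of $G_\mathbf{z}$ on the arc between $z_j$ and $z_{j+1}$. Since $\phi_j$ is an interior minimizer of the smooth function $G_\mathbf{z}$, the envelope theorem gives
\[
\frac{\partial m_j}{\partial \theta_k} \;=\; \frac{\partial}{\partial \theta_k}\!\left(\frac{1}{|e^{i\phi_j}-e^{i\theta_k}|^2}\right),
\]
the right-hand side evaluated with $\phi_j$ held fixed. By Gordan's theorem of the alternative, a direction $\delta \in \mathbb{R}^n$ with $\delta \cdot \nabla m_j > 0$ for every $j \in S$ exists unless there is a non-trivial tuple $(\lambda_j)_{j \in S}$ with $\lambda_j \geq 0$ and $\sum_{j \in S} \lambda_j \nabla m_j = 0$. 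Given such a $\delta$, the perturbation $\theta \mapsto \theta + \varepsilon \delta$ with $\varepsilon > 0$ small raises every active $m_j$ by $\Theta(\varepsilon)$, while the non-active $m_j$ change by only $O(\varepsilon)$ and stay strictly above $M$; this yields the desired contradiction.

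The crux is therefore to rule out non-trivial non-negative $\lambda$ supported on the proper subset $S$. Here the idea is to pass to the dual function
\[
H(\theta) \;=\; \sum_j \frac{\lambda_j}{|e^{i\phi_j}-e^{i\theta}|^2},
\]
for which the condition $\sum_j \lambda_j \nabla m_j = 0$ is precisely the statement that each $\theta_k$, $k=1,\dots,n$, is a critical point of $H$. Now reuse the convexity observation already recorded in the text: each summand $1/|e^{i\phi_j}-e^{i\theta}|^2$ is strictly convex in $\theta$ on $T\setminus \{e^{i\phi_j}\}$. Hence $H$ itself is strictly convex on every arc between two consecutive poles $e^{i\phi_j}$ (with $\lambda_j > 0$), and so has exactly one critical point -- a local minimum -- on each such arc. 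If the support of $\lambda$ has size $t$, then $H$ has precisely $t$ critical points in total, so the requirement that the $n$ distinct $\theta_k$'s all be critical points of $H$ forces $t \geq n$; that is, $\lambda$ has full support, contradicting $\operatorname{supp}\lambda \subseteq S$.

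I expect the main difficulty to be spotting the dual function $H$ together with the convexity mechanism that transfers the same observation just used for $G_\mathbf{z}$ over to the Lagrange multipliers; once this is in hand, the remaining ingredients (envelope theorem, Gordan's theorem, and the small-$\varepsilon$ continuity step that finishes the contradiction) are routine.
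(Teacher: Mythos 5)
Your argument is sound in its main line, but it takes a genuinely different route from the paper's. The paper proves the lemma by an explicit perturbation: it picks one arc whose local minimum exceeds $M(\mathbf{z})$ and moves its two endpoints outward; the convexity and symmetry of $1/|z-z_j|^2$ show that the new $G$ is pointwise larger on the complementary arc, where the global minimum is still attained for small $\varepsilon$, so $M$ strictly increases. In effect the paper writes down an improving direction by hand and never needs multipliers. You instead get the improving direction non-constructively (envelope theorem plus Gordan's alternative) and must then exclude the alternative; your key observation --- that by the symmetry of the kernel the multiplier identity $\sum_{j}\lambda_j\nabla m_j=0$ says exactly that every $\theta_k$ is a critical point of the dual potential $H$, together with the count that a potential with $t$ poles has exactly $t$ critical points (one strict minimum per arc between consecutive poles, by the same strict convexity) --- is correct and is the real content of your proof. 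What each buys: the paper's argument is shorter, completely elementary, and (as the paper remarks) works verbatim for any convex, symmetric, one-pole kernel; yours is heavier but yields a Fritz John--type stationarity statement (at a local maximum the active gradients are positively dependent) and a pole-versus-critical-point count that foreshadows the equioscillation structure exploited later in Section~\ref{planarcasesec}.

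One caveat: your final step invokes ``the $n$ distinct $\theta_k$'s,'' but distinctness is not part of the hypothesis, and with the paper's convention (a coinciding pair contributes a local minimum equal to $\infty$) the lemma is exactly what is later used to conclude that locally extremal sets consist of $n$ distinct points. Your scheme stalls in the case where all nondegenerate arc minima are equal to $M(\mathbf{z})$ but some points coincide: then $S$ is all $n'$ genuine arcs, the counting yields no contradiction, and Gordan's alternative cannot be excluded. This case needs a separate (easy) perturbation: splitting a repeated point into two nearby points strictly increases $G$ pointwise away from the split, by strict convexity of the kernel, hence strictly increases $M$; alternatively, the paper's spreading argument applied to the degenerate arc does the job. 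With that one-line patch your proof is complete.
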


\begin{proof}
Suppose on the contrary that $z_1 = e^{i t_1}$ and $z_2 = e^{i t_2}$ are two
consecutive points such that the local minimum of $G_{\mathbf z }(z)$ on the
arc $\wideparen{z_1 z_2}$ is strictly larger than $M(\mathbf{z})$; this also
implies that $z_1 \neq z_2$.  We can assume that $0 \leq t_1 \leq t_2 < 2\pi$
and that $\wideparen{z_1 z_2}$ is the set of points of $T$ with argument
between $t_1$ and $t_2$. Let $\e$ be a small positive number, and consider the
new set of points $\mathbf{z}'$ obtained from $\mathbf z$ by exchanging $z_1$
and $z_2$ for
\[
z_1' = e^{i (t_1-\e)} \; , \; z_2' = e^{i (t_2+\e)}.
\]
Let us compare the values of $G_{\mathbf z'}(z)$ to those of $G_{\mathbf
z}(z)$. First, suppose that $z\in \wideparen{z_1 z_2}$, where $z = e^{i t}$. By
symmetry, it suffices to consider the case $t_1 \leq t \leq (t_1 + t_2)/2$.
Then,
\[
\frac{1}{|z-z_1|^2}> \frac{1}{|z-z_1'|^2}\;,
\]
and furthermore, by convexity and symmetry,
\[
\left|\frac{1}{|z-z_1|^2}- \frac{1}{|z-z_1'|^2} \right| >
\left|\frac{1}{|z-z_2|^2}- \frac{1}{|z-z_2'|^2} \right|\;.
\]
Thus
\[
\frac{1}{|z-z_1|^2}+ \frac{1}{|z-z_2|^2} > \frac{1}{|z-z_1'|^2}+
\frac{1}{|z-z_2 '|^2}
\]
and hence
\[
G_{\mathbf z}(z)>G_{\mathbf z'}(z).
\]
Interchanging the roles of $z_j$ and $z_j'$ ($j=1,2$) yields that if $z\in
\wideparen{z_2' z_1'}$, then
\[
G_{\mathbf z}(z)<G_{\mathbf z'}(z).
\]
If $\e$ is sufficiently small, then the minimum of $G_{\mathbf z}(z)$ is
attained on the arc $\wideparen{z_2' z_1'}$, while the local minimum of
$G_{\mathbf z'}(z)$ on $\wideparen{z_1' z_2'}$ is still larger than the minimum
on $\wideparen{z_2' z_1'}$. Therefore
\[
M(\mathbf{z}) < M(\mathbf{z'}),
\]
which contradicts  the extremality of $(z_j)_1^n$.
\end{proof}

We note that Lemma~\ref{equimini} remains valid for any function instead of
$G_{\mathbf{z}}(z)$ that is obtained by taking the sum of translated copies of
a convex, axis-symmetric function on $T$ with one pole.

\begin{proof}[Proof of Theorem~\ref{planarpol}]\label{planarproof}
We may assume that $\mathbf{z} = (z_j)_1^n$ is a locally extremal set, and
therefore it necessarily consists of $n$ different points. Setting
\[ m = 2 \sqrt{M(\mathbf z)},
\]
the inequality (\ref{planarineq}), that we wish to prove,  is equivalent to the
statement $m \leq n$.

 For any $z$ and $z_j$ on $T$,
\begin{equation}\label{abspol}
|z - z_j|^2 = (z - z_j) \overline{(z-z_j)} = (z-z_j)\left(\frac 1z -\frac
1{z_j}\right) = - \frac{ (z- z_j)^2}{z \, z_j}.
\end{equation}
Thus, defining the rational function
\begin{equation}\label{rationalform}
R(z)  = \frac{\prod_{j=1}^{n}(z-z_j)^2}{-z\sum_{j=1}^{n} z_j \prod_{k \neq j}
(z -z_k)^2},
\end{equation}
we obtain by (\ref{planarfunc})  that $R(z) = 1/G_{\mathbf{z}}(z)$ for every
$z$ on $T$.

The degrees of the numerator and the denominator of $R(z)$ are $2n$ and at most
$2n - 1$, respectively. The zeroes are $(z_j)_1^n$ with multiplicity 2, and
$R(z)$ assigns real values on the unit circle. Moreover, Lemma~\ref{equimini}
implies that the function
\[
R(z)-\frac{2}{m^2},
\]
which is a rational function as well, oscillates equally between $-2/m^2$ and
$2/m^2$ of order $n$. Let $w_1, \dots, w_{2n}$ be the equioscillation points
such that $w_{2k}= z_k$ for every $k= 1, \dots, n$, and let $w$ be a further
point on $T$ satisfying $R(w) = 2/m^2$. Applying Lemma~\ref{equifunc} yields a
polynomial $h(z)$ of degree $n$, such that
\begin{equation}\label{rateq}
R(z)-\frac{2}{m^2} = \frac{2}{m^2}\, \Re\left(\frac{h(z)}{h^*(z)}\right)
\end{equation}
for every $z= w_1, \dots, w_{2n}, w$. Moreover, both functions assign real
values on $T$, and they have local extrema at the points $(w_j)_1^{2n}$,
therefore their derivatives vanish at these places.

Since $|h(z)|=|h^*(z)|$ on the unit circle,
\[
\frac{2}{m^2}+\frac{2}{m^2}\, \Re\left(\frac{h(z)}{h^*(z)}\right)=
\frac{1}{m^2}\left(2+\frac{h(z)}{h^*(z)}+\frac{h^*(z)}{h(z)}\right)=
\frac{(h(z)+h^*(z))^2}{m^2 \, h(z) h^*(z)}.
\]
Thus, from (\ref{rateq}) we deduce that the rational function
\[
R(z)-\frac{(h(z)+h^*(z))^2}{m^2 \, h(z) h^*(z)}
\]
has double zeroes at all the points $w_1, \dots, w_{2n}$, and it also vanishes
at $w$. On the other hand, its numerator is of degree at most $4n$. Hence, it
must be constantly 0, and using~(\ref{rationalform}), we obtain that
\begin{equation}\label{rateq2}
\frac{\prod_{j=1}^{n}(z-z_j)^2}{-z\sum_{j=1}^{n} z_j \prod_{k \neq j} (z
-z_k)^2}=\frac{(h(z)+h^*(z))^2}{m^2 \, h(z) h^*(z)}.
\end{equation}
In the rest of the proof, we investigate this equation; however, there is still
a fairly long way to go.

As in the proof of Lemma~\ref{equifunc}, we introduce the functions $g_1(z)=
h(z)+h^*(z)$ and $g_2(z)= h(z)-h^*(z)$. Then by (\ref{g11}) and
(\ref{alphaeq}),
\begin{equation*}
g_1(z) =  \alpha \prod_{j=1}^n{(z-z_{j})},
\end{equation*}
where $\alpha$ is a complex number of norm 1 satisfying
\begin{equation}\label{alphaeq2}
\alpha^2 =(-1)^n \prod \overline {z}_j\,.
\end{equation}
According to properties (iii) and (iv) of the proof of Lemma~\ref{equifunc}, $
g_1(z) = g^*_1(z)$ and $g_2(z) = - g^*_2(z)$,  hence they have the form
\begin{equation}\label{g1g2}
\begin{aligned}
g_1(z)&= \alpha z^n + \dots + \bar \alpha, \\
g_2(z)&= \beta  z^n + \dots - \bar \beta.
\end{aligned}
\end{equation}
Substituting $g_1(z)$ and $g_2(z)$, equation (\ref{rateq2}) transforms to
\begin{equation}\label{rateq3}
\frac{\prod_{j=1}^{n}(z-z_j)^2}{-z\sum_{j=1}^{n} z_j \prod_{k \neq j} (z
-z_k)^2} = \frac{g_1(z)^2}{\frac{m^2}{4} (g_1(z)^2-g_2(z)^2)}\,.
\end{equation}
Since the degree of the denominator on the left hand side is at most $2n-1$,
from (\ref{g1g2}) we deduce that
\begin{equation}\label{leadco}
\alpha = \pm \beta.
\end{equation}
The quotient of the leading coefficients of the numerators on the two sides of
(\ref{rateq3}), which is $\alpha^2$, is the same as the quotient of those of
the denominators. Therefore
\begin{equation*}\label{rateq4}
-\alpha^2 z\sum_{j=1}^{n} z_j \prod_{k \neq j} (z -z_k)^2 = \frac{m^2}{4}
(g_1(z)^2-g_2(z)^2).
\end{equation*}
Substituting $z = z_j$ and taking square roots yields
\[
\alpha \, z_j \prod_{k \neq j} (z_j -z_k) = \pm \frac{m}{2} \, g_2(z_j).
\]
Observe that this is equivalent to
\begin{equation}\label{dereq1}
z_j \, g_1'(z_j)=  \e_j \frac{m}{2} \, g_2(z_j),
\end{equation}
where $\e_j = \pm 1$.

Next, we show that $\e_j = \e_k$ for any $j$ and $k$. First, for any $j$,
\begin{align*}
\arg (g_1'(z_j)) &= \lim_{\delta \rightarrow 0+} \left(\arg \left(g_1(z_j e^{i
\delta})\right) - \arg (z_j e^{i \delta} - z_j)\right)\\ &= \lim_{\delta
\rightarrow 0+} \arg \left(g_1(z_j e^{i \delta})\right) - \arg z_j -
\frac{\pi}{2}
\end{align*}
and therefore
\begin{equation}\label{argeq1}
\arg (z_j g_1'(z_j)) = \lim_{\delta \rightarrow 0+} \arg \left(g_1(z_j e^{i
\delta})\right) - \frac{\pi}{2}\,.
\end{equation}
Second, from (\ref{argg1}) and (\ref{argg2}) it follows that
\[
\arg \frac {g_1(z)}{g_2(z)}  \equiv  \frac {\pi}{2} \pmod \pi
\]
on the unit circle. Since $g_1(z)$ and $g_2(z)$ are polynomials with single
zeroes only, their arguments change continuously on $T$ apart from their
zeroes, where a jump of $\pi$ occurs. It is easy to see that the zeroes of
$g_2(z)$ are the local minimum places of $G_{\mathbf z}(z)$, and therefore the
zeroes of $g_1(z)$ and $g_2(z)$ are alternating on $T$. This implies that
\[
\lim_{\delta \rightarrow 0+} \arg \frac{g_1(z_j e^{i \delta})}{g_2(z_j e^{i
\delta})}
\]
is the same for every $j$ modulo $2 \pi$. Now (\ref{argeq1}) yields that
\[ \arg \frac{z_j g_1'(z_j)}{g_2(z_j)}\]
is the same modulo $2 \pi$ for every $j$. A quick look at (\ref{dereq1})
reveals that, indeed, $\e_j$ is constant for all $j$. Let this constant be $\e
= \pm 1$.

From (\ref{dereq1}), we conclude that  the polynomial
\[
z \, g_1'(z) - \e \frac{m}{2} \, g_2(z)
\]
of degree $n$ attains 0 at all $(z_j)_1^n$, and hence its zeroes agree with
those of $g_1(z)$. Therefore there exists a complex number $\gamma$, such that
\[
z \, g_1'(z) - \e \frac{m}{2} \, g_2(z)  = \gamma \, g_1(z),
\]
and thus
\begin{equation}\label{rateq5}
 \e \frac{m}{2} \, g_2(z)  = z \, g_1'(z) - \gamma \, g_1(z).
\end{equation}
Equating the leading coefficients, referring to (\ref{g1g2}), gives
\begin{equation}\label{mgamma}
\e \frac{m}{2} \, \beta = (n - \gamma) \alpha,
\end{equation}
which, with the aid of (\ref{leadco}), yields that $\gamma \in \R$.

Finally, by comparing the leading coefficients and the constant terms in
(\ref{rateq5}) and using the form (\ref{g1g2}), we deduce that
$
(n- \gamma)\alpha = \gamma \alpha
$
and, since $\alpha \neq 0$,
\[
\gamma = \frac{n}{2}.
\]
Taking absolute values in (\ref{mgamma}) and bearing (\ref{leadco}) in mind, we
obtain that $m = n$, which is even stronger than the desired inequality in the
sense, that it shows that every locally extremal set is an extremal set.
\end{proof}

\section{Remarks about the complex proof}\label{remarkscomplexsec}

The proof of Theorem~\ref{planarpol} in the previous section does not give a
characterization of the extremal cases. However, in
Section~\ref{strongpolarsec}, by a different method, we shall prove that the
inequality (\ref{planpolineq}) is sharp only if there exists a complex number
$\rho$ of modulus 1 such that the set $\{ z_j/\rho \}$ equals to the set of
unity roots of order $n$. From this, via (\ref{rateq3}) and (\ref{argg1}), we
obtain that
\[
g_1(z) =  i \rho^{n/2}\left( \left(\frac{z}{\rho}\right)^n - 1 \right).
\]
Hence, by (\ref{rateq5}),
\[
g_2(z) =  i \rho^{n/2}\left( \left(\frac{z}{\rho}\right)^n + 1 \right).
\]
Therefore, again by (\ref{rateq3}) and by (\ref{abspol}), it follows that for
any $z \in T$,
\begin{equation*}\label{planarextr}
\sum \frac{1}{|z-z_j|^2}=\frac{1}{R(z)} = \frac{- n^2 \rho^n z^n}{(z^n -
\rho^n)^2}=\frac{n^2}{|z^n-\rho^n|^2}\;.
\end{equation*}
Setting $\rho =1$ and translating the result back into the real setting yields
the formula
\begin{equation}\label{cosform}
 \sum_{j=1}^n \left(\sin^2 \left(\frac{t}{2}-\frac{ j \pi}{n} \right)\right)^{-1}
  = \frac{2 n^2}{1-\cos n t}\;,
\end{equation}
that can also be obtained by comparing the values of the left hand side and its
derivative to those of the  Chebyshev polynomial at the points $t = j \pi /n$.
The special case of $t= \pi/n$  can be also be  derived from the  Riesz
Interpolation Formula, that was proved  in 1914 by Marcel Riesz \cite{riesz14}.
This states that for any trigonometric polynomial $f(t)$ of degree $n$, where
$n$ is even,
\[
f'(t)= \frac{1}{n}\sum_{j=1}^{n} (-1)^{j+1} \lambda_j f(t + t_j)
\]
with
\[
\lambda_j =\left(2 \sin^2 \left(\frac{t_j}{2} \right)\right)^{-1}\quad \textrm{
and } \quad t_j = \frac{(2j-1) \pi  }{n} \;.
\]
Setting $f(t)=\sin(n t/2)$ and $t=0$, we arrive at the desired case of
(\ref{cosform}):
\[
\sum_{j=1}^{n} \left(\sin^2\left( \frac{j \pi}{n} - \frac{\pi}{2n}\right)
\right)^{-1} = n^2.
\]

Our next remark concerns formula (\ref{rateq5}). The first question that comes
to one's mind is probably the following:
\bigskip

\noindent {\em For which polynomials $g(z)$ of degree $n$ does the polynomial
\begin{equation}\label{gderpol}
h(z) = z \, g'(z) - \frac{n}{2} \, g(z)
\end{equation}
 have zeroes only on the unit circle?}

\bigskip
At first glance, the question seems to be connected to Bernstein's inequality
about the derivatives of polynomials, the most relevant version of which reads
as follows: If $p(z)$ is a polynomial of degree $n$, then
\begin{equation*}
\| p'\|_T \leq n \| p\|_T.
\end{equation*}
Although this inequality is sharp (as shown by $p(z)=z^n$), under certain
constraints on the location of the zeroes of $p(z)$, a stronger statement
holds.   Erd\H os conjectured and  Lax proved (see \cite{lax}, and the article of
Erd\'elyi \cite{erdelyi}) that if $p$ has no zeroes in the open unit disc, then
\[
\|p'\|_D \leq \frac{n}{2} \|p\|_D,
\]
where $D$ is the closed unit disc. The factor $n/2$ is familiar from
(\ref{gderpol}). However, since the statement is about the maximum norms, it
cannot exclude the possibility of the existence of a zero of $h(z)$ inside the
unit circle.

In the present situation, we know that the zeroes of $g(z)$ all lie on $T$.
Quite surprisingly, this condition turns out to be sufficient.

\begin{prop}\label{unitzeroprop}
If $g(z)$ is a polynomial of degree $n$ with zeroes $z_1, \dots, z_n$ on the
unit circle, then all zeroes of $h(z) = z \, g'(z) - n/2 \, g(z)$ lie on the
unit circle as well.
\end{prop}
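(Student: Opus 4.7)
The plan is to show that the rational function $\phi(z) := h(z)/g(z)$ takes purely imaginary values on $T$ (away from its poles), then count the zeros of $h$ on each open arc between consecutive zeros of $g$, and finally match the count against the degree of $h$. I will first reduce to the case that the $z_j$ are distinct by a continuity/perturbation argument at the end; the locus of polynomials all of whose roots lie on $T$ is closed and the simple-root case is dense, so proving the proposition for simple roots suffices.

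Assuming $g(z)=c\prod_{j=1}^n(z-z_j)$ with distinct $z_j\in T$, the logarithmic derivative gives
\[
\phi(z)\;=\;\frac{zg'(z)}{g(z)}-\frac{n}{2}\;=\;\sum_{j=1}^n\frac{z}{z-z_j}-\frac{n}{2}.
\]
For $z,z_j\in T$ with $z\ne z_j$, a short computation using $|z-z_j|^2=2-2\mathrm{Re}(z\bar z_j)$ and $z\bar z_j+1=2\mathrm{Re}(z\bar z_j)\cdot\text{(something)}$ yields $\mathrm{Re}\bigl(z/(z-z_j)\bigr)=\tfrac12$. Summing over $j$ shows that $\mathrm{Re}\,\phi(z)\equiv 0$ on $T\setminus\{z_1,\ldots,z_n\}$; in other words, $h(z)/g(z)$ is purely imaginary on the unit circle.

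To count zeros, parametrize $z=e^{i\theta}$ and write $z_j=e^{i\theta_j}$. The identity $\mathrm{Im}\bigl(1/(1-e^{i\varphi})\bigr)=\tfrac12\cot(\varphi/2)$ allows me to express the imaginary part of $\phi$ explicitly as
\[
-i\,\phi(e^{i\theta})\;=\;\frac{1}{2}\sum_{j=1}^n\cot\!\left(\frac{\theta-\theta_j}{2}\right)\;=:\;\psi(\theta).
\]
On each of the $n$ open arcs of $T$ between consecutive $\theta_j$'s, $\psi$ is continuous and strictly real; it blows up to $+\infty$ at the left endpoint (where one cotangent term dominates) and to $-\infty$ at the right endpoint. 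By the Intermediate Value Theorem, each arc contains at least one zero of $\psi$, hence of $h$ (since $g$ has no zeros on the arc interiors), giving at least $n$ zeros of $h$ on $T$.

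Comparing leading coefficients in $h(z)=zg'(z)-(n/2)g(z)$ shows $\deg h=n$ exactly, so $h$ has at most $n$ roots in $\mathbb{C}$. Combined with the lower bound of $n$ from the IVT argument, all roots of $h$ lie on $T$, one simple root per arc. The main technical obstacle is handling polynomials $g$ with multiple roots on $T$: then some arcs collapse, the cotangent sum acquires higher-order poles, and one must show that the forced zeros of $h$ at the multiple points of $g$ (of order $k_j-1$, as is readily checked from the Leibniz rule) are correctly reconciled with the shrunken IVT count. Rather than bookkeeping this directly, I would invoke the density of simple-root polynomials and Hurwitz's theorem: since the property ``all zeros on $T$'' is preserved under uniform limits of polynomials of bounded degree, the general case reduces to the simple-root case already settled.
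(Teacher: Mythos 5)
Your proof is correct, but it takes a genuinely different route from the paper's. The paper follows the P\'olya--Lax idea: since all zeros of $g$ lie on $T$, one has $\arg g(z) \equiv \frac{n}{2}\arg z + \varphi \pmod \pi$ there, so $p(z) = e^{-i\varphi} g(z)/z^{n/2}$ is real-valued on the circle; at each local maximum $w$ of $|g|$ on $T$ one gets $p'(w)=0$ and hence $h(w)=0$, and the $n$ local maxima account for all zeros of the degree-$n$ polynomial $h$. You instead work with the differentiated form of the same fact: $\mathrm{Re}\bigl(z g'(z)/g(z)\bigr) = n/2$ on $T$, so $h/g$ is purely imaginary there; you then write its imaginary part as an explicit cotangent sum, extract one zero per arc between consecutive zeros of $g$ by the intermediate value theorem, and close with the degree count $\deg h = n$. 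The two arguments locate the same points (on $T$, away from the zeros of $g$, the zeros of $\mathrm{Im}(zg'/g)$ are exactly the critical points of $|g|$), but yours is more explicit, and -- importantly -- you address the multiple-root case head-on via density of simple-root configurations and Hurwitz's theorem, a point the paper's proof silently glosses over (with a $k$-fold zero of $g$ there are fewer than $n$ arcs, and one must also account for the zeros of order $k-1$ that $h$ inherits at the multiple points). Two small blemishes, neither affecting correctness: the intermediate identity you quote while computing $\mathrm{Re}\bigl(z/(z-z_j)\bigr)$ is garbled, though the conclusion $\mathrm{Re}\bigl(z/(z-z_j)\bigr)=\frac{1}{2}$ is right and easy to verify directly; and your cotangent formula carries the wrong sign, since $\mathrm{Im}\bigl(z/(z-z_j)\bigr)=\frac{1}{2}\cot\bigl(\frac{\theta_j-\theta}{2}\bigr)$, which merely swaps the roles of $+\infty$ and $-\infty$ at the arc endpoints and leaves the IVT step intact.
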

\begin{proof}
The following argument is based on the idea of P\'olya and Lax \cite{lax}. Let
\[
g(z) = c \prod_{j=1}^n (z-z_j),
\]
where $c \in \C$ and $|z_j|=1$ for all $j$. Let $w_1, \dots, w_n$ be points on
$T$ where $|g(z)|$ has local maxima on the unit circle. We shall show that the
polynomial $h(z)$ attains 0 at every point $w_j$.

By (\ref{arg}),
\[
\arg g(z)  \equiv \frac{n}{2} \arg z + \varphi \pmod \pi
\]
 with the constant
\[
\varphi = \arg c + \sum \frac{\arg z_j}{2} + \frac{n \pi}{2}.
\]
Therefore the function
\[
p(z) = e^{- i \varphi} \frac{g(z)}{z^{n/2}}
\]
is real for every $z \in T$. Moreover, $|p(z)| = |c|\, |g(z)|$ on the unit
circle, and hence $p'(w_j)=0$ for every $j$. Thus,
\begin{equation*}
h(w_j) = w_j \, g'(w_j) - \frac{n}{2} \, g(w_j) =e^{i \varphi}\frac{n}{2}
w_j^{n/2} p(w_j)  - \frac{n}{2}e^{i \varphi} w_j^{n/2} p(w_j)=0,
\end{equation*}
and hence the zeroes of $h(z)$ are $w_1, \dots, w_n$.
\end{proof}

Proposition~\ref{unitzeroprop} implies that starting from any polynomial
$g_1(z)$ of degree $n$ with zeroes on the unit circle, the rational function
$R(z)$ defined via  (\ref{rateq5}), (\ref{rateq3}) and (\ref{rationalform}) is
oscillating between $0$ and $4/n^2$ of order $n$ on the unit circle. In
Figure~\ref{complex1}, we illustrate $1/G_{\mathbf z}(z)$ and the
equioscillating $R(z)$ derived from $g_1(z)$, with the choice of
\begin{equation}\label{points}
z_1=1, \ z_2=e^{i 2\pi/5},\ z_3=i,\ z_4 = e^{i 3 \pi/4},\ z_5 = e^{i 5 \pi/4}.
\end{equation}

\begin{figure}[h]
\epsfxsize = 9.5cm \centerline{\epsffile{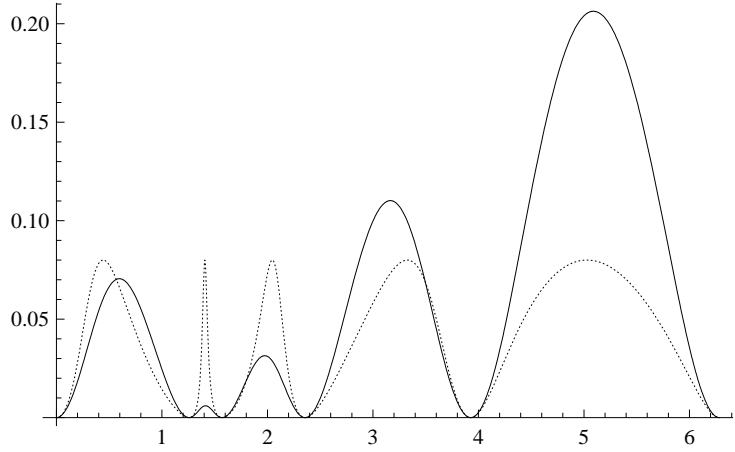}}
  \caption{$1/G_{\mathbf z}(z)$ and its equioscillating approximation (dotted)}
  \label{complex1}
\end{figure}

It is easy to check that the condition of Proposition~\ref{unitzeroprop} on the
position of the zeroes of $g(z)$ is not necessary. Comparing the coefficients
in (\ref{gderpol}) yields that if $n$ is odd, then (\ref{gderpol}) gives a
one-to-one relation between $g(z)$ and $h(z)$, while if $n$ is even, say $n = 2
k$, then regardless of the coefficient of $z^k$ in $g(z)$, we obtain the same
$h(z)$ (in which the coefficient of $z^k$ is zero.) From this, with
Lemma~\ref{circlepol}, we can also see that $g^*(z) = \gamma g(z)$ for some
constant $\gamma$ of modulus 1, and hence the set of zeroes of $g$ must be
symmetric with respect to the unit circle. However, it is unclear that among
such polynomials which ones generate $h(z)$ with zeroes on $T$ only.

 Finally, we note that the above proof of Theorem~\ref{planarpol} suggests an
algorithm that, starting from an initial set $\mathbf z$ of $n$ points on $T$,
transforms it into a new set $\Phi(\mathbf z)$; extremal sets are unaltered,
and numerical experiments suggests that $M(\mathbf{z}) \leq M(\Phi(\mathbf
z))$, therefore $\Phi(\mathbf z)$ is an ``improvement'' of $\mathbf{z}$. The
algorithm goes as follows.

\begin{figure}[h]
\epsfxsize = 9.5cm \centerline{\epsffile{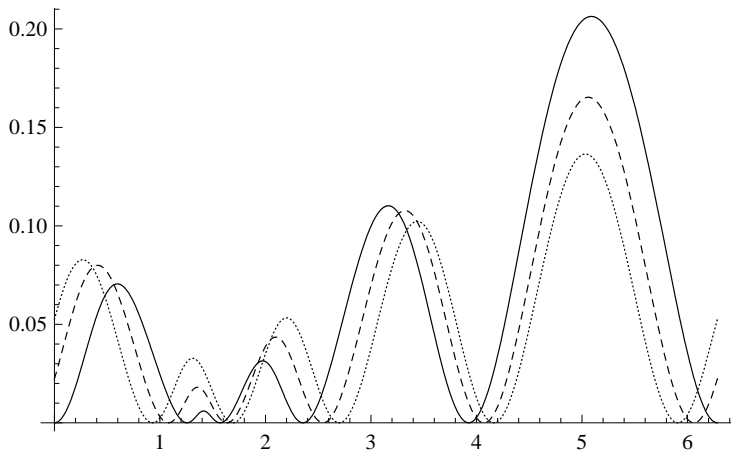}}
  \caption{First three steps of the iteration (plain, dashed, dotted)}
  \label{complex2}
\end{figure}

With the aid of the identities (\ref{sinabs}) and (\ref{abspol}), we obtain the
formula
\[
\sum_{j=1}^{n} \prod_{k \neq j} \sin^2 \left( \frac{t-t_k}{2}\right)
=\frac{(-1)^{n-1}}{4^{n-1} z^{n-1} \prod z_j} \sum_{j=1}^{n}z_j \prod_{k \neq
j}(z-z_k)^2,
\]
where, as usual, $z= e^{i t}$ and $z_j = e^{i t_j}$. Let $\alpha$ be a complex
number satisfying (\ref{alphaeq2}). Since the left hand side is a strictly
positive trigonometric polynomial, using the Fej\'er representation
(\ref{postrig}) (or just by calculating the coefficients of the right hand
side), we obtain that there exists a polynomial $g(z)$ of degree $n$ with roots
in the unit disc only (here 0 is not excluded), such that
\[
-\alpha^2 z\sum_{j=1}^{n} z_j \prod_{k \neq j} (z -z_k)^2 = g(z) g^*(z).
\]
 The polynomial $g(z)$ is unique up to change of
sign, and its leading coefficient is $\pm \, \alpha$.  Let $\Phi(\mathbf z)$ be
the set of the zeroes of the polynomial $g(z) + g^*(z)$; according to
Lemma~\ref{unitzeros}, $\Phi(\mathbf z) \subset T$ . If $\mathbf z$ is an
extremal set, then (\ref{rateq2}) implies that $g(z) = m h(z)$, and therefore
$\mathbf z = \Phi(\mathbf z)$. If, however, the initial set is not extremal,
then we conjecture that the successive iteration of $\Phi$ is converging toward
the (essentially unique) extremal case via sets with larger and larger
$M(\mathbf{z})$. To illustrate  this phenomenon, on Figure~\ref{complex2} we
plot, on the unit circle, the functions $1/G_\mathbf{z}(z)$ generated by the
point sets $\mathbf z$, $\Phi(\mathbf z)$ and $\Phi(\Phi(\mathbf z))$, where
$\mathbf z$ is given by (\ref{points}).

\section{Linear algebraic transformations}\label{linalgsec}

We return to the high dimensional cases of the polarization problems. Clearly,
the complex analytic proof cannot be applied here. However, the
characterisation results that we will obtain, especially
Theorem~\ref{strongpolarextr}, are very similar in spirit to
Lemma~\ref{equimini}: they essentially state that for the locally extremal
vector systems $(u_i)$, there are ``sufficiently many'' points $v$ on the unit
sphere, where $\prod |\la u_i, v \ra|$, or $\sum 1/\la u_i, v \ra^2$, attains
its extremal value.

In the present section, we transform the conjectures to purely linear algebraic
forms. The methods  are closely related to the ones in K. Ball's paper
\cite{ballcomplex}, see also the article of Leung, Li  and Rakesh~\cite{leung}.

Regarding the strong polarization problem, Conjecture~\ref{strongpolar}, it
will suit our purposes better to work with the following equivalent
formulation:
\bigskip

{\em Given a set $u_1, \dots, u_n$ of unit vectors, there is a vector $v$ of
norm $\sqrt{n}$, for which}
\begin{equation}\label{polarnormn}
\sum_{i=1}^{n} \frac{1}{\la u_i, v \ra^2 } \leq n.
\end{equation}

Extremal examples we have seen so far are $n$-dimensional orthonormal systems,
and sets for which  $(\pm u_1, \dots, \pm u_n)$ is equally distributed on the
unit circle. Next, we show that the orthogonal sums of extremal sets are also
extremal, and hence there exist extremal examples of any dimension up to  $n$.
\begin{prop}\label{extremalcases}
Suppose that $(u_1, \dots, u_n) \subset \R^n$ and $(\tilde{u}_1, \dots,
\tilde{u}_m) \subset \R^m$ are extremal with respect to
{\em(\ref{polarnormn})}. Then the system
\[
U= (u_1 \times \tilde{\mathbf{0}}, \dots, u_n \times \tilde{\mathbf{0}},
\mathbf{0} \times \tilde{u}_1, \dots, \mathbf{0} \times \tilde{u}_n),
\]
where $\mathbf{0}$ and $ \tilde{\mathbf{0}}$ are the origin of $\R^n$  and
$\R^m$, is also extremal in $\R^{n+m}$.
\end{prop}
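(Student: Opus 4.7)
The plan is to rephrase the extremality hypothesis in a scale-invariant form. Since $v \mapsto \sum_{i=1}^n 1/\la u_i, v\ra^2$ is homogeneous of degree $-2$, the reformulation (\ref{polarnormn}) is equivalent to
\[
|v|^2 \sum_{i=1}^n \frac{1}{\la u_i, v\ra^2} \geq n^2
\]
for every non-zero $v \in \R^n$, with equality attained somewhere (which is what extremality of $(u_1,\dots,u_n)$ means). Set $f(v) = \sum_{i=1}^n 1/\la u_i, v\ra^2$ and $\tilde f(\tilde v) = \sum_{j=1}^m 1/\la \tilde u_j, \tilde v\ra^2$, so that the two hypotheses read $|v|^2 f(v) \geq n^2$ and $|\tilde v|^2 \tilde f(\tilde v) \geq m^2$, both sharp.

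Next I would exploit the orthogonal-sum structure. Writing $V = v \times \tilde v$ with $v \in \R^n$ and $\tilde v \in \R^m$, the inner products decouple: $\la u_i \times \tilde{\mathbf 0}, V\ra = \la u_i, v\ra$ and $\la \mathbf 0 \times \tilde u_j, V\ra = \la \tilde u_j, \tilde v\ra$, hence
\[
|V|^2 \sum_{k=1}^{n+m} \frac{1}{\la U_k, V\ra^2} = \bigl(|v|^2 + |\tilde v|^2\bigr)\bigl(f(v) + \tilde f(\tilde v)\bigr).
\]
Expanding the product into four pieces, the diagonal terms $|v|^2 f(v)$ and $|\tilde v|^2 \tilde f(\tilde v)$ are already under control. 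For the two cross terms I would apply AM--GM and then invoke the individual extremalities a second time:
\[
|v|^2 \tilde f(\tilde v) + |\tilde v|^2 f(v) \geq 2\sqrt{|v|^2 f(v) \cdot |\tilde v|^2 \tilde f(\tilde v)} \geq 2 n m.
\]
Summing gives $|V|^2 F_U(V) \geq n^2 + m^2 + 2nm = (n+m)^2$, which is exactly the scale-invariant conjecture for a system of $n+m$ unit vectors in $\R^{n+m}$.

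Finally I would check that equality is attained. Take individual extremizers $v_0, \tilde v_0$ and rescale them so that $|v_0|^2/|\tilde v_0|^2 = n/m$; concretely, $|v_0|^2 = n$ and $|\tilde v_0|^2 = m$. This is exactly the AM--GM equality condition, and it simultaneously keeps both diagonal terms at their minima, so $V_0 = v_0 \times \tilde v_0$ realises $|V_0|^2 F_U(V_0) = (n+m)^2$ with $|V_0|^2 = n+m$. I do not expect any serious obstacle: the decoupling of the inner products is immediate from the block structure, and the remainder is a single application of AM--GM. The only point that deserves genuine care is synchronising the various normalisations --- verifying that the AM--GM equality and the individual equalities can in fact be realised simultaneously by one $V_0$ --- but the rescaling above shows this without difficulty.
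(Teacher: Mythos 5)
Your argument is correct and follows essentially the same route as the paper's proof: decompose $V = v \times \tilde v$, exploit the decoupling of the inner products together with the scale-invariant (degree-$0$ homogeneous) form of extremality of each subsystem, and exhibit equality with $\|v_0\|^2 = n$, $\|\tilde v_0\|^2 = m$. Your expansion of $(\,|v|^2+|\tilde v|^2)(f(v)+\tilde f(\tilde v))$ with AM--GM on the cross terms is just the Cauchy--Schwarz step implicit in the paper's chain of inequalities, so there is no substantive difference.
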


\begin{proof}
Let $\vee = v \times \tilde{v}$ be a point of $\R^{n+m}$ of norm $\sqrt{n+m}$,
where $v \in \R^n$ and $\tilde{v} \in \R^m$. Then $\|v\|^2 +
\|\tilde{v}\|^2=n+m$, and
\[
\sum_{u \in U}\frac{1}{\la u, \vee \ra^2} = \sum_{i=1}^{n} \frac{1}{\la u_i, v
\ra^2 } + \sum_{j=1}^{m} \frac{1}{\la \tilde{u}_i, \tilde{v} \ra^2 } \geq n
\frac{\|v\|^2}{n}+m \frac{\|\tilde{v}\|^2}{m}=n+m.
\]
Equality is achieved by taking $v$ and $\tilde{v}$ to be the points of norm
$\sqrt{n}$ and $\sqrt{m}$ in $\R^n$ and $\R^m$, so as equality holds in
(\ref{polarnormn}).
\end{proof}
We conjecture that all the extremal cases of the strong polarization problem
can be obtained this way, starting from point sets of the unit circle, whose
symmetrized  copies are equally distributed.

The obvious choice of the vector $v$ to satisfy (\ref{polarnormn}) would be the
one which minimises $\sum 1/\la u_i, v \ra ^2$. However, this property does not
lead to conditions on $v$ that are simple to exploit. Instead, we choose a
vector $v$ for which the function $\prod \la u_i , v \ra$ is locally extremal.
Our goal is to show that among these vectors there is one for which
(\ref{polarnormn}) holds. The reason for this approach is that for vectors
which are locally extremal with respect to the product, the following useful
fact holds.

\begin{prop}\label{locmaxrep}
Let $(u_1, \dots, u_n)$ be a system of unit vectors in $\R^d$, and suppose that
for the vector $v \in \R^d$ of norm $\sqrt{n}$, the function
\[
\left| \prod \la u_i, v \ra \right|
\]
is locally maximal. Then
\begin{equation}\label{veq}
v = \sum \frac{u_i}{\la u_i, v \ra}\,.
\end{equation}
\end{prop}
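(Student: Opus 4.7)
The plan is to recognize this as a constrained extremum problem and apply Lagrange multipliers to the logarithm of the product. Since $|\prod \la u_i, v\ra|$ is locally maximal at $v$, and the product would vanish along a codimension-1 locus if any $\la u_i, v\ra$ were zero, we must have $\la u_i, v\ra \neq 0$ for every $i$; in particular, the local maximum value is strictly positive, and we may work with $\log |\prod \la u_i, v\ra| = \sum \log |\la u_i, v\ra|$ in a neighbourhood of $v$.

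First I would write down the Lagrange condition for maximizing this smooth function under the constraint $\|v\|^2 = n$. The gradient of $\log |\la u_i, v \ra|$ with respect to $v$ is $u_i / \la u_i, v\ra$, while the gradient of $\tfrac12 \|v\|^2$ is $v$. Hence at the local maximum there exists a real $\lambda$ with
\begin{equation*}
\sum_{i=1}^n \frac{u_i}{\la u_i, v\ra} = \lambda\, v .
\end{equation*}

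Next I would identify $\lambda$ by pairing both sides with $v$: the left-hand side becomes $\sum_i \la u_i, v\ra / \la u_i, v\ra = n$, and the right-hand side is $\lambda \|v\|^2 = \lambda n$, so $\lambda = 1$. Substituting back yields exactly $v = \sum u_i / \la u_i, v\ra$, which is the claimed identity.

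There is no real obstacle here; the only subtlety is justifying smoothness and that no inner product vanishes at a local maximum, which follows because the product would otherwise be zero there while being nonzero at nearby $v'$ (for instance, by perturbing $v$ in a direction with nonzero pairing against all $u_i$, which exists since the $u_i$ span at most $\R^d$ and $\sqrt n \, S^{d-1}$ is $(d-1)$-dimensional). With that point settled, the computation above gives the result immediately.
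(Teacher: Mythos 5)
Your proof is correct and follows essentially the same route as the paper: a Lagrange multiplier argument under the constraint $\|v\|^2=n$, with the multiplier identified by taking the inner product with $v$ (the paper applies it directly to the product rather than its logarithm, which is only a cosmetic difference, and your remark that no $\la u_i,v\ra$ can vanish at a local maximum is a welcome, if minor, addition).
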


\begin{proof}
The Lagrange multiplier method yields that for a stationary $v$, the gradient
vectors of $\prod \la u_i, v \ra$ and $\|v\|$ are in the same 1-dimensional
subspace: for some $\lambda \in \R$,
\begin{equation*}\label{prodinv}
v = \lambda \sum_{i=1}^{n} \frac{u_i}{\la u_i, v \ra} \; \prod \la u_i, v \ra.
\end{equation*}
Taking inner products of both sides with $v$,
\[
\|v\|^2  = n \, \lambda \prod \la u_i, v \ra.
\]
Since $\|v\|^2 = n$, (\ref{veq}) follows from the previous two equations.
\end{proof}

Defining
\begin{equation} \label{alphav}
\alpha_i = \frac{1}{\la u_i, v \ra}
\end{equation}
and $\alpha  = (\alpha_i)_1^n$, formula (\ref{veq}) transforms to
\begin{equation}\label{ainv}
\left \la u_i, \sum \alpha_j u_j \right \ra = \frac{1}{\alpha_i}\,.
\end{equation}
The following definition is of great importance.

\begin{defi}
The vector $\alpha$ is an {\em inverse eigenvector} of the matrix $M$, if
\begin{equation}\label{inverseigen}
M \alpha = \alpha^{-1},
\end{equation}
where
\[
\alpha^{-1} = \left(\frac{1}{\alpha_1}, \dots, \frac{1}{\alpha_n} \right).
\]
\end{defi}

For two vectors $y, z \in \R^n$, we define their product $yz \in \R^n$ by
$(yz)_i = y_i z_i$. Under this multiplication, $\1$ is the unit element, and
the inverse is given by the above formula.

 The notion of inverse eigenvectors turned up in the solution of the complex
plank problem by K. Ball \cite{ballcomplex}. We will see that they play a
central role in the forthcoming discussion as well. Essentially, both the
complex plank problem and the polarization problems can be formulated as
geometric estimates about the location of inverse eigenvectors, which indicates
that these are very natural objects. As we shall see at the end of the section,
there is a ``duality relation'' between ordinary eigenvectors and inverse
eigenvectors, that is in some sense the same as the duality relation between
the Euclidean ball and the hyperboloid.

If $M$ denotes the Gram matrix of $(u_i)$, that is, $(M)_{ij} = \la u_i ,
u_j\ra$, then the vector $\alpha$ satisfying (\ref{ainv}) is an inverse
eigenvector of $M$. On the other hand, for any such  $\alpha$, the vector $v$
given by
\[
v = \sum \alpha_i u_i
\]
satisfies (\ref{veq}) and (\ref{alphav}), thus $\|v\|^2=n$. Hence the
polarization problem follows from the next statement:

\begin{conj}\label{eigenhyper}
For any real $n \times n$ Gram matrix $M$ with $1$'s on the diagonal, there
exists an inverse eigenvector $\alpha = (\alpha_i)_1^n$, for which
\[
\left| \prod \alpha_i \right| \leq 1.
\]
\end{conj}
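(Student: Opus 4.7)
The plan is to realize an inverse eigenvector of $M$ as coming from a local maximum of the product $\prod \la u_i, v \ra^2$ on a sphere, and thereby reduce the conjecture to the original real polarization inequality. Let $(u_1, \dots, u_n)$ be unit vectors in $\R^d$ with Gram matrix $M$, where $d = \rk M$; on the compact sphere $S = \{v \in \R^d : \|v\|^2 = n\}$, the continuous function $F(v) = \prod_i \la u_i, v \ra^2$ attains its maximum at some $v_*$.

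Applying Proposition~\ref{locmaxrep} at $v_*$ gives $v_* = \sum_i u_i / \la u_i, v_* \ra$. Setting $\alpha_i = 1/\la u_i, v_*\ra$, this reads $v_* = \sum \alpha_i u_i$, and taking inner product with $u_j$ yields
\[
(M \alpha)_j = \la u_j, v_*\ra = \frac{1}{\alpha_j},
\]
so $\alpha$ is an inverse eigenvector of $M$. Since $\prod \alpha_i^2 = 1/F(v_*)$ by construction, the bound $|\prod \alpha_i| \leq 1$ is equivalent to $F(v_*) \geq 1$, i.e.\ to
\[
\max_{\|v\|^2 = n} \prod_{i=1}^n \la u_i, v \ra^2 \geq 1,
\]
which after rescaling is the original real polarization conjecture (Conjecture~\ref{realpolconj}). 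Thus the two conjectures are formally equivalent, and the problem reduces to supplying a \emph{lower} bound for the maximum of $F$ on $S$.

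The main obstacle is precisely this lower bound, which is open in general. The diagonal AM--GM inequality $\|v\|^2 \geq n\bigl(\prod \la e_i, v \ra^2\bigr)^{1/n}$ settles the orthonormal case $M = I$ with equality at $v = (\pm 1, \dots, \pm 1)$, but for a general Gram matrix the off-diagonal entries can cause destructive cancellation in $\la M\alpha, \alpha\ra$, defeating any naive AM--GM argument. The most promising route is probabilistic, in the spirit of Arias-de-Reyna and Frenkel: compute $\E[F(X)]$ for a standard Gaussian $X \in \R^d$, express the result through hafnians or permanents of matrices derived from $M$, and try to push a real-variable analogue of Lieb's permanent inequality that saturates exactly at orthonormal systems. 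Failing a sharp identity, a continuity or degree argument might select, among the many inverse eigenvectors that typically solve $M\alpha = \alpha^{-1}$, one whose product can be controlled by deforming $M$ from the identity; the challenge is to track $|\prod \alpha_i|$ along such a deformation without losing the estimate, and this is the same obstruction that has kept Conjecture~\ref{realpolconj} open.
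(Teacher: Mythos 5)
You have not proved the statement, but neither does the paper: Conjecture~\ref{eigenhyper} is stated there as an open conjecture, introduced in Section~\ref{linalgsec} precisely as the linear-algebraic reformulation whose truth would imply Conjecture~\ref{realpolconj}. Your argument is the paper's own translation run in the opposite direction: taking a global maximiser $v_*$ of $\prod\la u_i,v\ra^2$ on the sphere of radius $\sqrt n$, invoking Proposition~\ref{locmaxrep} (equivalently Lemma~\ref{inversefind}) to see that $\alpha_i=1/\la u_i,v_*\ra$ is an inverse eigenvector of $M$, and observing that $|\prod\alpha_i|\leq 1$ is exactly the assertion $\max\prod\la u_i,v\ra^2\geq 1$, i.e.\ the real polarization conjecture after rescaling. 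That equivalence is correct (the maximum is positive since the $n$ hyperplanes $\la u_i,v\ra=0$ cannot cover the sphere, so the $\alpha_i$ are finite), and the converse implication, which the paper actually uses, follows from $\alpha^\top M\alpha=n$ and $\la u_i,\sum\alpha_j u_j\ra=1/\alpha_i$. So your write-up is an honest and accurate reduction, not a proof, and you say so.

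Where you diverge from the paper is in what partial progress is pursued. You point toward Gaussian averaging, hafnian/permanent inequalities in the spirit of Arias-de-Reyna and Frenkel, or a deformation/degree argument tracking inverse eigenvectors from $M=I$; the paper instead exploits the geometric reformulation (an ellipsoid $x^\top Mx=n$ meeting every branch of the hyperboloid $|\prod x_i|=1$) and a Fritz John--type separation argument to prove Theorem~\ref{polarextr}: the only \emph{full-dimensional} locally extremal configuration is the orthonormal one, the singular case remaining open. If you want your proposal to match the state of the art in the paper, the concrete next step along your route would be to explain how your deformation idea would handle degenerate Gram matrices, since that is exactly where the paper's John-type argument breaks down.
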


The strong polarization problem is implied by the following conjecture:

\begin{conj}\label{eigenball}
For any real $n \times n$ Gram matrix $M$ with $1$'s on the diagonal, there
exists an inverse eigenvector $\alpha = (\alpha_i)_1^n$, for which
\[
\sum \alpha_i^2 \leq n.
\]
\end{conj}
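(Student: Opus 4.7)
The natural plan is to produce an inverse eigenvector $\alpha$ by a variational principle for $|\prod \la u_i, v\ra|$, and then verify the bound $\sum \alpha_i^2 \le n$ for that particular choice. Concretely, fix unit vectors $u_1,\dots,u_n$ realising the Gram matrix $M$. The product $|\prod \la u_i, v\ra|$ is continuous on the sphere $\{\|v\|^2 = n\}$ and vanishes on the union of the hyperplanes $u_i^\perp$; so, assuming the $u_i$ span their ambient space, its supremum is attained at some interior point $v^*$ with all $\la u_i, v^*\ra \neq 0$. Proposition~\ref{locmaxrep} then gives $v^* = \sum_i u_i/\la u_i, v^*\ra$, so that $\alpha^*_i := 1/\la u_i, v^*\ra$ is an inverse eigenvector of $M$. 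The task is therefore reduced to proving the inequality $\sum_i 1/\la u_i, v^*\ra^2 \le n$ at this specific $v^*$.

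I would first test whether the maximality of $v^*$ supplies enough second-order information. A direct calculation on the tangent space of the sphere shows that the Hessian of $\log \prod \la u_i, v\ra^2$ at $v^*$ acts on a tangent vector $w$ as $-\bigl(\|w\|^2 + \sum_i \la u_i, w\ra^2/\la u_i, v^*\ra^2\bigr)$, which is automatically negative; consequently the maximum condition at second order is trivially satisfied and yields nothing new. This forces a different line of attack, and the closest analogue of the planar proof of Theorem~\ref{planarpol} is an outer variational argument. Define $\sigma(M) := \min\{\sum\alpha_i^2: M\alpha = \alpha^{-1}\}$ on the elliptope of Gram matrices with unit diagonal, select a local maximiser $M^*$ of $\sigma$, and exploit first-order stationarity under small perturbations of $M^*$ (the analogue of Lemma~\ref{equimini}) to deduce a rigid structural description of $M^*$. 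The target structure is an orthogonal-sum decomposition as in Proposition~\ref{extremalcases}, eventually reducing to either $M^* = I$ (where $\alpha = \1$ gives $\sum\alpha_i^2 = n$) or a planar configuration already covered by Theorem~\ref{planarpol}.

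The main obstacle is carrying out this outer variation rigorously. The inverse eigenvector $\alpha(M)$ need not be unique and need not depend smoothly on $M$, so selecting the ``right'' branch during perturbation of $M^*$ is subtle, and critical points can merge or disappear as $M$ moves. A complementary attack is a homotopy argument along $M_t = (1-t) I + t M$, starting at $t=0$ where the bound holds with equality for $\alpha = \1$ and following a continuous family of inverse eigenvectors; the difficulty is again to prevent the tracked branch from developing singularities and to verify monotonicity of the tracked value of $\sum\alpha_i^2$. Given the parallel with the planar analysis, I expect the decisive ingredient to be an analogue of equi-oscillation: a geometric rigidity statement forcing that at an extremal $M^*$ the vectors $(u_i)$ split into either mutually orthogonal clusters or planar symmetric subsystems, after which the bound follows from the cases already in hand.
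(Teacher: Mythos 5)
The statement you are trying to prove is not proved in the paper at all: Conjecture~\ref{eigenball} is precisely the matrix form of the strong polarization problem, which the thesis leaves open. What the paper establishes is only (i) the planar case (Theorem~\ref{planarpol}, and again via the rank-two case of Conjecture~\ref{mconj}), and (ii) a John-type first-order characterisation of locally extremal configurations (Theorem~\ref{strongpolarextr}), which in the non-degenerate case yields Corollary~\ref{stpolfull}. Your proposal likewise does not contain a proof. Its first paragraph is sound and matches the paper's reduction: maximising $|\prod\la u_i,v\ra|$ on the sphere of radius $\sqrt n$ and invoking Proposition~\ref{locmaxrep} (equivalently Lemma~\ref{inversefind}) does produce an inverse eigenvector, so the problem becomes the bound $\sum\alpha_i^2\le n$ at that point. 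But from there on, everything that would actually establish the bound is deferred: the ``outer variational argument'' over the elliptope is only sketched, the selection of a branch of inverse eigenvectors under perturbation is flagged as subtle but not resolved, the homotopy $M_t=(1-t)I+tM$ is proposed without any mechanism to rule out branch singularities or to prove monotonicity of $\sum\alpha_i^2$, and the final ``rigidity statement'' (extremal systems split into orthogonal clusters or planar symmetric subsystems) is exactly the conjectured structure of extremal cases stated after Proposition~\ref{extremalcases}, asserted rather than derived. A proof cannot end with ``I expect the decisive ingredient to be an analogue of equi-oscillation''; that ingredient is the open content of the problem.

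It is worth noting where your outer-variation idea meets the paper's actual partial result, because that shows concretely where such an argument stalls. The paper carries out the first-order perturbation at a locally extremal ellipsoid via trace duality and obtains the representation $P(M)=\sum c_i\,P(u_i)\otimes P(u_i^{-1})$ of Theorem~\ref{strongpolarextr}; when $M$ is non-singular this gives $\sum c_i\le (n-1)/n$ and hence Corollary~\ref{stpolfull}, i.e.\ the full-dimensional locally extremal case is the ball. The obstruction is the singular (lower-dimensional) extremal case: there one only gets a representation of the projection onto the subspace of $P(\ee^*)$, the norms of the contact points in that subspace cannot be bounded below by $\sqrt n$, and the contact points need not even have equal norms — the paper says explicitly that ``some relation connected to the scaling is missing.'' Your proposed reduction to orthogonal sums of planar configurations would have to overcome precisely this degenerate case, and neither of your two suggested mechanisms (branch tracking or homotopy) addresses it. So the proposal is a reasonable research programme, broadly parallel to the paper's Sections~\ref{linalgsec} and~\ref{strongpolarsec}, but it contains a genuine gap at the decisive step and does not prove the conjecture.
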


As we have mentioned earlier, the key step in proving the complex plank theorem
is to transform the original problem to the following statement:

\begin{theorem}[\cite{ballcomplex}]\label{complexeigen}
Let $H = (h_{jk})$ be an $n \times n$ complex Gram matrix. Then there are
complex numbers $w_1, \dots, w_n$ of absolute value at most 1, for which
\[
w_j \sum_k h_{jk} \bar w_k = 1
\]
for every $j$.
\end{theorem}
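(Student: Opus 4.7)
My plan is to mimic the variational argument of Proposition \ref{locmaxrep} in the complex setting.

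Since $H$ is Hermitian positive semidefinite, I would write $h_{jk} = \la u_j, u_k\ra$ for vectors $u_1, \dots, u_n$ in a complex Hilbert space and work in their span. Substituting $v := \sum_k \bar w_k u_k$ turns $\sum_k h_{jk}\bar w_k$ into $\la u_j, v\ra$, so the equation $w_j \sum_k h_{jk}\bar w_k = 1$ becomes $\la u_j, v\ra = 1/w_j$, and self-consistency of $v$ forces the fixed-point identity
\[
v = \sum_{k=1}^n \frac{u_k}{\overline{\la u_k, v\ra}},
\]
the complex analogue of~(\ref{veq}) from Proposition \ref{locmaxrep}. The constraint $|w_j| \leq 1$ becomes $|\la u_j, v\ra| \geq 1$ for every $j$, so the theorem reduces to producing a $v$ in the span of the $u_k$ satisfying both the fixed-point identity and all the lower bounds.

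To generate a candidate, I would maximize the potential
\[
\Phi(v) = \sum_{j=1}^n \log\bigl|\la u_j, v\ra\bigr|^2
\]
on the closed ball $\{v : \|v\|^2 \leq n\}$, with $\Phi = -\infty$ on the hyperplanes where any $\la u_j, v\ra$ vanishes. Upper semicontinuity together with compactness yield a maximizer $v^*$, and the homogeneity $\Phi(cv) = n\log|c|^2 + \Phi(v)$ pushes $v^*$ onto the sphere $\|v^*\|^2 = n$. A Wirtinger Lagrange calculation gives
\[
\sum_{k=1}^n \frac{u_k}{\overline{\la u_k, v^*\ra}} = \lambda\, v^*,
\]
and pairing with $v^*$ forces $\lambda = 1$, so $v^*$ satisfies the fixed-point identity. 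Setting $w_k := 1/\la u_k, v^*\ra$ then reproduces $w_j \sum_k h_{jk}\bar w_k = 1$.

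The main obstacle is to secure the pointwise lower bounds $|\la u_j, v^*\ra| \geq 1$ for every $j$; nothing in this extremal argument forces them. The best one immediately obtains is $\max \Phi \geq 0$ (a consequence of Arias-de-Reyna's bound on the complex polarization constant), which gives only the geometric-mean inequality $\prod |\la u_j, v^*\ra|^2 \geq 1$. To upgrade to the pointwise statement, I would restrict the variational problem to the intersection of the sphere $\|v\|^2 = n$ with the closed, non-convex region $C = \{v : |\la u_j, v\ra|^2 \geq 1 \text{ for every } j\}$: this intersection is non-empty when $H$ is invertible (solve $\la u_j, v\ra = c_j$ with $|c_j|=1$ and rescale; the singular case is obtained by perturbation), and the key claim would be that the maximum of $\Phi$ on this intersection is attained in the relative interior of $C$, so that the unconstrained Lagrange analysis still applies and delivers the fixed-point equation together with the desired bounds. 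Ruling out the alternative---in which some constraint $|\la u_j, v\ra|^2 = 1$ is active at the maximizer, and the KKT multipliers for $C$ spoil the fixed-point equation---is the most delicate part; I expect it to require a dimension/rank argument in the spirit of F.~John's theorem (compare Section \ref{polarsec}), exploiting the two real degrees of freedom carried by each complex phase $w_j/|w_j|$. It is precisely this extra phase freedom that distinguishes the complex case, where Theorem \ref{complexeigen} holds, from the real analogue Conjecture \ref{eigenball}, which remains open.
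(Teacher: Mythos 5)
The theorem you are asked to prove is not proved in the thesis at all: it is quoted from Ball's paper \cite{ballcomplex}, and the surrounding text (Proposition~\ref{locmaxrep}, Lemma~\ref{inversefind}) only records the easy variational half of the story, namely that maximizing the product on the relevant quadric produces an inverse eigenvector. Your proposal reproduces exactly that easy half: the reduction to the fixed-point identity and the Wirtinger--Lagrange computation are fine, but the entire content of Theorem~\ref{complexeigen} is the $\ell_\infty$ bound $|w_j|\leq 1$, i.e. $|\la u_j, v^*\ra|\geq 1$ for every $j$, and this is precisely the step you leave open. You acknowledge this yourself and offer only a plan (restrict the maximization of $\Phi$ to $\{\|v\|^2=n\}\cap C$ and show the maximizer lies in the relative interior of $C$), whose decisive step you describe as ``the most delicate part'' that you ``expect'' to work; as written this is not a proof. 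Worse, the preliminary claim that $\{\|v\|^2=n\}\cap C$ is non-empty for invertible $H$ is unjustified: solving $\la u_j,v_0\ra=c_j$ with unimodular $c_j$ gives a minimal-norm interpolant with $\|v_0\|^2=c^*H^{-1}c$, and since $\tr H=n$ forces $\tr H^{-1}\geq n$ (AM--HM), the average of $c^*H^{-1}c$ over independent unimodular phases is at least $n$, with equality only for $H=I$. So in general you cannot scale $v_0$ \emph{up} to norm $\sqrt n$; scaling down destroys the inequalities $|\la u_j,v\ra|\geq 1$, and finding unimodular phases with $c^*H^{-1}c\leq n$ is itself a nontrivial statement of the same flavour as the theorem. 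Finally, if the constrained maximum does sit on the boundary of $C$, the active constraints contribute multiplier terms parallel to the gradient terms already present, which alters the coefficients in the fixed-point identity; nothing in your sketch rules this out, and no dimension-counting argument is actually supplied.

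Two smaller points. First, the hypothesis that $H$ is the Gram matrix of \emph{unit} vectors ($h_{jj}=1$) is essential (otherwise the statement is false, e.g. after scaling $H$), and you never state or use it, although your appeal to Arias-de-Reyna's bound silently requires it. Second, the mechanism that makes the complex case work in \cite{ballcomplex} is not a John-type rank argument on the constraint region $C$: Ball finds the inverse eigenvector by maximizing the coordinate product on the ellipsoid $\{w:\sum_{j,k}h_{jk}w_j\bar w_k=n\}$ (the complex counterpart of Lemma~\ref{inversefind}) and then carries out a separate, genuinely nontrivial argument exploiting the free complex phases to place the resulting vector in the polydisc --- exactly the phase freedom whose absence is why the real analogue, Conjecture~\ref{eigenball}, remains open. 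Your intuition that the phases are the crux is correct, but the proposal does not convert that intuition into an argument.
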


The theorem states that every complex Gram matrix with diagonal $\1$ has an
inverse eigenvector in the complex $l_\infty$ unit ball. Now,
Conjecture~\ref{eigenball}  is the real analogue of Theorem~\ref{complexeigen}
in the sense that the complex $l_\infty$-ball is replaced with the
appropriately scaled  real $l_2$-ball; both of these statements give
fundamental estimates about the location of inverse eigenvectors.

 The geometric difference
between the original polarization problem and the strong version is apparent:
the first essentially asserts that Gram matrices have an inverse eigenvector in
the hyperboloid with boundary
\begin{equation}\label{hypereq}
\hh = \left \{ x = (x_i)_1^n \in \R^n: \left | \prod x_i \right | = 1 \right\},
\end{equation}
while the latter states that there is such a vector even in the inscribed ball
of $\hh$, that is, the standard Euclidean ball of radius $\sqrt{n}$ centred at
the origin.

We shall call real, symmetric, positive semi-definite matrices simply {\em
positive}$\,$; hence, every positive matrix is a Gram matrix (of a system not
necessarily consisting of unit vectors).

Inverse eigenvectors of positive matrices possess a useful geometric property.
Observe that the proof of Proposition~\ref{locmaxrep} and (\ref{alphav}) yields
that if the point $\alpha$  is locally extremal for the function $\prod
\alpha_i$, subject to the condition
\begin{equation}\label{alphacrit}
\left \| \sum \alpha_i u_i \right\| = \sqrt{n},
\end{equation}
which is equivalent to $\alpha^\top A \alpha=n$, then $\alpha$ is an inverse
eigenvector. Proposition~\ref{locmaxrep} would then suggest to look for
minimisers of $|\prod \alpha_i|$. However, the {\em minimum} of the modulus of
the product, subject to the criterium (\ref{alphacrit}), is clearly $0$, and
one rather would like to {\em maximise} it to obtain meaningful information.
The following lemma is a slight modification of the one  in
$\cite{ballcomplex}$.

\begin{lemma}\label{inversefind}
Suppose that $M = (m_{ij})$ is a positive $n \times n$ matrix and that $x=
(x_1, \dots, x_n)$ is a  local maximum point for to the function
\[
\left | \prod x_i \right |
\]
on the $(n-1)$-dimensional manifold defined by
\[
x^\top M x = n.
\]
Then
\[
x_i \sum_{j} m_{ij}\, x_j = 1
\]
for every $i$, that is, $x$ is an inverse eigenvector of $M$.
\end{lemma}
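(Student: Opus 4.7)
The plan is to apply the Lagrange multiplier method to the optimisation problem, essentially repeating the computation of Proposition~\ref{locmaxrep} but now without assuming that the vectors $u_i$ giving rise to the Gram matrix are unit vectors. The argument is short and mostly mechanical, so the proposal is correspondingly brief.

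First I would verify that at a local maximum, all coordinates $x_i$ are nonzero. Indeed, since $M$ is positive and the manifold $\{x : x^\top M x = n\}$ contains points with no vanishing coordinate (for positive definite $M$ this is clear, and in the semi-definite case one works on the relevant slice where the form is definite), the maximum value of $|\prod x_i|$ is strictly positive, and so at a local maximiser $x$ one has $x_i \neq 0$ for every $i$. Thus near $x$ we may work with the smooth function $\sum_i \log |x_i|$ in place of $|\prod x_i|$.

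Next I would write down the Lagrange condition. The gradient of $\sum_i \log |x_i|$ has $i$-th component $1/x_i$, while the gradient of $x^\top M x$ has $i$-th component $2 \sum_j m_{ij} x_j$. Hence there exists $\lambda \in \R$ with
\[
\frac{1}{x_i} = 2 \lambda \sum_j m_{ij} x_j \qquad \text{for every } i,
\]
or equivalently $x_i \sum_j m_{ij} x_j = 1/(2\lambda)$ for every $i$. Summing this identity over $i$ yields $x^\top M x = n/(2\lambda)$, and combined with the constraint $x^\top M x = n$ this forces $\lambda = 1/2$. Substituting back gives $x_i \sum_j m_{ij} x_j = 1$ for every $i$, which is precisely the inverse eigenvector condition.

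There is no genuine obstacle here; the only mildly delicate point is the preliminary non-vanishing step, which has to be handled before one can legitimately use the logarithm and write down the Lagrange equations. Once that is done, the homogeneity of both the objective and the constraint under scaling $x \mapsto tx$ makes the calculation of the multiplier automatic, and the lemma follows.
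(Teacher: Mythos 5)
Your proposal is correct and follows essentially the same route as the paper: the paper's proof is exactly the Lagrange multiplier computation (as in Proposition~\ref{locmaxrep}), yielding $x_i \sum_j m_{ij} x_j = \lambda$ for every $i$, and then summing over $i$ and comparing with $x^\top M x = n$ to conclude $\lambda = 1$. Your preliminary remark that the coordinates are nonzero (so that one may differentiate, e.g.\ via $\sum_i \log|x_i|$) is a minor point the paper leaves implicit, and your bookkeeping of the multiplier is just a reparametrisation of the same identity.
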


\begin{proof}
By the Lagrange multiplier method, just as in the proof of
Proposition~\ref{locmaxrep}, we immediately obtain that there exists a
$\lambda>0$, for which
\[
x_i \sum_{j} m_{ij} x_j = \lambda
\]
for every $i$. Summing these equations for all $i$, and comparing with $x^\top
M x = n$, yields that $\lambda = 1$.
\end{proof}

For any positive $n \times n$ matrix $M$, the domain
\begin{equation}\label{ellipseq}
\ee = \{x \in \R^n : x^\top M x = n\}
\end{equation}
is a (possibly infinite) $n$-dimensional ellipsoid in the sense that if  $M$ is
singular, say $\rk M = k < n$, then the ellipsoid is obtained by the direct
product of a non-degenerate $k$-dimensional ellipsoid, and $\R^{n-k}$. In this
case, we say that $(n-k)$ axes of the ellipsoid are of infinite length.

The structure of the inverse eigenvalues of $M$ has been described by Leung, Li
and Rakesh in \cite{leung}, see Proposition 3 therein. Let us call the set of
points of $\R^n$ of coordinates with fixed signs, a {\em quadrant of} $\R^n$.
Then $\R^n$ consists of $2^n$ quadrants. It is not complicated to show that
quadrants which intersect $\ker M$ do not contain inverse eigenvectors, while
the others contain exactly one inverse eigenvector. For quadrants $Q$ which do
not intersect $\ker M$, the intersection $Q \cap \ee$ is finite and compact,
and by convexity, it is clear that there is exactly one point $x$ that
maximises $| \prod x_i |$ on $\ee$. By Proposition~\ref{inversefind}, this
point is the unique inverse eigenvector in $Q$.

The ``duality'' between eigenvectors and  inverse eigenvectors in some sense is
equivalent to the relation between the Euclidean ball $B_2^n$ and the
hyperboloid $\hh$, since the eigenvectors are the stationary points on $\ee$
with respect to the Euclidean norm, while the inverse eigenvectors are the
stationary points with respect to the ``product norm'', that is, the modulus of
the product of the coordinates. However, we do not believe that the role played
by inverse eigenvectors is fully understood yet.

\section{The polarization problem}\label{polarsec}

Our goal in this section is to tackle Conjecture~\ref{eigenhyper}, which
implies the original polarization problem, Conjecture~\ref{realpolconj}. We
shall show that the only full-dimensional extremal vector system is the
orthonormal system.

 In view of the previous discussion, using formulas (\ref{ellipseq}) and
(\ref{hypereq}), Conjecture~\ref{eigenhyper} is equivalent to the following
statement:
\bigskip

{\em For any positive matrix $M$ with $1$'s on the diagonal, there is a branch
of the hyperboloid $\hh$ that does not intersect the ellipsoid $\ee$ given by
$x^\top M x = n$. }
\bigskip

The condition $m_{ii}=1$ is equivalent to the fact that $\sqrt{n} \, e_i \in
\ee$ for every $i$, where $(e_i)_1^n$ is the standard orthonormal basis of
$\R^n$.

From now on, $\1$ denotes the vector $(1,\dots, 1)$ in $\R^n$. By scaling, the
statement can be transformed to the following form:

\begin{conj}\label{ellhyp}
Suppose that the matrix $M$ has $\lambda \, \1$ as diagonal. If the ellipsoid
\[
\ee = \{x \in \R^n : x^\top M x = n\}
\]
meets every branch of the hyperboloid $\hh$, then $\lambda \leq 1$.
\end{conj}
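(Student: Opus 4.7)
My strategy is to re-express the hypothesis as $2^n$ quadratic inequalities, one per inverse eigenvector, and then to collapse them into a single trace bound on $M$ by means of an identity of F.~John type. First, I assume $M$ to be invertible, so that every quadrant $Q_\sigma$ of $\R^n$ (indexed by $\sigma \in \{\pm 1\}^n$) carries a unique inverse eigenvector $x^{(\sigma)}$ on $\ee$; the singular case should follow by approximation. By Lemma~\ref{inversefind}, $x^{(\sigma)}$ is the point of $\ee \cap Q_\sigma$ at which $|\prod_i x_i|$ is maximal, so the assumption that $\ee$ meets every branch of $\hh$ becomes
\[
\Bigl| \prod_i x_i^{(\sigma)} \Bigr| \geq 1 \quad \text{for every } \sigma.
\]
AM--GM applied to $(x_i^{(\sigma)})^2$ then gives $\|x^{(\sigma)}\|^2 \geq n$, and combining with $(x^{(\sigma)})^\top M x^{(\sigma)} = n$ yields the pointwise quadratic estimate
\[
(x^{(\sigma)})^\top (M - I)\, x^{(\sigma)} \leq 0 \quad \text{for every } \sigma.
\]

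To convert these $2^n$ estimates into a bound on $\tr(M) = n\lambda$, the plan is to produce non-negative weights $c_\sigma$, not all zero, satisfying the John-type identity
\[
\sum_\sigma c_\sigma\, x^{(\sigma)} (x^{(\sigma)})^\top \;=\; \mu\, I, \qquad \mu > 0.
\]
Given such a ``tight frame'' decomposition, taking the Hilbert--Schmidt inner product of both sides with $M - I$ produces
\[
0 \;\geq\; \sum_\sigma c_\sigma (x^{(\sigma)})^\top (M - I)\, x^{(\sigma)} \;=\; \mu\, \tr(M - I) \;=\; \mu\, n\, (\lambda - 1),
\]
and therefore $\lambda \leq 1$, as desired.

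The main obstacle is of course the John-type identity itself, which is the genuine analogue of F.~John's characterization of the ellipsoid of maximal volume. In the extremal case $M = I$ the inverse eigenvectors are the sign vectors $\sigma \in \{\pm 1\}^n$, and $\sum_\sigma \sigma \sigma^\top = 2^n I$ furnishes the decomposition for free. For arbitrary $M$ satisfying the hypothesis one must instead show that the $2^n$ inverse eigenvectors positively resolve the identity; by Farkas' lemma this is equivalent to the implication ``$(x^{(\sigma)})^\top N x^{(\sigma)} \leq 0$ for all $\sigma \Rightarrow \tr N \leq 0$'' for every symmetric $N$. Establishing this dual statement by exploiting the full ``contact'' configuration of $\ee$ and $\hh$ over the $2^n$ quadrants, in the spirit of F.~John, is where I expect the real difficulty to lie.
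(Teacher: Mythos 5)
Your reduction to the pointwise inequalities is sound and mirrors the paper's own setup: by Lemma~\ref{inversefind} the inverse eigenvector $x^{(\sigma)}$ maximises $|\prod_i x_i|$ on $\ee$ within its quadrant, so meeting every branch gives $|\prod_i x_i^{(\sigma)}|\geq 1$, AM--GM gives $\|x^{(\sigma)}\|^2\geq n$, and summing the defining equations gives $(x^{(\sigma)})^\top M x^{(\sigma)}=n$. The gap is exactly the step you flag at the end, and it is not a deferrable technicality --- it is the whole problem. The statement you are proving is a \emph{conjecture} in the paper; the text does not prove it for general $M$. What is actually proved (Theorem~\ref{polarextr}) is the conclusion only for ellipsoids that are \emph{locally extremal} with respect to Conjecture~\ref{ellhyp} and whose matrix is nonsingular, and there the John-type decomposition is not postulated but \emph{derived from local extremality}: since no symmetric perturbation $H$ with zero diagonal can satisfy $u_i^\top H u_i<0$ at all contact points while $x^\top H x\geq 0$ on $\ker M$, trace duality and separation of the cones $\pos\{u_i\otimes u_i\}$ and $\pos\{x\otimes x:x\in\ker M\}$ yield $K+D=\sum c_i\,u_i\otimes u_i$ with $c_i\geq 0$, $K$ in the kernel cone and $D$ \emph{diagonal} --- not $\mu I$; the constant-diagonal hypothesis then enters through $\la M,D\ra=\lambda\,\tr D=n$, while $\tr D\geq n$ follows from $\|u_i\|^2\geq n$. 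For a general, non-extremal $M$ there is no variational principle forcing the $2^n$ inverse eigenvectors to positively resolve the identity (or even a diagonal matrix), and your Farkas reformulation is not easier than the original statement: proving that dual implication for every admissible $M$ would settle Conjecture~\ref{ellhyp} itself, hence the full-dimensional real polarization problem, which is open.

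Two further cautions. First, insisting on $\mu I$ asks for more than even extremality delivers: because admissible perturbations must preserve the constant diagonal, the separation argument only produces a diagonal right-hand side plus kernel terms, and that weaker form is what the paper exploits. Second, ``the singular case should follow by approximation'' is optimistic: quadrants meeting $\ker M$ carry no inverse eigenvectors at all, and the paper explicitly notes that its argument breaks down for degenerate $M$ --- the lower-dimensional extremal configurations are precisely what Theorem~\ref{polarextr} cannot rule out and what motivates the different treatment of the strong polarization problem in Section~\ref{strongpolarsec}.
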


Suddenly, we find ourselves in a convenient setting: we can try to characterize
the ellipsoids with maximal diagonal entries among those, that satisfy the
above conditions. Such an approach was first used by Fritz John in his seminal
1948 paper \cite{john}, in which he managed to characterize ellipsoids of
maximal volume inscribed in a convex body. The following formulation of his
result appears in the article of K. Ball \cite{ballintro}.

\begin{theorem}[John \cite{john}]\label{johnell}
Each convex body $K$ contains an unique ellipsoid of maximal volume. This
ellipsoid is $B_2^n$ if and only if $B_2^n \subset K$ and there are vectors
$(u_i)_1^m$ of norm 1 on the boundary of $K$ and positive numbers $(c_i)_1^m$
satisfying
\[
\sum c_i u_i =0
\]
and
\begin{equation}\label{tensorid}
\sum c_i \, u_i \otimes u_i = I_n.
\end{equation}
\end{theorem}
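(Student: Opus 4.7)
The plan is to prove three things in turn: existence and uniqueness of the ellipsoid of maximal volume, sufficiency of the John decomposition, and necessity. Existence follows from a compactness argument, since the set of pairs $(v, A)$ with $A$ positive semidefinite and $v + A B_2^n \subset K$ is compact (after bounding parameters by an outer ball of $K$), and the volume $\det(A)\, \mathrm{vol}(B_2^n)$ is continuous. Uniqueness uses strict log-concavity of $\det$ on positive definite matrices: if $E_1 = v_1 + A_1 B_2^n$ and $E_2 = v_2 + A_2 B_2^n$ are two maximisers, the Minkowski average $\tfrac12(v_1+v_2) + \tfrac12(A_1+A_2) B_2^n$ lies in $K$ and, unless $A_1 = A_2$ and $v_1 = v_2$, has strictly larger volume.

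For sufficiency, assume $B_2^n \subset K$ and that $(u_i)_1^m \subset \partial K \cap S^{n-1}$ with positive weights $(c_i)_1^m$ satisfy the two identities. Let $E = v + T B_2^n$ be an arbitrary ellipsoid in $K$ with $T$ positive definite. Since $B_2^n$ touches $K$ at each $u_i$, the hyperplane $\{x : \la u_i, x\ra = 1\}$ supports $K$ at $u_i$, so $\la u_i, x\ra \leq 1$ for $x \in E$, i.e.\ $\la u_i, v\ra + \|Tu_i\| \leq 1$. Multiplying by $c_i$, summing, and using $\sum c_i u_i = 0$ together with $\sum c_i = \tr I_n = n$ gives $\sum c_i \|Tu_i\| \leq n$. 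Then $\tr T = \tr\bigl(T \sum c_i u_i \otimes u_i\bigr) = \sum c_i \la Tu_i, u_i\ra \leq \sum c_i \|Tu_i\| \leq n$, and AM--GM on the eigenvalues of $T$ yields $\det T \leq (\tr T/n)^n \leq 1$, so $\mathrm{vol}(E) \leq \mathrm{vol}(B_2^n)$.

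For necessity, suppose $B_2^n$ is maximal and let $\mathcal{C} = \partial B_2^n \cap \partial K$ be the compact set of contact points. Consider in the space $\mathrm{Sym}(n) \oplus \R^n$ the closed convex cone $\Gamma$ generated by $\{(u \otimes u, u) : u \in \mathcal{C}\}$. The claim is that $(I_n, 0) \in \Gamma$; once this is established, Carathéodory's theorem extracts finitely many contact points and positive weights witnessing the desired decomposition. If $(I_n, 0) \notin \Gamma$, the Hahn--Banach separation theorem furnishes a symmetric matrix $H$ and a vector $w \in \R^n$ with $\la Hu, u\ra + \la w, u\ra \leq 0$ for every $u \in \mathcal{C}$, while $\tr H > 0$. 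Replacing $H$ by $H - \eta I_n$ for small $\eta > 0$ preserves positivity of the trace and upgrades the inequality to $\la Hu, u\ra + \la w, u\ra \leq -\eta$ on $\mathcal{C}$.

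Now, for small $t > 0$, consider the ellipsoid $E_t = tw + (I_n + tH/2) B_2^n$, of volume $(1 + t\,\tr H/2 + O(t^2))\,\mathrm{vol}(B_2^n) > \mathrm{vol}(B_2^n)$. At a contact point $u$, $\sup_{y \in B_2^n} \la u, tw + (I_n + tH/2)y\ra = 1 + t(\la w, u\ra + \la Hu, u\ra/2) + O(t^2) \leq 1 - t\eta/2 + O(t^2) < 1$ for small $t$, and by continuity the same holds on a neighbourhood of $\mathcal{C}$ in $\partial B_2^n$; outside such a neighbourhood $\partial B_2^n$ sits at a positive distance from $\partial K$ by compactness, easily absorbing an $O(t)$ perturbation. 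Hence $E_t \subset K$ with $\mathrm{vol}(E_t) > \mathrm{vol}(B_2^n)$, contradicting maximality. The main obstacle is this last perturbation step: the first-order inequalities at contact points can a priori hold with equality, and the strictification achieved by subtracting $\eta I_n$ is what allows the first-order gain in volume to dominate the second-order error terms simultaneously at contact and non-contact points.
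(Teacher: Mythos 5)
The paper itself gives no proof of this theorem --- it is quoted from John \cite{john} in the formulation of Ball's survey \cite{ballintro} --- so there is nothing to compare line by line; your route is the standard variational/separation proof (support hyperplanes at the contact points plus trace and AM--GM for sufficiency; separating $(I_n,0)$ from the cone generated by the pairs $(u\otimes u,\,u)$ and perturbing the ball for necessity), and its overall architecture is sound. Two steps, however, do not work as written. First, uniqueness: strict log-concavity of $\det$ only forces $A_1=A_2$. If the two maximisers are translates of the same ellipsoid ($A_1=A_2=A$, $v_1\neq v_2$), the Minkowski average has exactly the same volume, not a strictly larger one, so your argument does not exclude distinct centres. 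The standard repair is to observe that the convex hull of two translates of $A B_2^n$ contains an ellipsoid elongated in the direction $v_1-v_2$ whose volume is strictly larger. Second, in the necessity step there is a factor-of-two mismatch: for $E_t=tw+(I_n+tH/2)B_2^n$ the support function in a contact direction $u$ is $1+t\bigl(\la w,u\ra+\tfrac12\la Hu,u\ra\bigr)+O(t^2)$, whereas the separation only gives $\la Hu,u\ra+\la w,u\ra\le-\eta$; when $\la w,u\ra>0$ the claimed bound $1-t\eta/2+O(t^2)$ does not follow. Either drop the $1/2$ (take $E_t=tw+(I_n+tH)B_2^n$, of volume $(1+t\,\tr H+O(t^2))\,\mathrm{vol}(B_2^n)$) or scale the translation by $t/2$ as well; with the calibration fixed, your neighbourhood-plus-compactness containment argument does go through, since $h_K(\theta)=1$ exactly at contact directions and $h_K\ge 1+\delta$ off a neighbourhood of them.

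A smaller point: you place $(I_n,0)$ in the \emph{closed} convex cone and then invoke Carath\'eodory, but an exact finite decomposition requires knowing that the conical hull of $\{(u\otimes u,\,u):u\in\partial B_2^n\cap\partial K\}$ is already closed. This is true --- the generator set is compact and each generator has first component of trace $1$, so $0$ is not in its convex hull --- but it deserves a sentence, as closedness of finitely-generated-by-a-compactum cones is exactly the fact that converts ``limit of decompositions'' into a decomposition. With these repairs your proof is a correct rendering of the argument the paper refers to via \cite{ballintro}.
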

Here $I_n$ denotes the $n \times n$ identity map, and $u \otimes u$ is the rank
one orthogonal projection onto the subspace spanned by $u$. In general, for two
vectors $x, y \in \R^n$, the linear transformation $x \otimes y$ is given by
\[
x \otimes y\, (z) = x\,\la y,z \ra.
\]
Of course, the matrix of this transformation is the tensor product of the two
vectors: if $x=(x_i)_1^n$ and $y = (y_i)_1^n$, then
\[ (x \otimes y )_{ij} = x_i \, y_j.
\]

 The trace of $u \otimes v$ is $\la u, v \ra$. In particular, taking
traces of both sides in (\ref{tensorid}) yields that $\sum c_i =1$, hence the
identity map arises as a {\em convex combination} of the orthogonal
projections.

An equivalent formulation of condition (\ref{tensorid}) is that for each $x \in
\R^n$,
\[
\sum c_i \la x, u_i \ra ^2 = |x|^2.
\]
Hence, the condition essentially says that the contact points between $K$ and
$B_2^n$ behave like an orthonormal basis.

We do not elaborate on the far reaching generalisations of John's theorem that
have been obtained so far; the general method of proving these results is well
described in Ball \cite{ballintro} or, for instance, Bastero and Romance
\cite{bastero}.

Now, for the polarization problem. Our goal is to characterize the ellipsoids
with maximal corresponding diagonal entries among those, that satisfy the
conditions of Conjecture~\ref{ellhyp}. We call $\ee$  {\em locally extremal
with respect to Conjecture}~\ref{ellhyp},  if $\ee$ is given by $x^\top M x
=n$, where $\diag M = \lambda \1$, and for any other ellipsoid in a small
neighbourhood of $\ee$ satisfying the conditions, the diagonal entries are at
most $\lambda$.

\begin{theorem}\label{polarextr}
Suppose that  the ellipsoid $\ee$, given by $x^\top M x = n$, is locally
extremal with respect to Conjecture{\em~\ref{ellhyp}}. If the matrix $M$ is not
singular, then the diagonal of $M$ is at most $\1$, and equality holds only if
\[\ee = \sqrt{n} B_2^n.\]
\end{theorem}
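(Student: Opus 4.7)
The plan is to adapt F.~John's argument from Theorem~\ref{johnell}: in John's proof the contact points between $\partial K$ and the maximal-volume ellipsoid satisfy a tensor identity obtained by a separation argument on the space of symmetric matrices, and here I expect the tangent contact points of $\ee$ with the branches of $\hh$ to play exactly the same role, producing a John-type identity from which both $\lam \leq 1$ and the equality characterisation fall out.

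Local extremality forces some branches of $\hh$ to be met only tangentially (otherwise every intersection would be transversal, and one could shrink $\ee$, i.e.\ increase $\lam$, without losing any intersection). At such a tangent contact $u_k$, parallelism of the gradients of $x^\top M x$ and of $\prod x_i$ combined with $u_k^\top M u_k = n$ forces $M u_k = u_k^{-1}$, so $u_k$ is an inverse eigenvector lying on $\hh$. In particular $|\prod_i u_{k,i}| = 1$, and AM--GM delivers the fundamental estimate
\[
\|u_k\|^2 = \sum_i u_{k,i}^2 \;\geq\; n \Bigl(\prod_i u_{k,i}^2 \Bigr)^{1/n} = n.
\]

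Next I would convert extremality into a first-order infeasibility statement. For symmetric $N$ with $\diag(N) = \1$ and small $t>0$, $M+tN$ is a candidate perturbation with strictly larger (constant) diagonal. By the envelope theorem applied to the minimum of $x^\top(M+tN)x - n$ along each tangent branch, the perturbed $\ee_t$ still meets every branch for small $t>0$ exactly when $u_k^\top N u_k \leq 0$ for every tangent contact $u_k$. Local extremality therefore says no symmetric $N$ can satisfy $N_{ii}=1$ for all $i$ together with $u_k^\top N u_k \leq 0$ for all $k$; Farkas' lemma on the space of symmetric matrices then yields non-negative scalars $c_k$, not all zero, and reals $\tilde d_i \geq 0$ with
\[
\sum_k c_k\, u_k u_k^\top \;=\; \diag(\tilde d_1, \ldots, \tilde d_n), \qquad \sum_i \tilde d_i > 0,
\]
the non-negativity of the $\tilde d_i$ coming from the positive semidefiniteness of the left-hand side.

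Multiplying this identity on the left by $M$ and using $Mu_k = u_k^{-1}$, the $i$th diagonal entry of both sides gives $\sum_k c_k = \lam \tilde d_i$, independent of $i$, so $\tilde d_i$ collapses to a common constant $d = \sum_k c_k/\lam>0$ and we obtain the John-type identity $\sum_k c_k u_k u_k^\top = d\, I_n$. Taking traces yields $\sum_k c_k \|u_k\|^2 = nd$ while $\sum_k c_k = \lam d$, so the $c_k$-weighted mean of $\|u_k\|^2$ equals $n/\lam$; combined with $\|u_k\|^2 \geq n$ this forces $\lam \leq 1$. For equality, AM--GM must be sharp at every $u_k$ with $c_k>0$, forcing $|u_{k,i}|=1$ and hence $u_k^{-1}=u_k$, so that $Mu_k=u_k$; the John-type identity then makes $\{u_k\}$ span $\R^n$, giving $M=I_n$ and $\ee=\sqrt{n}\,B_2^n$. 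The delicate point I expect to be the main obstacle is the very first one: arguing cleanly that at a locally extremal $\ee$ the set of tangent contacts is rich enough to make the Farkas step go through, and that second-order phenomena cannot open a loophole in the first-order reformulation of ``meets every branch''. Once this geometric preparation is in place, the Farkas step and the inverse-eigenvector manipulation are essentially formal.
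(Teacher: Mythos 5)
Your argument is correct, and at its core it is the same John-type argument as the paper's: tangential contact points are inverse eigenvectors (this is exactly Lemma~\ref{inversefind}), local extremality is converted into the non-existence of an improving perturbation, and a separation/Farkas argument on symmetric matrices with the trace inner product produces non-negative $c_k$ with $\sum_k c_k\, u_k\otimes u_k$ diagonal, after which the AM--GM bound $\|u_k\|^2\geq n$ for points of $\hh$ and trace bookkeeping yield $\lambda\leq 1$. The differences are worth recording. The paper perturbs by matrices $H$ with zero diagonal (keeping $\diag M=\lambda\,\1$ fixed) and carries along the cone $\pos\{x\otimes x:\ x\in\ker M\}$ to respect positive semi-definiteness; in the nonsingular case that cone collapses to $\{0\}$, which is precisely the simplification you exploit by perturbing with $\diag N=\1$ and ignoring definiteness (legitimate here, since a nonsingular positive $M$ is positive definite and remains so for small $t$). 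Your additional step of multiplying $\sum_k c_k u_k u_k^\top=\diag(\tilde d)$ by $M$ and using $Mu_k=u_k^{-1}$ to force $\tilde d_i\equiv d$, i.e.\ a genuine John identity $\sum_k c_k\,u_k\otimes u_k = d\, I_n$, is not in the paper (which only compares traces), and it pays off in the equality case: the $u_k$ with $c_k>0$ must span $\R^n$ and are fixed by $M$, so $M=I_n$ at once, which is tidier than the paper's route via a contact point in every quadrant followed by an inductive argument.

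Two small points to tighten. First, local extremality only rules out the \emph{strict} system ($\diag N=\1$ and $u_k^\top N u_k<0$ for every contact point): when $u_k^\top N u_k=0$, second-order terms can still push the branch minimum above $n$, so your formulation with non-strict inequalities is not directly justified. This is harmless, because infeasibility of the strict system already yields your dual certificate: if no non-trivial $c_k\geq 0$ made $\sum_k c_k u_k\otimes u_k$ diagonal, one could strictly separate the compact set $\conv\{u_k\otimes u_k\}$ from the diagonal subspace, obtaining $H$ with zero diagonal and $u_k^\top H u_k<0$ for all $k$, and then $N=I_n+sH$ with $s$ large is admissible --- exactly the separation used in the paper. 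Second, your closing worry about the contact set being ``rich enough'' is unfounded: the argument is a contrapositive (no certificate implies an improving perturbation), and for nonsingular $M$ the contact points are automatically finite in number, since each is an inverse eigenvector and a nonsingular positive matrix has at most one inverse eigenvector per quadrant.
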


\begin{proof}
Assume that the ellipsoid $\ee$ given by $x^\top M x = n$ meets every branch of
$\hh$, the diagonal of $M$ is of the form $\lambda \, \1$ for some $\lambda$,
and it is locally extremal among such ellipsoids. Let $(u_i)_1^m$ be the set of
contact points between $\ee$ and $\hh$, that is, the collection of the discrete
points of $\hh \cap \ee$. (To visualise, the $u_i$ are contained in the
quadrants where $\ee$ does not ``reach over'' $\hh$.) Note that by
Lemma~\ref{inversefind}, the $u_i$ are inverse eigenvectors of $M$.

The extremality condition yields that for any real, symmetric, $n \times n$
matrix $H$ with $\mathbf 0$ as diagonal and for any positive number $\delta >0$
one of the following is violated:
\begin{itemize}
\item[(a)] the matrix $M + \delta H$ is positive semi-definite;
\item[(b)] $u_i^\top (M + \delta H) \, u_i < n $ for every $i=1, \dots, m$.
\end{itemize}
Since $u_i^\top M u_i = n $ for each $i$, (b) is equivalent to $u_i^\top H u_i
<0 $, for every $i$. Also, if $M + \delta H$ is not positive semi-definite,
then there exists an $x \in B_2^n$, for which $x^\top (M + \delta H)\, x <0$.
This, by compactness, implies, that if for a fixed matrix $H$, the matrix $M +
\delta H$ is not positive semi-definite for any positive $\delta$, then there
exists a point $x \in B_2^n$ for which $x^\top (M + \delta_k H) \, x <0$ for a
sequence $\delta_k \rightarrow 0$. Since $\delta_k x^\top H x \rightarrow 0$,
we necessarily have $x^\top M x =0$.

Therefore, the following holds true:
\bigskip

\noindent {\em There is no real, symmetric, $n \times n$ matrix $H$ with $0$'s
on the diagonal, for which
\[
u_i^\top H u_i <0
\]
for every $i$, and
\[
 x^\top H x \geq 0.
\]
for any vector $x \in \ker M$.  }
\bigskip

This formulation clearly shows the right direction: separation of convex
domains. Introduce the inner product on the space of $n\times n$ real matrices
by
\begin{equation}\label{minner}
\la M, H \ra = \tr (M H) = \sum_{i,j}m_{ij}h_{ij}.
\end{equation}
Sometimes, this is called trace duality. Clearly,
\[
u^\top H u = \la H, u \otimes u \ra.
\]
Note that since $u \otimes u $ is symmetric, we can drop the symmetry condition
on $H$.  Therefore we obtain that for any extremal matrix $M$, the positive
cones
\[
\pos \{ u \otimes u: u \in \R^n \textrm{ is a contact point between } \hh
\textrm{ and } \ee \}
\]
and
\[
\pos \{ x \otimes x: x \in \ker M \}
\]
are not separable with a linear functional of the form $A \rightarrow \la A, H
\ra$, whose kernel contains all the matrices $e_i \otimes e_i$. This implies
that there is a matrix $K$ and a diagonal matrix $D$ such that
\[ K \in \pos \{ x \otimes x: x \in \ker M \}
\]
and
\[
K + D \in \conv \{ u \otimes u: u \in \R^n \textrm{ is a contact point between
} \hh \textrm{ and } \ee \}.
\]
Since for every $x \in \ker M$, we have $\la M, x \otimes x \ra =0$,
\begin{equation}\label{mk}
\la M, K \ra =0.
\end{equation}
On the other hand, let
\begin{equation}\label{kd}
K + D = \sum _{i=1}^m c_i u_i \otimes u_i
\end{equation}
with $\sum c_i =1$, and $c_i \geq 0 $. Then, since $u_i^\top M u_i =n$ for
every $i$,
\[
\la M, K+D \ra = \sum c_i \la M, u_i \otimes u_i \ra = n.
\]
Comparing to (\ref{mk}), we obtain that
\begin{equation}\label{trd}
\la M, D \ra = \lambda \, \tr D = n.
\end{equation}
Now, if $M$ is not singular, then $\ker M = \{ \mathbf 0 \}$, hence $K =
\mathbf 0 \otimes \mathbf 0$. Since $u_i \in \hh$ for every~$i$,
\[
\tr u_i \otimes u_i =  \| u_i \|^2 \geq n,
\]
and therefore, from (\ref{kd}) we obtain that
\[ \tr D \geq n.\]
Comparing this to (\ref{trd}) yields that $\lambda \leq 1$, which is the
desired inequality. If equality holds, then $ \| u_i \|^2 = n $ for every $i$,
which yields that the contact points are vertices of the cube $\{-1, 1 \}^n$.
On the other hand, $\ee$ is compact, and therefore there exists a contact point
in all the quadrants; hence, $\ee$ contains all the points whose coordinates
are $1$ or $-1$. From this, by an inductive argument, it easily follows that
$\ee = \sqrt{n} B_2^n$.
\end{proof}

Theorem~\ref{polarextr} was first proved by Leung, Li and Rakesh \cite{leung}.
If $M$ is degenerate, then the above proof does not go through: by~(\ref{trd}),
we would have to estimate the trace of $D$, and hence, by~(\ref{kd}), it would
be necessary to determine the ``diagonal distance'' of the matrices $u_i
\otimes u_i$ from the positive cone $\pos \{ x \otimes x: x \in \ker M \}$. On
the other hand, this depends on the size of the diagonal of $M$.

It might be possible to rule out the lower dimensional extremal cases by a
different argument, although our efforts in this direction have  not been
successful yet.

In our opinion, the condition $\diag M = \lambda \, \1$ is too strong and too
weak at the same time. It is too strong, because it limits the possible
modifications of the ellipsoid too much. On the other hand, it is too weak to
rule out the lower dimensional extremal cases: if the number of dimensions is
large, then the fixed diagonal provides almost no information about the kernel
of $M$, or, what is the same, the infinite axes of $\ee$. In view of these
facts, a relaxation on the diagonal condition (and hence, posing a stronger
problem) may be fruitful. We shall see in the next section, that the similar
statement derived from the strong polarization problem indeed provides such an
option.

\section{The strong polarization problem}\label{strongpolarsec}

In this section, we transform the strong polarization problem to a geometric
setting, and obtain a characterisation result for the locally extremal systems;
however, this is not explicit enough  to actually determine these systems. We
also give a new proof for the planar case. The failure of the previous proof
for the lower dimensional extremal cases indicate that we have to pay special
attention to these.

Recall that according to Conjecture~\ref{eigenball}, we have to prove that any
$n \times n $ real Gram matrix of a system of unit vectors has an inverse
eigenvector in the ball $\sqrt{n} B_2^n$. In order to handle this problem in a
way that is similar to the previous section, we transform it once more. To this
end, we have to extend the definition of inverse matrices to singular matrices.

\begin{defi}
Let $M$ be a real, symmetric, positive semi-definite $n \times n$ matrix with
eigen-decomposition
\[
M = E D E^\top,
\]
where $D$ is a diagonal matrix, with the eigenvalues $\lambda_1, \dots,
\lambda_n$ of $M$, as diagonal entries.

The {\em generalised inverse} of $M$ is given by
\[
M^{-1} = E D^{-1} E^\top,
\]
where $D^{-1} = \textrm{diag} (1/\lambda_1, \dots, 1/\lambda_n)$ with the
convention that $1/0$ is understood as an abstract symbol $\infty$.
\end{defi}

If $M$ is singular with image space $H$, then $M^{-1}$ maps $H$ onto itself,
and for any $x\in H$, we have $M^{-1}\,M\,x = x$. If $x \notin H$, then the
$\infty$ symbol does not cancel in $M^{-1} x $, and we define  $M^{-1} x $ to
be $\infty$.

If $M$ is not singular, then there is a natural bijection between the inverse
eigenvectors of $M$ and $M^{-1}$: if $\alpha$ is an inverse eigenvector of $M$,
then according to (\ref{inverseigen}),
\[
M \, \alpha  = \alpha^{-1},
\]
and hence,
\[
M^{-1} \alpha^{-1} = \alpha,
\]
therefore $\alpha^{-1}$ is an inverse eigenvector of $M^{-1}$. Clearly, this is
reversible, giving a bijection.

If $M$ is singular, then $M^{-1}$ has no inverse eigenvectors in general.
However, the geometric definition extends to this case as well: Let $\ee$ be
the ellipsoid defined by the equation $x ^\top M x  = n$, and let
\[
\ee^*  = \{ x \in \R^n :  x^\top M^{-1} x  = n \}
\]
be its {\em dual ellipsoid}: the polar with respect to $\sqrt{n} B_2^n$. If $M$
is singular, then $\ee^*$ is lower dimensional: $\dim \ee^* = \rk M$. If $u$ is
a contact point between $\ee$ and $\hh$, then by Lemma~\ref{inversefind},
$u$~is an inverse eigenvector of $M$. It simply follows by an approximation
argument, that regardless of the dimension of $\ee^*$, $u^{-1}$ is a contact
point between $\ee^*$ and $\hh$, and this gives a bijection between contact
points of $\ee$ and $\ee^*$.

Lemma~\ref{inversefind} finds the inverse eigenvectors by maximising $|\prod
x_i|$ on $\ee$.  Now, if $M$ is singular, then $\ee$ is not compact, and the
maximum in question is $\infty$.  Therefore, we rather would like to find the
contact points of $\hh$ and $\ee^*$, as the latter  is always compact.

Assume that $x \in \ee^*$, and $|\prod x_i |$ is locally maximal. Then $x^{-1}$
is an inverse eigenvector of $M$. Recall that for two vectors $y,z \in \R^n$,
their product is taken coordinate-wise. For any $y \in \hh$, we have $|\prod
(yx)_i | = |\prod x_i|$, and thus by the maximality condition, for any $y \in
\hh$ in a sufficiently small neighbourhood of $\1$,
\begin{equation}\label{xym}
(yx)^\top M^{-1} (yx) \geq n.
\end{equation}
Define the matrix $\widetilde M$ by $(\widetilde M)_{ij} = (M)_{ij}/(x_i x_j)$.
It is easy to check that $\widetilde M$ is a positive matrix as well, and its
inverse is given by
\[
(\widetilde M) ^{-1}_{ij} = M^{-1}_{ij} x_i x_j.
\]
Therefore, (\ref{xym}) is equivalent to that for every $y\in \hh$ in a
neighbourhood of $\1$,
\[
y^\top (\widetilde M)^{-1} y \geq n.
\]
If $\diag M = \1$, then
\[
\tr \widetilde M = \sum \frac {1}{x_i^2} = \|x^{-1}\|^2.
\]
Moreover, since $x^{-1}$ is an inverse eigenvector of $M$,
\[
(\widetilde M \, \1)_i = \frac{(M x^{-1})_i}{x_i}  = 1
\]
for every $i$, and hence $\widetilde M \, \1 = \1$.

It would suffice to prove that there exists a contact point of $\ee^*$, for
which the matrix $\widetilde M$ derived in the above way has trace at most $n$.
The natural choice for this contact point is the global maximiser of the
product norm. Then (\ref{xym}) holds for every $y \in \hh$, meaning that the
ellipsoid given by $y^\top (\widetilde M)^{-1} y = n$ is inside $\hh$. Note
that instead of working with the original pair of ellipsoids $\ee$ and $\ee^*$,
we search for a new, different pair.

We shall write $\ee \subset \inte \hh$, if the intersection of $\ee$ and $\hh$
consists of discrete points only. Also, for the sake of simplicity, from now
on, we simply write $M$ for the matrix $\widetilde M$, and $\ee$ and $\ee^*$
for the dual pair of the ellipsoids defined by $y^\top \widetilde M y $ and
$y^\top (\widetilde M)^{-1} y$. The strong polarization problem then follows
from the next statement.

\begin{conj}\label{mconj}
Assume that the positive matrix $M$ satisfies $M \1 = \1$, and that for every
$y \in \hh$, $y ^ \top M^{-1} y \geq n$. Then $\tr M \leq n$.
\end{conj}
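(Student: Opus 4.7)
My plan is to adapt the F.~John--style separation argument used in the proof of Theorem~\ref{polarextr} to this dual setting. First I would reduce to a locally extremal $M$ for $\tr M$ subject to the hypotheses, assuming compactness/existence as a preliminary step. Let $u_1,\ldots,u_m$ be the contact points of the ellipsoid $\ee^*=\{y:y^\top M^{-1}y=n\}$ with $\hh$; Lemma~\ref{inversefind} applied to $M^{-1}$ gives $M^{-1}u_i=u_i^{-1}$, so writing $w_i:=u_i^{-1}$ one has $Mw_i=u_i$, and $w_i\in\hh$ so by AM--GM $\|w_i\|^2\geq n$.

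Next I analyse admissible perturbations $M+\delta H$. Preserving $M\1=\1$ requires $H\1=0$; expanding $(M+\delta H)^{-1}=M^{-1}-\delta M^{-1}HM^{-1}+O(\delta^2)$ and using $M^{-1}u_i=w_i$, the containment constraint $u_i^\top(M+\delta H)^{-1}u_i\geq n$ linearises to $\la H,w_i\otimes w_i\ra\leq 0$. Local maximality of $\tr M$ then excludes any such $H$ with $\tr H>0$. By convex separation (Farkas lemma) under the trace inner product~\eqref{minner}, I obtain a John-type decomposition
\begin{equation*}
I=\sum_{i=1}^m c_i\,w_i\otimes w_i+\tfrac12\sum_{k=1}^n\lambda_k\bigl(e_k\otimes\1+\1\otimes e_k\bigr),
\end{equation*}
with $c_i\geq 0$ and $\lambda_k\in\R$.

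The crux is to pair this identity with $M$ via trace duality. Using $\la M,\tfrac12(e_k\otimes\1+\1\otimes e_k)\ra=(M\1)_k=1$ and $w_i^\top Mw_i=\la w_i,u_i\ra=n$, I get $\tr M=n\sum_ic_i+\sum_k\lambda_k$. Taking the trace of the identity itself yields $n=\sum_ic_i\|w_i\|^2+\sum_k\lambda_k$. Subtracting,
\begin{equation*}
\tr M-n=-\sum_i c_i\bigl(\|w_i\|^2-n\bigr)\leq 0,
\end{equation*}
since $\|w_i\|^2\geq n$ and $c_i\geq 0$, which gives $\tr M\leq n$.

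The main obstacle I anticipate is the singular case $\ker M\neq\{0\}$---precisely the case that blocked Theorem~\ref{polarextr}. Positivity of $M+\delta H$ is no longer automatic, so the admissibility cone gains the extra constraint that $H$ be positive semi-definite on $\ker M$, and the separation argument produces an additional term $K\in\pos\{x\otimes x:x\in\ker M\}$ in the decomposition of $I$. Fortunately $\la M,K\ra=0$ (as $Mx=0$ on $\ker M$) and $\tr K\geq 0$ (by positivity of $K$), so the same computation extends: $\tr M-n=-\sum_ic_i(\|w_i\|^2-n)-\tr K\leq 0$. This cleaner behaviour of the singular term---which was a genuine obstruction in the polarization setting---is precisely why the strong polarization conjecture is geometrically better adapted to the separation approach, as hinted at the end of Section~\ref{polarsec}. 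A minor remaining point is ruling out the no-contact-point case: the direction $H=I-\tfrac1n\1\1^\top$ satisfies $H\1=0$ and $\tr H=n-1>0$, so without contact points local maximality is violated.
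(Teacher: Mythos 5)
Your non-singular argument is sound in outline, but it is essentially the paper's own route in disguise: the decomposition of $I$ with multiplier terms $\tfrac12(e_k\otimes\1+\1\otimes e_k)$ for the constraint $M\1=\1$, paired once with $M$ and once with $I$, is an unprojected version of Theorem~\ref{strongpolarextr} combined with the trace argument of Corollary~\ref{stpolfull}, and like that corollary it covers only locally extremal, \emph{non-singular} $M$. The statement is a conjecture precisely because of the singular case, and there your fix contains a sign error. Admissibility of $M+\delta H$ forces $x^\top Hx\ge 0$ for $x\in\ker M$; dualising that inequality places the kernel term on the \emph{other} side of the identity, namely
\begin{equation*}
I + K=\sum_{i=1}^m c_i\,w_i\otimes w_i+\tfrac12\sum_{k=1}^n\lambda_k\bigl(e_k\otimes\1+\1\otimes e_k\bigr),\qquad K\in\pos\{x\otimes x:\ x\in\ker M\},
\end{equation*}
not $I=\cdots+K$. (Toy check with generators $a=e_1\otimes e_1$ for the contact constraint and $b=e_2\otimes e_2$ for the kernel constraint: $H=\diag(0,1)$ satisfies $\la H,a\ra\le0$, $\la H,b\ra\ge0$, $\tr H>0$, and only a certificate of the form $I=c\,a-K$ excludes such an $H$; a decomposition $I=c\,a+K$ certifies nothing.) Redoing your two pairings with the correct sign gives $\tr M-n=-\sum_i c_i\bigl(\|w_i\|^2-n\bigr)+\tr K$, so $\tr K\ge0$ now works \emph{against} you and cannot be discarded. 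This is exactly the obstruction acknowledged at the end of Section~\ref{strongpolarsec}, and the same phenomenon that blocks Theorem~\ref{polarextr} for degenerate $M$; your claim that the singular term behaves ``cleanly'' rests on the wrong sign.

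Two further gaps compound this. At singular $M$ the expansion $(M+\delta H)^{-1}=M^{-1}-\delta M^{-1}HM^{-1}+O(\delta^2)$ is not valid: the generalised inverse is not even continuous in rank-changing directions, the perturbed ellipsoid $\ee^*$ becomes full-dimensional and bulges by order $\sqrt\delta$ into $\ker M$, and new near-contact points can be created there, so the linearised cone you dualise is not the true cone of admissible directions. Moreover, the preliminary reduction ``assume a locally extremal $M$ exists'' is not free: boundedness of $\tr M$ on the feasible set is essentially the assertion to be proved, and even after a truncation a trace-maximiser may well be singular --- singular equality configurations genuinely exist (cf.\ Proposition~\ref{extremalcases} and the remarks on lower-dimensional extremal systems), so the degenerate case cannot be sidestepped and must be handled sharply. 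As it stands, your proposal recovers the known non-singular characterisation but does not close the conjecture.
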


The condition $M \1 = \1$ means that $\1$ is a contact point between $\ee$ and
$\hh$. Some condition of this type is clearly necessary, since diagonal
matrices with diagonal of the form  $(a, 1/a, \dots, 1/a)$, where $a$ is a
large positive number,  can have arbitrarily large trace, without $\ee^*$
intersecting $\hh$.

Note, that the contact points between $\ee$ and $\hh$ represent the inverse
eigenvectors of the original Gram matrix: we obtain them by multiplying the
original contact points with the inverse of the original maximiser $x$.

The quantity $\tr M$ has a plainly geometric interpretation: if the axes of
$\ee^*$ are of length $a_1, \dots, a_n$, where $a_i \geq 0$, then
\[
n \, \tr M = \sum a_i^2.
\]

A related result of K. Ball and M. Prodromou \cite{ballprod} states in the
present situation that for any positive matrix $M$ of trace $n$, there is a
vertex of the cube $\{-1,1\}^n$ that is not contained in $\ee$. This, by
duality, implies, that if $\tr M \geq n$, then $\ee^*$ is not contained in the
scaled copy of the $n$-dimensional cross-polytope, which contains the vertices
of $\{-1,1\}^n$ on its boundary.

Next, we give a proof for the planar case of the strong polarization problem,
by proving the above statement in the case when the rank of $M$ is 2.

\begin{proof}[Proof of Conjecture~\ref{mconj} in the case of $\dim \ee^*=2$]
The condition $M \1 = \1$ implies that one axis of $\ee^*$ is $\1$. Let $v$
denote the other axis; then $v \perp \1$, that is,
\begin{equation}\label{vi}
\sum v_i =0.
\end{equation}
 The goal is to show that $\|v \|^2 \geq n(n-1)$. It suffices to
prove, that for any vector $v\ \in \R^n$ with $\|v \|^2 = n(n-1)$, there exists
an angle $\varphi \in [0,2 \pi]$, for which
\[
\left| \prod_{i=1}^n (v_i \sin \varphi + \cos \varphi)  \right| \geq 1.
\]
Let $f_v(\varphi) = \prod (v_i \sin \varphi + \cos \varphi)$. Then $f(\varphi)$
is a trigonometric polynomial of degree at most $n$. Expanding the product and
using (\ref{vi}), we obtain that
\begin{equation}\label{fv}
f_v(\varphi) = \cos^n \varphi + \cos^{n-2}\varphi\, \sin^2 \varphi \sum_{i \neq
j}v_i v_j + \sin^3 \varphi\, Q(\varphi),
\end{equation}
where $g(\varphi)$ is a trigonometric polynomial of degree $\leq n-3$. We
proceed as in the proof of the extremality of the Chebyshev polynomials. Again
by (\ref{vi}),
\[
n(n-1) = \|v \|^2 = \sum_{i \neq j}v_i v_j,
\]
which implies that the first two terms in the expansion of $f_v(\varphi)$ are
independent of $v$. Now, from Section~\ref{remarkscomplexsec} we know, that if
for the original vector system $(u_i)$, $(\pm u_i)$ is equally distributed on
the unit circle, then there are $2n$ contact points between $\ee^*$ and $\hh$.
This can also be checked by a straightforward calculation. Let $v_0$ be the
axis in this extremal case; then $f_{v_0}(\varphi)$ is equioscillating between
$1$ and $-1$ of order $n$ (and it is a multiple of the Chebyshev polynomial).
Assume that for a vector $v$, $\| f_v(\varphi) \|_\infty < 1$. Then
$f_{v_0}(\varphi) - f_v(\varphi)$ has at least $2n$ zeroes, $2n-4$ of which are
different from $0$ or $\pi$. On the other hand, by (\ref{fv}), it can be
written as $\sin^3 \varphi\, h(\varphi)$ for a trigonometric polynomial
$h(\varphi)$ of degree at most $n-3$. Since $h(\varphi)$ can have at most $2n
-6$ zeroes, the difference $f_{v_0}(\varphi) - f_v(\varphi)$ can have at most
$2n-2$ zeroes, and thus $f_{v_0}(\varphi) \equiv f_v(\varphi)$.
\end{proof}

The proof also shows that for the extremal vector systems $(u_i)$, the set
$(\pm u_i)$ is equally distributed on $T$. Also, the ease with which we have
obtained the result compared to Section~\ref{planarcasesec}, well illustrates
the advantage of this form of the strong polarization conjecture over the
original formulation.

As the main result of the section, we prove a characterisation result for the
extremal cases, that is similar to Fritz John's theorem. First, we get rid of
the condition $M \, \1 = \1$. Whenever this holds, $\1$ is an axis of $\ee^*$.
Therefore the ``scaled projection'' $P: \R^n \rightarrow \R^n$ to the
orthogonal subspace of $\1$, defined by
\[
P(x) = \left( x - \1 \frac {\la x, \1 \ra}{n}\right) \frac {1}{\sqrt{1 - (\la
x, \1 \ra/ n)^2}}\, ,
\]
maps $\ee^*$ into $\ee^* \cap \1^{\perp}$. The condition that  $\ee^* \subset
\inte \hh$ is equivalent to $P(\ee^*) \subset \inte P(\hh)$. Clearly, this
projection preserves the duality of $\ee$ and $\ee^*$: the ellipsoids $P(\ee)$
and $P(\ee^*)$ are polar to each other in $\1^\perp$ with respect to $\sqrt{n}
B_2^{n-1}$. Further, the inverse image under $P$ of any symmetric ellipsoid
that is inside $P(\hh)$ is an ellipsoid that satisfies the conditions of
Conjecture~\ref{mconj}.

 Let $P(M)$ be the matrix, for which
\[
P(\ee) = \{ x \in \R^n: x^\top P(M) x = n \};
\]
then
\[
P(M) = M- \frac{ \1 \otimes \1}{n}.
\]
Similarly, $P(M^{-1}) = M^{-1} - \1 \otimes \1 /n$, is the matrix of $\ee^*$.
If $M$ is not singular, then $P(M)$ and $P(M^{-1})$ are inverses of each other
in the sense that
\begin{equation}\label{pminv}
P(M)\, P(M^{-1}) = I_n - \frac{\1 \otimes \1}{n},
\end{equation}
 where the matrix on
the right hand side is the matrix of the projection onto~$\1^\perp$.

The goal is to prove that for any ellipsoid that is inside $P(\hh)$, the sum of
the squares of the axis-lengths is at most $n(n-1)$; that is, the trace of the
matrix of the dual ellipsoid in $\1^\perp$ is at most $n-1$. The advantage is
that at this point, there is no extra assumption on the ellipsoid; on the other
hand, the geometric structure of $P(\hh)$ is more complicated than that of
$\hh$; see the beautiful object on  Figure~\ref{project}.

If $u$ is a contact point between $\ee$ and $\hh$, then $u$ is an inverse
eigenvector of $M$, that is, $M u = u^{-1}$. Now, if $u$ and $v$ are contact
points, then
\[
\la u, v^{-1} \ra = \la u, M v \ra = \la M u, v \ra = \la u^{-1}, v \ra,
\]
because $M$ is symmetric. In particular, choosing $v = \1$, we obtain that for
any contact point $u$,
\begin{equation}\label{u1}
\la u, \1 \ra = \la u^{-1}, \1 \ra.
\end{equation}
If $u$ is a contact point between $\ee$ and $\hh$, then $P(u)$ is a contact
point between $P(\ee)$ and $P(\hh)$, $u^{-1}$ is a contact point between
$\ee^*$ and $\hh$, and $P(u^{-1})$ is a contact point between $P(\ee^*)$ and
$P(\hh)$. A straightforward calculation, using the equation $\la u, u^{-1} \ra
= n$, and (\ref{u1}), reveals that
\[
\la P(u), P(u^{-1}) \ra = n.
\]

\begin{figure}[h]
\epsfxsize =0.7\textwidth \centerline{\epsffile{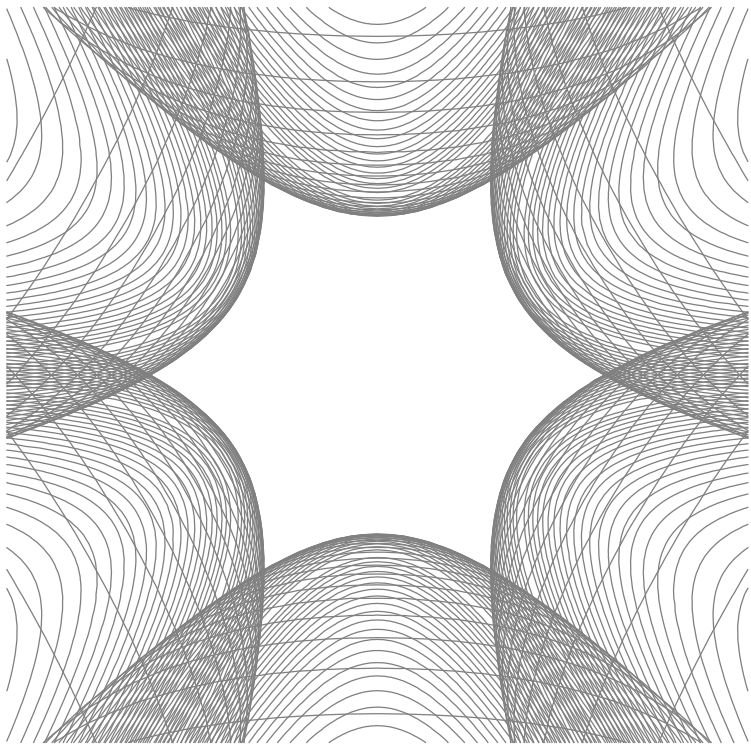}}
  \caption{$P(\hh)$ in the 3-dimensional case}
  \label{project}
\end{figure}

 Local extremality with respect to Conjecture~\ref{mconj} is defined
usually. Locally extremal matrices (and ellipsoids) are characterised via the
following theorem; note that the result holds for lower dimensional extremal
cases as well.

\begin{theorem}\label{strongpolarextr}
Assume that the ellipsoid $\ee^*$, given by $x^\top M^{-1} x = n$, is locally
extremal with respect to Conjecture{\em~\ref{ellhyp}}. Let $(u_i^{-1})_1^m$ be
the set of contact points between $\ee^*$ and $\hh$. Then there exist a series
of positive numbers $(c_i)_1^m$, such that
\begin{equation}\label{pm}
P(M)=\sum_1^m c_i \, P(u_i) \otimes P(u_i^{-1}).
\end{equation}
\end{theorem}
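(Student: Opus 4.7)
The plan is to run a trace-duality / Farkas argument parallel to the proof of Theorem~\ref{polarextr}, with the diagonal constraint $\diag M = \lambda\,\1$ there replaced by the single-vector constraint $M\1 = \1$ here, and then recover the asymmetric tensor $P(u_i)\otimes P(u_i^{-1})$ via a single right-multiplication by $M$ followed by sandwiching with the projection onto $\1^\perp$.

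Consider symmetric perturbations $M \mapsto M + tH$ and record the constraints that local extremality imposes on $H$.  Preserving $M\1 = \1$ forces $H\1 = 0$; preserving positive semi-definiteness forces $x^\top H x \geq 0$ for every $x \in \ker M$; and preserving $y^\top (M')^{-1}y \geq n$ for every $y \in \hh$ near each contact point $y = u_i^{-1}$ amounts, by the envelope theorem combined with $M^{-1}u_i^{-1} = u_i$, to the first-order condition $u_i^\top H u_i \leq 0$ (the first-order change of the relevant minimum equals $-t\,u_i^\top H u_i$).  Local extremality of $\tr M$ therefore says that $\tr H \leq 0$ for every such $H$.

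Now work in the Euclidean space of real symmetric $n\times n$ matrices under trace duality $\la A,B\ra = \tr(AB)$.  A short calculation shows that, within this space, the annihilator of $\{H : H\1 = 0\}$ consists precisely of matrices of the form $a \otimes \1 + \1 \otimes a$ with $a\in\R^n$.  Farkas' lemma therefore yields nonnegative scalars $c_i, d_j$, a spanning set $(x_j)$ of $\ker M$, and a vector $a\in\R^n$ satisfying
\begin{equation*}
I \;=\; \sum_{i=1}^m c_i\, u_i\otimes u_i \;-\; \sum_j d_j\, x_j\otimes x_j \;+\; a\otimes \1 + \1\otimes a.
\end{equation*}
Right-multiplying by $M$ and using $Mu_i = u_i^{-1}$, $Mx_j = 0$ and $M\1 = \1$ collapses this to
\begin{equation*}
M \;=\; \sum c_i\, u_i\otimes u_i^{-1} \;+\; a\otimes \1 + \1\otimes (Ma).
\end{equation*}
Finally, sandwiching both sides with $P_{\1^\perp} = I - \1\otimes\1/n$ turns the left side into $P(M)$ (because $M\1 = \1$), annihilates both rank-one correction terms on the right (because $P_{\1^\perp}\1 = 0$), and converts each $u_i\otimes u_i^{-1}$ into $P(u_i)\otimes P(u_i^{-1})$, giving the claimed identity.

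The decisive advantage over the argument of Theorem~\ref{polarextr} is that the $\ker M$-contribution $\sum d_j\,x_j\otimes x_j$ in the Farkas identity is annihilated by the right-multiplication by $M$, so the conclusion holds regardless of the rank of $M$, including the degenerate cases that obstructed the earlier proof.  The main delicate technical point I expect is the variational step at the contact points: one needs to justify rigorously that the motion of the minimizer along $\hh$ under perturbation of $M$ contributes only at second order, so that the first-order change in $\min_{y\in\hh} y^\top (M')^{-1}y$ really is $-t\,u_i^\top H u_i$.
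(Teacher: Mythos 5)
Your argument is correct in outline, but it takes a genuinely different route from the paper. The paper first passes to the projected picture: local extremality of $\ee^*$ is transferred to local extremality of $P(\ee^*)$ among \emph{all} centred ellipsoids inside $P(\hh)$ (the constraint $M\1=\1$ disappears in the projection), and the perturbation is the linear deformation $A\mapsto (I+\delta H)A$ of $P(\ee^*)=A\,B_2^n$ with an \emph{arbitrary}, not necessarily symmetric $H$. Positive semi-definiteness of the perturbed matrix is then automatic, the only first-order constraints are the tangent-hyperplane conditions $\la P(u_i),HP(u_i^{-1})\ra\le 0$ at the contact points, and the trace-duality separation is carried out directly against the finitely many matrices $P(u_i)\otimes P(u_i^{-1})$; no kernel or diagonal correction terms ever appear, which is precisely how the paper covers singular $M$. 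You instead stay upstairs: symmetric perturbations of $M$ with $H\1=0$, a Farkas identity for the identity matrix involving $\pos\{u_i\otimes u_i\}$, $\pos\{x\otimes x:\ x\in\ker M\}$ and the annihilator $\{a\otimes\1+\1\otimes a\}$, followed by an algebraic clean-up (right multiplication by $M$, sandwiching with $I-\1\otimes\1/n$) that annihilates the corrections. Both roads lead to \eqref{pm}; yours produces a completely explicit Farkas certificate and makes the degenerate case transparent (the kernel generators die because $Mx_j=0$), while the paper's buys a much lighter variational step and a finitely generated, hence closed, cone in the separation.

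Two points in your sketch need repair. First, the sandwich yields $(P_{\1^\perp}u_i)\otimes(P_{\1^\perp}u_i^{-1})$ with the orthogonal projection $P_{\1^\perp}=I-\1\otimes\1/n$, not $P(u_i)\otimes P(u_i^{-1})$ with the paper's \emph{scaled} projection $P$; you must invoke \eqref{u1}, $\la u_i,\1\ra=\la u_i^{-1},\1\ra$, to see that the two rank-one matrices differ by the common positive factor $1-(\la u_i,\1\ra/n)^2$, which is then absorbed into $c_i$. Second, the variational step is heavier upstairs than you indicate: to contradict extremality you need the \emph{sufficiency} of your first-order conditions, i.e.\ that a symmetric $H$ with $H\1=0$, $u_i^\top Hu_i<0$ and suitable behaviour on $\ker M$ makes $M+tH$ admissible for small $t>0$. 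This requires (i) preservation of positive semi-definiteness, where $x^\top Hx\ge 0$ on $\ker M$ is necessary but not sufficient, so one needs the compactness argument from the proof of Theorem~\ref{polarextr} (or strict positivity on $\ker M\setminus\{0\}$ together with closedness of that cone), and (ii) the inequality $y^\top(M+tH)^{-1}y\ge n$ on all of the noncompact set $\hh$, with a generalised inverse whose rank may change under the perturbation; this is handled by the growth of the quadratic form at infinity, the positive margin away from the contact points, and the convexity of the branches of $\hh$ near them, but it is more than the single envelope computation you flag. With these repairs your proof goes through and gives the statement in the same generality as the paper's.
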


\begin{proof}
If $\ee^*$ is locally extremal, then $P(\ee^*)$ is locally extremal in $P(\hh)$
as well. The set of contact points between $P(\ee^*)$ and $P(\hh)$ are
$(P(u_i^{-1}))$. Moreover, the duality relation implies that the normal
direction to $\hh$ at $u_i$ is parallel to $u_i^{-1}$, and accordingly, the
normal to $P(\hh)$ at $P(u_i)$ is parallel to $P(u_i^{-1})$.

We are going to use a different optimisation argument as in the proof of
Theorem~\ref{polarextr}; the advantage is that this approach automatically
works for lower dimensional extremal ellipsoids as well, therefore, there will
be fewer assumptions on the separating matrix $H$. Moreover, $H$ will not be
required to be symmetric either.

 In order to move the ellipsoid $P(\ee^*)$ so that it stays in
$P(\hh)$, the original contact points cannot cross the local separating
hyperplanes. Since the matrix $P(M)$ is symmetric and positive semi-definite,
it has a symmetric, positive semi-definite square-root $A$, for which $P(\ee^*)
= A \, B_2^n$. Let $H$ be an arbitrary $n \times n$ matrix. We modify the
matrix $A$ to $(I + \delta H)A$, where $\delta
>0$. The image of the contact point $P(u_i^{-1})$ is $P(u_i^{-1}) + \delta H
P(u_i^{-1})$. Then $P(M)$ is modified to
\[
((I + \delta H)A)^2 = P(M) + \delta H P(M) + \delta A H A + \delta^2 HAHA.
\]
The derivative of the above expression with respect to $\delta$ at $\delta = 0$
is $H P(M) + AHA$. Therefore, if the trace increases for small $\delta$, then
\[
\tr H P(M) + \tr A H A = 2 \tr H P(M) > 0.
\]
On the other hand, if the image of $P(\ee^*)$ stays in the bounding box
determined by the separating hyperplanes at the contact points, then for every
$i$,
\[
\la  P(u_i), P(u_i^{-1}) + \delta H P(u_i^{-1})\ra \leq \la P(u_i), P(u_i^{-1})
\ra = n.
\]
Therefore, if $\ee^*$ is locally extremal, then there is no real $n \times n$
matrix $H$, for which
\[
\la P(u_i) , H P(u_i^{-1})\ra \leq 0
\]
for every $i$, and
\[
\tr H P(M)>0.
\]
The only fact we need is that for two vectors $x,y \in \R^n$,
\[
\la H x, y \ra = \la H, x \otimes y \ra,
\]
where the inner product of matrices is defined by  (\ref{minner}). Therefore,
$P(M)$ is not separable from the matrices $P(u_i) \otimes P(u_i^{-1})$ by a
linear functional, which implies that
\[
P(M) \in \pos \{ P(u_i) \otimes P(u_i^{-1})\}\,. \qedhere
\]
\end{proof}

\noindent Note that in the above representation of $P(M)$, the traces of the
matrices are $n$:
\[
\tr P(u_i) \otimes P(u_i^{-1}) = \la P(u_i),P(u_i^{-1})\ra =n.
\]
Hence it would suffice to show that for the coefficients in (\ref{pm}),
\[\sum c_i \leq \frac{n-1}{n}\,.\]

If $M$ is not singular, then we obtain the  result which is analogous to
Theorem~\ref{polarextr}.

\begin{cor}\label{stpolfull}
The only non-degenerate ellipsoid $\ee^*$, that is locally extremal with
respect to Conjecture{\em~\ref{mconj}}, is the unit ball $B_2^n$.
\end{cor}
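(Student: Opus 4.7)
The plan is to apply Theorem~\ref{strongpolarextr} to obtain the representation~(\ref{pm}), and then extract the bound $\tr M \leq n$ (together with its equality case) by computing $\tr(M^{-1} P(M))$ in two different ways. Non-singularity of $M$ makes $M^{-1}$ well-defined in the usual sense, and $M \1 = \1$ yields $M^{-1} \1 = \1$. On the one hand,
\[
M^{-1} P(M) = M^{-1}\bigl(M - \1 \otimes \1 / n\bigr) = I_n - \1 \otimes \1 / n,
\]
so $\tr(M^{-1} P(M)) = n - 1$. On the other hand, for each summand of~(\ref{pm}),
\[
\tr\bigl(M^{-1} \cdot P(u_i) \otimes P(u_i^{-1})\bigr) = \la M^{-1} P(u_i), P(u_i^{-1}) \ra = \la P(u_i), M^{-1} P(u_i^{-1}) \ra
\]
by symmetry of $M^{-1}$. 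The inverse eigenvector relation $M u_i = u_i^{-1}$, together with $M^{-1} \1 = \1$ and~(\ref{u1}), yields the key identity $M^{-1} P(u_i^{-1}) = P(u_i)$, and each summand collapses to $\|P(u_i)\|^2$. Summing, $n - 1 = \sum c_i \|P(u_i)\|^2$.

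Since $u_i \in \hh$ means $\prod_j (u_i)_j^2 = 1$, AM--GM gives $\|u_i\|^2 \geq n$ with equality if and only if $u_i \in \{-1,1\}^n$; a short calculation in terms of $s_i = \la u_i, \1 \ra$ shows this is equivalent to $\|P(u_i)\|^2 \geq n$. Substituting yields $n - 1 \geq n \sum c_i = \tr P(M) = \tr M - 1$, hence $\tr M \leq n$. In the equality case, every contact point $u_i$ appearing with $c_i > 0$ lies in $\{-1,1\}^n$, so $u_i^{-1} = u_i$ and~(\ref{pm}) becomes $P(M) = \sum c_i P(u_i) \otimes P(u_i)$. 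Since $M$ is non-singular, $P(M)$ has rank $n-1$ with image $\1^\perp$, so the vectors $P(u_i)$ must span $\1^\perp$; equivalently, $\{\1\} \cup \{u_i\}$ spans $\R^n$. Each $u_i \in \{-1,1\}^n$ satisfies $M u_i = u_i^{-1} = u_i$, and $M \1 = \1$, so $M$ acts as the identity on a spanning set, forcing $M = I_n$ and $\ee^* = \sqrt{n}\, B_2^n$.

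The main obstacle is choosing the right matrix to pair $P(M)$ with under the trace duality~(\ref{minner}). Naive candidates (pairing with $P(M^{-1})$, with $M$ itself, or with $I_n$) either collapse to tautologies or yield only the weaker inequality $\tr M^2 \geq \tr M$, which is insufficient. Pairing with $M^{-1}$ works precisely because the inverse eigenvector relation, propagated through the projection $P$, gives the identity $M^{-1} P(u_i^{-1}) = P(u_i)$; this is what converts the ``mixed'' inner product $\la P(u_i), P(u_i^{-1}) \ra = n$ (a constant, carrying no information) into the ``diagonal'' quantity $\|P(u_i)\|^2$, which is precisely where the AM--GM bound bites.
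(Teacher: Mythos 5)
Your proof is correct and follows essentially the same route as the paper: pairing the representation~(\ref{pm}) with $M^{-1}$ under trace duality is the same computation as the paper's multiplication of~(\ref{pm}) by $P(M^{-1})$ followed by taking traces, both resting on the identity $M^{-1}P(u_i^{-1}) = P(u_i)$ (equivalently $P(M^{-1})P(u_i^{-1})=P(u_i)$, via $Mu_i=u_i^{-1}$, $M\1=\1$ and~(\ref{u1})) and on the bound $\la P(u_i),P(u_i)\ra \geq n$ for points of $P(\hh)$. Your explicit treatment of the equality case ($\|u_i\|^2=n$ forcing $u_i\in\{-1,1\}^n$ and then $M=I_n$ on a spanning set) is a welcome addition that the paper leaves implicit, but it does not change the underlying argument.
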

\begin{proof}
We shall use that for $x, y \in \R^n$ and a real symmetric $n \times n$ matrix
$A$,
\[
(x \otimes y) A = x \otimes (A y).
\]
Assume that $M$ is not singular, and multiply both sides of (\ref{pm}) by
$P(M^{-1})$. Then $P(M^{-1}) P(u_i^{-1}) = P(u_i)$, and by (\ref{pminv}), we
obtain that
\begin{equation}\label{pp}
 I_n - \frac{\1 \otimes \1}{n} = \sum c_i P(u_i) \otimes P(u_i).
\end{equation}
Since $B_2^n \subset \inte \hh$, the norm of every point of $P(\hh)$ is at
least $\sqrt{n}$. Therefore,
\[
\tr P(u_i) \otimes P(u_i) = \la P(u_i),P(u_i) \ra \geq n.
\]
On the other hand,
\[
\tr \bigg( I_n - \frac{\1 \otimes \1}{n} \bigg)= n-1.
\]
Thus, (\ref{pp}) implies that $\sum c_i \leq (n-1)/n$.
\end{proof}

If $M$ is singular, then proceeding the above way, instead of obtaining a
representation of the projection onto $\1^\perp$, we derive a representation of
the projection onto the subspace of $P(\ee^*)$. The problem is that we cannot
estimate the norms of vectors in this subspace; moreover, in the lower
dimensional extremal spaces, the norms of the contact points are not equal.
Some relation connected to the scaling is missing. However, the fact that the
characterisation of Theorem~\ref{strongpolarextr} works for any extremal case,
give us hope that this approach for the polarization problems will eventually
reach its goal.

\chapter{The problem of the longest convex chains}

We discuss the following problem. Let $T$ be the triangle with vertices $(1,0),
(0,0)$ and $(0,1)$. Choose $n$ independent, uniform random points from $T$, the
collection of which is denoted by $X_n$. A subset $Y \subset X_n$ is a convex
chain, if the points are the vertices of a convex path from $(0,1)$ to $(1,0)$.
We are interested in the behaviour of the longest convex chains, where length
is measured by the number of vertices. The maximal length is denoted by $L_n$.
In Section~\ref{expect}, we prove an asymptotic result for the expectation of
$L_n$. It turns out that $L_n$ is highly concentrated about its mean; this is
the main content of Section~\ref{concentr}. With the aid of this property, we
show that the longest convex chains are in a small neighbourhood of a special
parabolic arc with high probability. The proof of this theorem, that is
presented in Section~\ref{limshsec}, is based on a conditional probabilistic
method, to be found in Section~\ref{condsec}. Finally, Section~\ref{experi}
contains numerical results obtained by computer simulations.

Most of these results were published in the joint article with Imre B\'ar\'any
\cite{ab}. However, the material has been almost completely reorganised, and
besides other modifications, the limit shape result is proven by a new method.
Hopefully, these changes led to a more clarified exposition of the topic.

\section{Introduction and related results}\label{introd}

The area of probabilistic discrete geometry has a history that is almost 150
years old. In 1864, Sylvester  posed the following problem in the Educational
Times: ``{\em Show that the chance of four points forming the apices of a
reentrant quadrilateral is 1/4 if they be taken at random in an indefinite
plane}''. It turned out immediately that the problem was wrongly formulated, as
the underlying distribution on the plane was not specified. There are several
ways to correct the question, the most popular of which has been the following:
``{\em Let $K$ be a convex body in the plane, and $n$ points independently from
$K$ with uniform distribution. What is the probability that they are in convex
position?}'' This question was fully answered by B\'ar\'any in 1999 \cite{B99}. The
situation of choosing finitely many, independent, uniform random points in a
convex body has been extremely fertile, and hundreds of papers have been
written on related questions. For an excellent survey of some topics in the
area, see e.g. B\'ar\'any \cite{b08}.

In the present chapter we consider the following problem. Let $T \subset \R^2$
be a triangle with vertices $p_0,p_1,p_2$ and let $X=X_n$ be a random sample of
$n$ independently chosen random points from $T$ with uniform distribution. A
subset $Y \subset X_n$ is a {\sl convex chain} in $T$ (from $p_0$ to $p_2$) if
the convex hull of $Y \cup\{p_0,p_2\}$ is a convex polygon with exactly $|Y|+2$
vertices. A convex chain $Y$ gives rise to the {\em polygonal path} $C(Y)$
which is the boundary of this convex polygon minus the edge between $p_0$ and
$p_2$. The {\sl length} of the convex chain $Y$ is just $|Y|$. We shall
investigate the length of a longest convex chain in $X_n$, which will be
denoted by~$L_n$.

An equally plausible and natural question would be the following. Let $K$ be an
arbitrary convex region in the plane, and choose $n$ random, uniform,
independent points from $K$. What is the expectation of the largest subset of
the random points in convex position? We will explain later in the section,
that using standard methods, this question immediately boils down to the one
above for triangles. We mainly chose to work with the random variable $L_n$
because of the similarity to the famous problem of the longest increasing
subsequences in random permutations.

This connection is easily established. Indeed, let  $X_n$ is a uniform sample
of $n$ independent points from the unit square, and call a chain from $(0,0)$
to $(1,1)$ {\em monotone}, if the slope of every edge in it is positive. By
ordering the points of $X_n$ according to their $x$-coordinates, the order of
their $y$-coordinates represents a permutation $\sigma$ of $\{ 1, \dots , n
\}$, where the longest monotone chain from $X_n$ corresponds to the longest
increasing subsequence of~$\sigma$. It is easy to check that the resulted
distribution on $S_n$ is the uniform distribution.

The problem of the longest increasing subsequences is over 40 years old, in
which period, it has been linked to various parts of mathematics, e.g. Young
tableaux, patience sorting, planar point processes, and the theory of random
matrices. In 1977, independently of each other, Logan and Shepp \cite{ls} and
Vershik and Kerov \cite{vk} proved that the expectation is $2\sqrt n(1+o(1))$.
After a variety of  related results, the limit distribution was determined by
Baik, Deift and Johansson in 1999 \cite{baik}. Surprisingly, this turned out to
be the same as the limit distribution of the largest eigenvalue of random $n
\times n$ Hermitian matrices, which was determined by Tracy and Widom in 1994
\cite{tracy}. More details about the problem can be found in the excellent
survey paper by Aldous and Diaconis \cite{ad}.

Let us return now to random points in a planar convex body. We need some
notation to formulate the results. The (Lebesgue) area of $K$ will be denoted
by $A(K)$. Next, we define the {\em affine perimeter} of a planar convex body
$S$, see e.g. \cite{B97} or \cite{BP}; there are many ways to do it, of which
we choose the one which is most relevant. Choose a partition $x_1, \dots, x_m,
x_{m+1}=x_1$ of the boundary $\partial S$, and for every $i$, let the line
$l_i$ support $S$ at $x_i$. Denote by $T_i$ the triangle enclosed by $l_i$,
$l_{i+1}$, and the segment $x_1 \, x_{i+1}$. The affine perimeter of $S$ is
then defined by
\begin{equation}\label{affper}
AP(S) = 2 \lim \sum_{1}^m \sqrt[3]{A(T_i)},
\end{equation}
where the limit is taken over a sequence of partitions whose mesh tends to 0.
The existence of the limit is showed in the above cited papers. We also mention
that if the boundary of $S$ is twice differentiable, then $AP(S) =
\int_{\partial S}\kappa^{1/3} ds$, where  $\kappa$ denotes the curvature of
$\partial S$ and the integral is taken with respect to the arc length.
Naturally, the affine length can be defined in the same manner for convex
curves. Finally, for a convex body $K \subset \R^2$, let
\begin{equation}\label{maxaffper}
A^*(K) = \sup \{AP(S): S \subset K \textrm{ convex} \}.
\end{equation}
Because of compactness, the supremum in the above definition is attained.
Furthermore, B\'ar\'any proved in \cite{B97} the maximiser is unique: there is
exactly one convex body contained in $K$, say $K_0$, for which $A^*(K) =
AP(K_0)$. Now, the structure of $K_0$ is well known as well \cite{B97}. If $T$
is the triangle with vertices $p_0,p_1,p_2$, then among all convex curves
connecting $p_0$ and $p_2$ within $T$, the unique parabola arc $\G \subset T$
that is tangent to the sides $p_0p_1$ at $p_0$ and $p_1p_2$ at $p_2$ has the
largest affine length. The parabola arc $\G$ will be called {\em the special
parabola} in $T$, see Figure~\ref{parab1}. From this fact, it simply follows
that $\partial K_0 \setminus \partial K$ consists of parabola arcs that are
tangent to $\partial K$ at their endpoints.

\begin{figure}[h]
\epsfxsize = 0.4 \textwidth \centerline{\epsffile{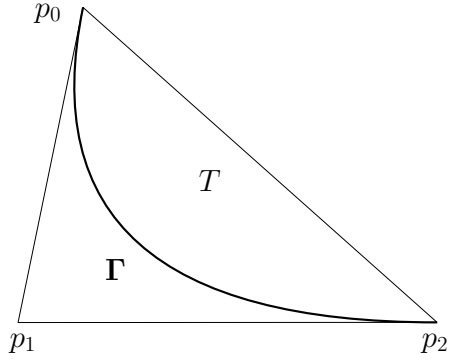}}
  \caption{The special parabola}
  \label{parab1}
\end{figure}

The importance of affine perimeter was first pointed out in the work of R\'enyi
and Sulanke \cite{RS}. They proved that if $K$ is a smooth convex body in the
plane, and $X_n$ is a uniform sample of $n$ points in $K$, then the expected
number of the vertices of $\conv X_n$ (the convex hull of $X_n$) is
asymptotically
\begin{equation}\label{rsexp}
\Gamma\left(\frac{5}{3}\right) \sqrt[3]{\frac{2}{3}} \left(A(K) \right)^{-1/3}
AP(K) \, \sqrt[3]{n} .
\end{equation}
Here of course $\Gamma$ stands for the Gamma function.

Now, let $K\subset \R^2$ be an arbitrary convex body. B\'ar\'any \cite{B99} proved
that if we take all the convex polygons spanned by points of $X_n$, then the
majority of them is close to $K_0$. Therefore, $K_0$ is the {\em limit shape}
of the random convex polygons inside $K$. In the article \cite{BRSZ},
strengthening this result, an almost sure limit theorem and a central limit
theorem for convex chains are proved.

Next, we give an alternative model for obtaining a sample from $K$, by choosing
{\em lattice points}. The connection between this and the uniform model
motivates a strong inspiration on both sides. Consider the lattice $(1/t)\,
\Z^2$, where $\Z^2$ is the usual integer lattice in $\R^2$ and $t>0$ is large,
and set $X=K \cap (1/t)\,\Z^2$.  The same questions can be formulated for $X$
as for $X_n$. For instance, Andrews \cite{andr} proved an upper estimate for
the number of vertices of the integer convex hull:
\[
|\conv X| \leq c\, A(K)^{1/3}
\]
for some constant $c$. This shows that the two models do not behave necessarily
in the same way. However, for the convex lattice polytopes of $X$, the same
limit shape result holds as for the uniform model. This, in full generality,
was proved in \cite{B97}.

Regarding the present problem, there exists a result about {\em convex lattice
chains}. Let $T$ be the usual triangle, and consider the lattice points in $T$.
Let $n= |X|$; clearly, for large $t$, $n \approx \A(T)\,t^2$. Write $Y_n\subset
X$ for a longest convex chain in $T$. It is shown by B\'ar\'any and Prodromou
\cite{BP} that, as~$t \to \infty$ (or $n\to \infty$),
\begin{equation}\label{tria}
|Y_n|=\frac 6{(2\pi)^{2/3}} \sqrt[3]{t^2 \A(T)}(1+o(1))=\frac
6{(2\pi)^{2/3}}n^{1/3}(1+o(1)).
\end{equation}

Next, we list our results. First, in Section~\ref{expect}, we prove an
asymptotic result about the expectation of $L_n$. In view of (\ref{tria}), we
expect the order of magnitude to be $n^{1/3}$. This is indeed so:
\begin{theorem} \label{limit}
There exists a positive constant $\al$, for which
$$\lim_{n \rightarrow \infty} \frac{ \E L_n}{\sqrt[3]{n}}=\al \,.$$
\end{theorem}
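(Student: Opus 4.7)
The plan is to combine affine invariance with a Fekete-type superadditivity argument along the special parabola $\G$. Both convex position and the uniform distribution are affine-invariant, so $\E L_n$ depends only on $n$; write $f(n):=\E L_n$.

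First I would establish crude bounds $cn^{1/3}\le f(n)\le Cn^{1/3}$. The upper bound can be transferred from the lattice estimate (\ref{tria}) by coupling $X_n$ with a fine grid in $T$. The lower bound comes from tiling a narrow strip along $\G$ by $\Theta(n^{1/3})$ disjoint cells: each cell contains a sample point with high probability, and any choice of one point per cell is in convex position by construction.

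The heart of the argument is superadditivity along $\G$. For $a<b$ let $T[a,b]$ denote the triangle cut off by the chord $\G(a)\G(b)$ and the tangents to $\G$ at these points. By the self-invariance of the parabola all $T[a,b]$ are affinely equivalent, with $\A(T[a,b])=C_0(b-a)^3$ in the natural affine parametrization. Passing to a Poisson process of intensity $\lambda$ on an auxiliary strip around $\G$, write $h_\lambda(t)$ for the expected length of the longest convex chain from $\G(a)$ to $\G(a+t)$ inside $T[a,a+t]$; by affine invariance this does not depend on $a$. Concatenating two adjacent subchains produces a single convex chain in the combined cap whenever the two junction endpoints both lie close to the common tangent point $\G(b)$: the caps sit on opposite sides of the tangent at $\G(b)$, so the edge slopes of the subchains are separated by the tangent slope there, and the joining edge then lies in the admissible range. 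Pre-restricting to ``corridor-ended'' subchains costs only an additive constant per junction, yielding
\[
h_\lambda(s+t)\ge h_\lambda(s)+h_\lambda(t)-O(1).
\]
Combined with $h_\lambda(t)=O(\lambda^{1/3}t)$, Fekete's lemma gives $h_\lambda(t)/t\to\beta(\lambda)$ as $t\to\infty$ with $0<\beta(\lambda)<\infty$. Affine rescaling of the cap identity $h_\lambda(t)=h_{\lambda t^3}(1)$ then turns this into $h_\mu(1)/\mu^{1/3}\to\beta(\lambda)/\lambda^{1/3}=:\alpha$, a quantity independent of $\lambda$. Since $h_{n/\A(T)}(1)$ is the Poissonized analogue $f^P(n)$ of $f(n)$, we get $f^P(n)/n^{1/3}\to\alpha>0$, and a standard de-Poissonization (using $|X_N|\approx N$ and the monotonicity of $L_n$) transfers the limit to $f(n)/n^{1/3}\to\alpha$.

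The main obstacle is precisely the junction convexity: because the separating tangent point $\G(b)$ is not in $X_n$, the joining edge could in principle break the monotonicity of slopes required for convex position. The remedy outlined above exploits the fact that the extremal chain in a cap triangle concentrates near its parabola arc, so constraining its last vertex to lie close to the tangent point costs only an additive constant per junction rather than a multiplicative factor, which is exactly what the $n^{1/3}$ scaling tolerates.
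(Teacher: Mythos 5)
Your overall scheme---Poissonize, exploit the affine self-similarity of the tangent caps of $\G$, apply Fekete's lemma to the cap functional $h_\lambda$, then de-Poissonize---is a legitimate alternative organization of the proof (the paper instead compares $\liminf$ and $\limsup$ directly, using the cap decomposition of Corollary~\ref{areasum} together with the binomial concentration bound (\ref{hoeffding})). But there is a genuine gap exactly where you invoke $h_\lambda(t)=O(\lambda^{1/3}t)$, equivalently $\E L_n=O(n^{1/3})$: without this linear upper bound, Fekete's lemma only gives $h_\lambda(t)/t\to\sup_t h_\lambda(t)/t$, which could be $+\infty$, and the rescaling identity $h_\lambda(t)=h_{\lambda t^3}(1)$ then yields nothing. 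Your proposed justification---transferring the lattice estimate (\ref{tria}) by ``coupling $X_n$ with a fine grid''---does not work. Rounding the points of a convex chain to a grid does not preserve convex position, and choosing the grid fine enough that the sample injects into it produces a lattice with far more than $n$ points, for which the bound of \cite{BP} is of order much larger than $n^{1/3}$ and hence useless. Note also that with positive probability all $n$ sample points are in convex position, so any $O(n^{1/3})$ bound on $\E L_n$ is intrinsically probabilistic and cannot come from a deterministic rounding or counting argument. The paper obtains it from the exact probability $2^k/\bigl(k!\,(k+1)!\bigr)$ that $k$ uniform points in $T$ form a convex chain, via a union bound and Stirling's formula; an estimate of this first-moment type must be supplied before your Fekete argument closes (it is also needed, in milder form, for your de-Poissonization step).

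A second remark: the obstacle you single out at the junctions is not actually there, so the ``corridor-ended'' device---whose $O(1)$ cost you assert without proof---can simply be deleted. Since you define $h_\lambda$ through chains anchored at the parabola points, a convex chain in $T[a,b]$ from $\G(a)$ to $\G(b)$ and one in $T[b,c]$ from $\G(b)$ to $\G(c)$ concatenate automatically: both caps lie on the same side of the tangent at $\G(b)$, the edge slopes of the two sub-paths are separated by the slope of that tangent, so the path passing through the anchor $\G(b)$ is convex; deleting $\G(b)$ keeps the remaining points in convex position, and both sub-caps are contained in $T[a,c]$, so the union is a convex chain from $\G(a)$ to $\G(c)$ in $T[a,c]$. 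Hence superadditivity holds exactly, $h_\lambda(s+t)\ge h_\lambda(s)+h_\lambda(t)$, with no loss at the junction---this is precisely how the paper concatenates chains across the triangles of Corollary~\ref{areasum} in both its lower bound and its $\liminf$/$\limsup$ comparison.
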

We also establish the estimates $1.57 < \alpha < 3.43$. Numerical experiments
suggest that $\al=3$ and we venture to conjecture that this is the actual value
of $\al$, which would nicely match the expectation of the longest increasing
subsequences.

We have seen that the convex chains are located close to the parabolic arc $\G$
in both the uniform and the lattice models. Although this does not imply that
the {\em longest} convex chains are close to $\G$, it is reasonable to guess
so. Indeed, we will essentially prove that if $\cc(X_n)$ is the collection of
all longest convex chains from $X_n$, then for every $\eps>0$,
\begin{equation}\label{limelso}
\lim _{n \to \infty} \PP \big(\dist(C(Y),\G)> \eps\mbox{ for some }Y \in
\cc(X_n)\,\big)=0,
\end{equation}
where $\dist(.,.)$ stands for the Hausdorff distance. In order to achieve this
result, we need strong concentration results in the sense of Talagrand's
inequality, that are established in Section~\ref{concentr}.
Sections~\ref{condsec} and \ref{limshsec} contain the proof of the following
quantitative limit shape theorem.
\begin{theorem}\label{limsh}
Let $\gamma \geq 1 $ and define $\eps= 2 \gamma^{1/2} \, n^{-1/12} (\log
n)^{1/4}$. Then there exists $N>0$, depending on $\gamma$, such that for every
$n>N$,
\[
\PP\big( {\rm dist}(C(Y),\G)>\eps \mbox{ for some }Y \in \cc(X_n)\big)<
n^{-\gamma^2/14}.
\]
\end{theorem}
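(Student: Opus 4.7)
My plan is to compare $L_n$ to the maximum convex chain length when restricted to regions avoiding an $\eps$-tube around $\G$, exploiting that $\G$ is the strict maximizer of affine perimeter for convex curves in $T$ joining $(0,1)$ to $(1,0)$.

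The first ingredient is a quantitative stability lemma for $\G$: every convex curve $\sigma$ in $T$ from $(0,1)$ to $(1,0)$ with $\mathrm{dist}(\sigma,\G)\geq \eps$ satisfies $AP(\sigma)\leq AP(\G) - c_0\eps^2$ for a universal constant $c_0>0$. The quadratic loss follows from a second-variation analysis of the concave functional $AP(\cdot)$ at its unique maximizer; parametrizing $\G$ by affine arclength and perturbing transversally by $\eps$ on a short arc, the cube-root integrand in (\ref{affper}) forces a $\Theta(\eps^2)$ reduction. Then I would combine this with a local version of Theorem~\ref{limit}: if $R\subset T$ is a convex subregion whose upper boundary arc $\sigma$ from $(0,1)$ to $(1,0)$ has affine perimeter $AP(\sigma)$, then by repeating the cap-decomposition proof of Theorem~\ref{limit} with $\sigma$ in place of $\G$, the expected length of the longest convex chain through $X_n\cap R$ is at most $\alpha n^{1/3}\cdot AP(\sigma)/AP(\G)(1+o(1))\leq (\alpha - c_1\eps^2)n^{1/3}$ once $\sigma$ is $\eps$-far from $\G$.

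Next, I would use the concentration inequalities of Section~\ref{concentr} together with a discretization. If some $Y\in\cc(X_n)$ has $\mathrm{dist}(C(Y),\G)>\eps$, then $C(Y)$ is contained in some convex subregion $R_Y$ whose upper boundary is $\eps$-far from $\G$. Approximating $R_Y$ in Hausdorff distance by elements of a fixed $\eps/3$-net of candidate subregions $R_1,\dots,R_M$ with $M=n^{O(1)}$, the Talagrand-style inequality of Section~\ref{concentr} gives, for each $R_j$,
\[
\PP\bigl(L_n(R_j)\geq \alpha n^{1/3} - \tfrac{1}{2}c_1\eps^2 n^{1/3}\bigr) \leq \exp\bigl(-c_2 \eps^4 n^{1/3}\bigr),
\]
while Theorem~\ref{limit} plus concentration yields $L_n\geq \alpha n^{1/3}(1-o(1))$ except on a tail of the same size. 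With $\eps^4 = 16\gamma^2 n^{-1/3}\log n$, each term becomes $n^{-16 c_2\gamma^2}$, and the polynomial factor $M$ is absorbed by choosing $c_2$ so that the final union bound is dominated by $n^{-\gamma^2/14}$ for all $n\geq N(\gamma)$.

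The main obstacle will be making the subregion upper bound quantitative and uniform, and simultaneously controlling the size of the net of candidate subregions at $n^{O(1)}$; a naive parametrization by, say, $O(n^{1/3})$ chord vertices would produce combinatorially too many candidates. This is exactly where the conditional probabilistic method of Section~\ref{condsec} should be used: by conditioning on the counts $|X_n\cap T_i|$ across a fixed fine cap partition of $T$ and applying concentration cell by cell, one separates the deterministic geometric enumeration from the probabilistic estimates, keeping the union bound tractable and turning the affine perimeter loss into a genuine quantitative deficit in expected chain length.
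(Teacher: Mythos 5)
Your overall intuition (a quadratic affine-perimeter deficit for curves $\eps$-far from $\G$, beaten by concentration) matches the heuristic the paper states at the start of Section~\ref{condsec}, but two of your key steps have genuine gaps. The central one is the comparison ``expected longest chain avoiding the tube $\leq (\alpha - c_1\eps^2)n^{1/3}$ versus $L_n \geq \alpha n^{1/3}(1-o(1))$''. With $\eps = 2\gamma^{1/2}n^{-1/12}(\log n)^{1/4}$ the deficit is $c_1\eps^2 n^{1/3} = \Theta\big(\gamma\, n^{1/6}\sqrt{\log n}\big)$, which is negligible compared with the unquantified $o(n^{1/3})$ error terms hidden in Theorem~\ref{limit}: that theorem only asserts existence of the limit $\alpha$, with no rate of convergence, and your ``local version'' of it for subregions inherits the same uncontrolled $(1+o(1))$. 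So the inequality you want to contradict cannot be extracted from the asymptotics you invoke. The paper avoids this entirely: in Lemma~\ref{triexp} the two triangles $T_1,T_2$ cut off by the tangent $\ell(q)$ at an extremal point are compared with $T$ \emph{at the same finite $n$} via superadditivity --- chains in three tangent caps $S_1,S_2,S_3$ along $\G$ concatenate to a chain in $T$, so $\E L^1+\E L^2+\E\La^3 \leq \E L_n$ exactly --- and only the explicit finite-$n$ lower bound $\E\La^3 \geq 0.52\, r^2 n^{1/3}$ (coming from the concrete constant $1.57$ in (\ref{also}) and Chernoff) is needed; the unknown $\alpha$ and its convergence rate cancel out. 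Relatedly, your stability lemma $AP(\sigma)\leq AP(\G)-c_0\eps^2$ is only sketched by a ``second variation'' appeal; the paper proves the needed quantitative fact explicitly and locally, as the strengthened Blaschke inequality of Theorem~\ref{mobi+} for the single tangent line at the extremal point, which is all that is required.

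The second gap is the union bound over an $\eps/3$-net of convex subregions with $M=n^{O(1)}$: you correctly flag that a naive parametrization explodes combinatorially, but the fix you propose (conditioning on cell counts of a cap partition) is not developed and is not what the conditional method of Section~\ref{condsec} does. The paper never enumerates subregions at all. It localizes the bad event at a \emph{single extremal point} $q$ of the offending chain, using a dichotomy: a lower extremal point must be a vertex of the chain, hence a sample point, while an upper extremal point lies on a segment between two sample points on the tangent line $\ell(q)$. Conditioning on one (resp.\ two) sample points being at the relevant location leaves the remaining points distributed as $X_{n-1}$ (resp.\ $X_{n-2}$), so the earlier concentration results apply to the subtriangles; integrating the conditional probabilities $P(q)$ over $T$ then produces only the factors $n$ and $10n^2$ of Theorem~\ref{probest}, which are absorbed because Lemma~\ref{cond} gives $P(q)\leq n^{-31\gamma^2/14}$. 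Note also that you cannot ``choose $c_2$'' in the concentration bound to absorb $M$; the exponent is dictated by Talagrand's inequality via the median, so the margin must come from the geometry (here, from the factor $31/14$ versus the target $1/14$), not from tuning constants after the fact.
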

\noindent Note that this is much stronger than (\ref{limelso}), since here
$\eps \rightarrow 0$ as well.

Once Theorems~\ref{limit} and \ref{limsh} for triangles are known, it is not
too hard to extend the results for arbitrary convex sets: the method is
illustrated for example in \cite{BP}. We give a sketch here. Let $K\subset
\R^2$ be a convex body, and $K_0$ its convex subset of maximal affine
perimeter. Let $X_n$ be a uniform sample from $K$, and assume that $Y_n$ is a
largest subset of $X_n$ in convex position. Let $C(Y_n)$ be the polygonal path
of $Y$. Let $m$ be fixed, and for every $k=1, \dots, m$, let $l_k$ be a tangent
to $C(Y_n)$ of direction $2 \pi k/m$, with contact point $x_k$. Let $T_k$ be
the triangle delimited by $l_k$, $l_{k+1}$, and the segment $x_k \, x_{k+1}$.
It is easy to check that $Y_n \cap T_k$ is a maximal convex chain in $T_k$. For
$n \gg m$, the number of points in $T_k$ is about $n A(T_k)/A(K)$, and by
Theorem~\ref{limit} and formulae (\ref{affper}) and (\ref{maxaffper}),
\[
|Y_n| \approx \frac{\alpha\, n^{1/3}}{\sqrt[3]{A(K)}} \sum_1^m \sqrt[3]{A(T_k)}
\leq \frac{\alpha\, n^{1/3} A^*(K)}{2 \sqrt[3]{A(K)}},
\]
where $\al$ is the constant from Theorem~\ref{limit}. On the other hand, by
choosing the points $x_1, \dots, x_m$ on $\partial K_0$, the quantity on the
right hand side can be achieved. Therefore,
\[ \lim_{n \rightarrow \infty} n^{-1/3} \E|Y_n|= \frac
{\al \A^*(K)}{2\root 3 \of{A(K)}}.
\]
Moreover, this is achieved only if $C(Y)$ is sufficiently close to $K_0$;
therefore, the limit shape of the maximal convex polygons is necessarily $K_0$.

The above argument serves as the basis for the subsequent proofs as well.
However, there are many non-trivial details, and tricky proofs, to come.

Finally, we mention another model, that is often used in probabilistic
geometry, although in the present case it will not be applied. Let $X(n)$ be a
homogeneous planar Poisson process of intensity $n/ \A(T)$. Given a domain $D$
in the plane, the number of points of $X(n)$ in $D$, that we denote by $m(D)$,
has Poisson distribution with parameter $\lambda=n \A(D)/\A(T)$:
\[
 \PP\big(m(D)=k\big)= e^{-\lambda} \lambda ^k / k! \;.
\]
We can also think of the Poisson model as follows: for a domain $D$, we first
pick a random number $m$ according to the corresponding Poisson distribution,
and then choose $m$ random, independent, uniform points in $D$. As is well
known, the uniform model $X_n$ and the Poisson model $X(n)$ behave very
similarly. In particular, Theorems \ref{limit}, \ref{limsh}, and
 \ref{conc1} remain valid for the latter as well, with essentially the
same quantitative estimates. Obtaining these results from the ones presented
here is straightforward. An equally standard way is to prove the theorems for
the Poisson model, and then transfer the results to the uniform model. The
disadvantage of obtaining slightly weaker results is balanced by the fact that
under the Poisson model, the number of points of $X(n)$ in two disjoint domains
are independent random variables.

The longest increasing subsequence problem has been almost completely solved by
now, see \cite{ad}. In this respect, our results only constitute the first, and
perhaps the simplest steps in understanding the random variable $L_n$.

\section{Geometric and probabilistic tools}\label{geotools}

When choosing a random point in the triangle $T$, the underlying probability
measure is the normalized Lebesgue measure on $T$. Most of the random variables
treated in this chapter (e.g. $L_n$) are defined on the $n$th power of this
probability space, to be denoted by $T^n$. In this case $\PP$ denotes the $n$th
power of the normalized Lebesgue measure on $T$. Plainly, choosing $n$
independent random points in $T$, the number of points in any domain $D \subset
T$ is a binomial random variable of distribution $B(n,\A(D)/\A(T))$, and the
expected number of points in $D$ is $n \A(D)/\A(T)$.

For binomial random variables we have the following useful deviation estimates,
which are relatives of Chernoff's inequality, see \cite{as}, Theorems A.1.12
and A.1.13. If $K$ has binomial distribution with mean value $k>1$ and $c>0$,
then
\begin{equation} \label{hoeffding}
\PP \big(K \leq k -c \sqrt{k \log k}\,\big) \leq k^{-c^2/2}.
\end{equation}
On the other hand, for $c>1$,
\begin{equation} \label{hoeffdingfelso}
\PP \big(K \geq ck \big) \leq \left( \frac{e}{c}\right)^{ck}.
\end{equation}
We will use (\ref{hoeffding}) often, mainly with $c=1$.

\begin{figure}[h]
\epsfxsize =\textwidth \centerline{\epsffile{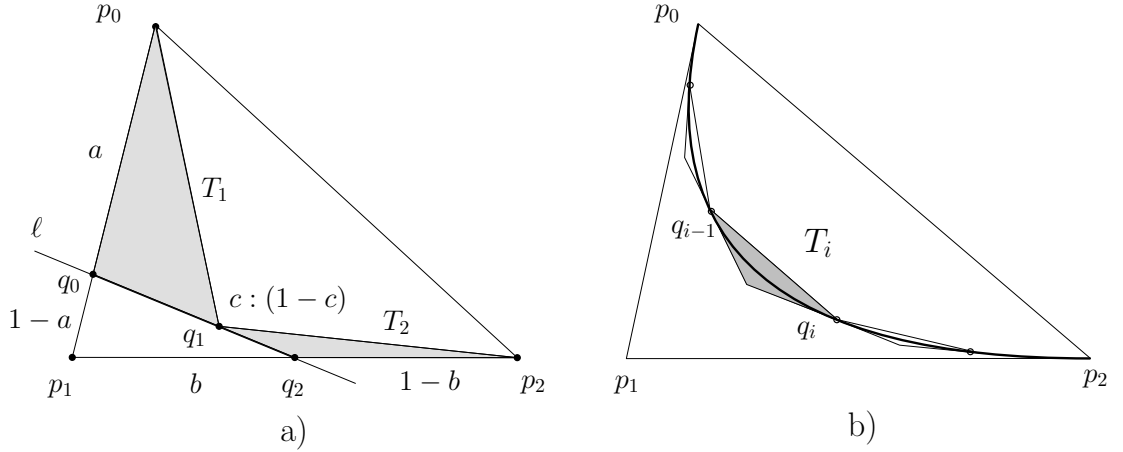}}
  \caption{Characterisation of $\G$}
  \label{mobiusfig}
\end{figure}

As we have mentioned earlier, the special parabola arc $\G \subset T$ is
characterized by the fact that it has the largest affine length among all
convex curves connecting $p_0$ and $p_2$ within $T$.  This is a consequence of
the following theorem from \cite{Bla}. Assume that a line $\ell$ intersects the
sides $p_0 p_1$ resp. $p_1 p_2 $ at points $q_0$ and $q_2$. Let $q$ be a point
on the segment $ q_0 q_2$ and write $T_1$ resp. $T_2$ for the triangle with
vertices $p_0,q_0,q$ resp. $q,q_2,p_2$, see Figure~\ref{mobiusfig} a).

\begin{theorem}[\cite{Bla}]\label{mobius} Under the above assumptions
\[
\sqrt[3]{\A(T_1)}+\sqrt[3]{\A(T_2)} \leq \sqrt[3]{\A(T)}.
\]
Equality holds here if and only if $q_1 \in \Gamma$ and $\ell$ is
tangent to $\G$ at $q_1$.
\end{theorem}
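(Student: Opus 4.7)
My plan is to reduce the inequality to a purely algebraic statement in three parameters and then dispatch it with a single application of H\"older's inequality. Placing the origin at $p_0$, I would parametrize
\[
q_0 = a\, p_1, \qquad q_2 = b\, p_1 + (1-b)\, p_2, \qquad q = (1-\mu)\, q_0 + \mu\, q_2,
\]
with $a,b,\mu \in (0,1)$. A direct cross-product computation then gives
\[
\A(T_1) = a\mu(1-b)\, \A(T), \qquad \A(T_2) = (1-a)(1-\mu)b\, \A(T),
\]
so that the inequality of the theorem is equivalent to the algebraic statement
\[
\bigl(a\mu(1-b)\bigr)^{1/3} + \bigl((1-a)(1-\mu)b\bigr)^{1/3} \leq 1.
\]

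The main step is one application of H\"older's inequality, grouping the three factors inside each cube root as a product over three pairs of length two:
\[
\bigl(a\mu(1-b)\bigr)^{1/3} + \bigl((1-a)(1-\mu)b\bigr)^{1/3} \leq \bigl(a+(1-a)\bigr)^{1/3}\bigl(\mu+(1-\mu)\bigr)^{1/3}\bigl((1-b)+b\bigr)^{1/3} = 1.
\]
Equality in H\"older forces the three pairs $(a,1-a)$, $(\mu,1-\mu)$, $(1-b,b)$ to be pairwise proportional, which translates to the two conditions $\mu=a$ and $a+b=1$.

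It will remain to identify this algebraic equality locus with the geometric condition that $\ell$ is tangent to $\G$ at $q$. Since the statement is affine invariant---ratios of areas are preserved by affine maps, and $\G$ is characterised affinely as the parabola tangent to $p_0p_1$ at $p_0$ and to $p_1p_2$ at $p_2$---I would pass to the convenient model $p_0=(-1,1)$, $p_1=(0,-1)$, $p_2=(1,1)$, for which $\G$ is the arc of $y=x^2$ with $-1\leq x\leq 1$. Writing the tangent line to $y=x^2$ at $(s,s^2)$ and intersecting it with the two sides of $T$ produces parameters $a = \mu = (s+1)/2$ and $b = (1-s)/2$, matching the H\"older equality condition exactly; conversely, every triple with $\mu=a$ and $a+b=1$ arises this way as $s$ ranges over $(-1,1)$. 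I do not anticipate any serious obstacle: the area formulae reduce to routine cross-products and the tangent calculation in the canonical model is elementary, so the entire content of the result is carried by the single H\"older step.
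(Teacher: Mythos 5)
Your proof is correct, and it is a genuinely self-contained route: the paper itself does not prove Theorem~\ref{mobius} (it is quoted from Blaschke), and the closest argument it contains is the proof of the strengthening, Theorem~\ref{mobi+}. Your reduction of the areas to $\A(T_1)=a\mu(1-b)\A(T)$ and $\A(T_2)=(1-a)(1-\mu)b\A(T)$ is exactly the same normalisation the paper uses there (its inequality (\ref{abc}) with parameters $a,b,c$), so the two arguments agree up to that point; they diverge in how the algebraic inequality is handled. You dispatch it with one application of the three-function H\"older inequality, and the H\"older equality condition ($\mu=a$, $a+b=1$) combined with the explicit model $y=x^2$ in the triangle $(-1,1),(0,-1),(1,1)$ gives the full ``if and only if'' tangency characterisation cleanly --- your tangent-line computations ($a=\mu=(s+1)/2$, $b=(1-s)/2$) check out, and the converse direction is covered since every such triple arises from some $s\in(-1,1)$. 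The paper instead minimises over the third parameter $c$ explicitly and then applies the AM--GM inequality to $a-a^2$ and $b-b^2$; that longer route is what produces the quantitative deficit term $\tfrac13(a-b)^2$ of Theorem~\ref{mobi+}, which is the estimate actually used later (in Lemma~\ref{triexp} and the limit-shape argument), and which a single H\"older application does not give. So your argument buys a short, complete proof of Theorem~\ref{mobius} including the equality case, while the paper's computation buys the stability version; the only minor point to add would be a remark on the degenerate boundary cases $a,\mu,b\in\{0,1\}$, where equality forces degenerate triangles and is excluded by your standing assumption $a,b,\mu\in(0,1)$.
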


The equality part of the theorem implies the following fact. Assume that
$p_0=q_0,q_1,\dots,q_k=p_2$ are points, in this order, on $\G$. Let $T_i$ be
the triangle delimited by the tangents to $\G$ at $q_{i-1}$ and $q_i$, and by
the segment $q_{i-1} q_i$, $i=1,\dots,k$; see Figure~\ref{mobiusfig} b).

\begin{cor}\label{areasum} Under the previous assumptions
$\sum_{i=1}^k\sqrt[3]{\A(T_i)}= \sqrt[3]{\A(T)}$. In particular,
when $\A(T_i)=t$ for each $i=1,\dots,k-1$ and $\A(T_k) < t$, then
$k-1\le \sqrt[3]{\A(T)/t}<k$.
\end{cor}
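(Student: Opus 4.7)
\medskip
\noindent\textbf{Proof plan.} The plan is to iterate Theorem~\ref{mobius}, invoking its equality clause $k-1$ times. Since every $q_i$ lies on the special parabola $\G$ and the splitting line used at $q_i$ is precisely the tangent to $\G$ at $q_i$, each application furnishes an \emph{equality} rather than just an inequality, and the identity drops out by telescoping.

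For the first step, I apply Theorem~\ref{mobius} to $T$ with $\ell$ the tangent to $\G$ at $q_1$ and with $q = q_1$. The line $\ell$ meets $p_0 p_1$ at some point $s_1$ (playing the role of the theorem's $q_0$) and $p_1 p_2$ at $s_1'$ (playing the role of $q_2$). Using that the tangent to $\G$ at $q_0 = p_0$ is the entire side $p_0 p_1$, the resulting theorem-triangle with vertices $p_0, s_1, q_1$ is exactly the $T_1$ of the corollary (bounded by the two tangents and the chord $q_0 q_1$), while the second triangle $T_1'$, with vertices $q_1, s_1', p_2$, is the residual region bounded by the tangent at $q_1$, the remaining piece of $p_1 p_2$, and the chord $q_1 p_2$. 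The equality clause then yields $\sqrt[3]{\A(T_1)} + \sqrt[3]{\A(T_1')} = \sqrt[3]{\A(T)}$.

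The key observation for the induction is that $\G$, restricted to the sub-arc from $q_1$ to $p_2$, plays the role of the special parabola for $T_1'$: it connects the endpoints of the ``base'' $q_1 p_2$ and is tangent to the side $q_1 s_1'$ at $q_1$ and to the side $s_1' p_2$ at $p_2$, so by uniqueness (used in defining $\G$) it is the special parabola of $T_1'$. Hence Theorem~\ref{mobius} applies again to $T_1'$ with splitting line the tangent at $q_2$ and splitting point $q = q_2$; this peels off $T_2$ and leaves a residual triangle $T_2'$ associated with the arc from $q_2$ to $p_2$, with $\sqrt[3]{\A(T_2)} + \sqrt[3]{\A(T_2')} = \sqrt[3]{\A(T_1')}$. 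After $k-1$ such iterations the residual triangle coincides with $T_k$, and summing the telescoping equalities gives $\sum_{i=1}^k \sqrt[3]{\A(T_i)} = \sqrt[3]{\A(T)}$.

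For the ``in particular'' clause, substitute $\A(T_i) = t$ for $i = 1, \dots, k-1$ and $0 \le \A(T_k) < t$ into this identity: the left-hand side lies in $[(k-1) t^{1/3},\, k t^{1/3})$, so $(k-1) \le \sqrt[3]{\A(T)/t} < k$, as required. The only step requiring care is the verification that the sub-arc of $\G$ really is the special parabola of each residual triangle, so that the equality clause of Theorem~\ref{mobius} is legitimately invoked at every stage; this follows from the tangency of $\G$ at the new vertices $q_i$ and $p_2$ to the two sides of $T_{i-1}'$ meeting at its apex, together with the uniqueness of the parabolic arc with these tangency properties.
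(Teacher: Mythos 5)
Your proof is correct and follows exactly the route the paper intends: the paper derives Corollary~\ref{areasum} by iterating the equality case of Theorem~\ref{mobius} ("The equality part of the theorem implies the following fact"), which is precisely your telescoping argument. Your extra care in checking that the sub-arc of $\G$ is the special parabola of each residual triangle (via the tangency at $q_i$ and $p_2$ and uniqueness) is exactly the point that legitimises each application of the equality clause, so nothing is missing.
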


We will need a strengthening of Theorem \ref{mobius}. Assume $q_0$ resp. $q_2$
divides the segment $p_0 p_1$ resp. $p_1 p_2$ in ratio $a:(1-a)$ and $b:(1-b)$,
see Figure~\ref{mobiusfig} a).

\begin{theorem}\label{mobi+} With the above notation
\[
 \sqrt[3]{\A(T_1)}+\sqrt[3]{\A(T_2)} \leq \sqrt[3]{\A(T)}-
\sqrt[3]{\A(T)}\frac 13 (a-b)^2.
\]
\end{theorem}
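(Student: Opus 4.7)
The plan is to reduce the inequality to a clean two-variable statement and then settle it by a trigonometric substitution. By affine invariance I may take $p_0=(0,0)$, $p_1=(1,0)$, $p_2=(0,1)$, so $A(T)=1/2$, $q_0=(a,0)$, $q_2=(1-b,b)$, and any $q$ on $q_0 q_2$ is $q=(1-\lambda)q_0+\lambda q_2$ for some $\lambda\in[0,1]$. A short determinant computation yields $A(T_1)=ab\lambda/2$ and $A(T_2)=(1-a)(1-b)(1-\lambda)/2$, so, dividing through by $\sqrt[3]{A(T)}$, the desired inequality reads
\[
(ab\lambda)^{1/3}+\bigl((1-a)(1-b)(1-\lambda)\bigr)^{1/3}\;\le\; 1-\tfrac{1}{3}(a-b)^2.
\]

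Next I optimize the left-hand side over $\lambda$ with $a,b$ fixed. Setting $s=\sqrt{ab}$ and $t=\sqrt{(1-a)(1-b)}$, the $\lambda$-derivative vanishes uniquely at $\lambda=s/(s+t)$, and back-substitution shows that the maximum value equals $(s+t)^{2/3}$. Hence it suffices to prove the two-variable inequality
\[
F(a,b):=\sqrt{ab}+\sqrt{(1-a)(1-b)}\;\le\;\bigl(1-\tfrac{1}{3}(a-b)^2\bigr)^{3/2}.
\]

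The key move is the substitution $a=\sin^2\alpha$, $b=\sin^2\beta$ with $\alpha,\beta\in[0,\pi/2]$, under which $F(a,b)=\sin\alpha\sin\beta+\cos\alpha\cos\beta=\cos(\alpha-\beta)$. Writing $\theta=\alpha-\beta$, the identity $a-b=\sin(\alpha+\beta)\sin(\alpha-\beta)$ gives $(a-b)^2\le\sin^2\theta$, so it is enough to prove $\cos\theta\le(1-\sin^2\theta/3)^{3/2}$ on $\theta\in[-\pi/2,\pi/2]$. Since $\cos\theta\ge 0$ on this range, squaring and setting $u=\cos^2\theta\in[0,1]$ turns this into $27u\le(2+u)^3$, which expands to the manifest identity
\[
(2+u)^3-27u=(u-1)^2(u+8)\;\ge\;0,
\]
finishing the argument.

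The main obstacle is conceptual rather than computational, and sits in the reduction to $F\le(1-(a-b)^2/3)^{3/2}$: naive Cauchy--Schwarz estimates on $\sqrt{a(1-b)}+\sqrt{b(1-a)}$ yield only $F\le\sqrt{1-(a-b)^2/2}$, which is too weak to produce a cubing exponent of $3/2$. The trigonometric substitution is the decisive trick, since it exhibits $F$ exactly as $\cos(\alpha-\beta)$ and collapses the inequality to the polynomial factorization displayed above; I note also that the inequality is tight precisely when $\alpha=\beta$, i.e.\ when $a=b$, which (as already observed in the remarks after Theorem~\ref{mobius}) corresponds to $\ell$ being tangent to $\Gamma$.
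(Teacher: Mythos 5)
Your proof is correct and follows essentially the same route as the paper: the identical area parametrization $\A(T_1)=ab\lambda\,\A(T)$, $\A(T_2)=(1-a)(1-b)(1-\lambda)\,\A(T)$ and the same optimization over the division parameter, reducing everything to $\sqrt{ab}+\sqrt{(1-a)(1-b)}\le\bigl(1-\tfrac13(a-b)^2\bigr)^{3/2}$. Your trigonometric finish is only a repackaging of the paper's two closing steps: the identity $a-b=\sin(\alpha+\beta)\sin(\alpha-\beta)$ together with $\sin^2(\alpha+\beta)\le 1$ is exactly the paper's AM--GM claim that $1-\bigl(\sqrt{ab}+\sqrt{(1-a)(1-b)}\bigr)^2\ge(a-b)^2$, and the factorization $(2+u)^3-27u=(u-1)^2(u+8)$ is equivalent to the paper's bound $1-(1-u)^{1/3}\ge u/3$.
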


\begin{proof} Let $c$ be a number between $0$ and $1$,
so that $q_1$ divides the segment $q_0 q_2$ in ratio $c:(1-c)$. Then, writing
$\A(xyz)$ for the area of the triangle with vertices $x,y,z$,
\[
\A(p_0q_0q_1) = a \A(p_0p_1q_1) = ac \A(p_0p_1q_2) = abc \A(p_0p_1p_2),
\]
showing $\A(T_1)=abc\A(T)$. Similarly, $\A(T_2) = (1-a)(1-b)(1-c) \A(T).$ Hence
we have to prove the following fact: $0 \leq a,b,c \leq 1$ implies
\begin{equation} \label{abc}
1 - \sqrt[3]{abc} - \sqrt[3]{(1-a)(1-b)(1-c)} \geq \frac{1}{3}
(a-b)^2.
\end{equation}
Denote $Q$ the left hand side of (\ref{abc}). By computing the
derivative of $Q$ with respect to $c$ yields that for fixed $a$ and
$b$, $Q$ is minimal when
\[
c=\frac{\sqrt{ab}}{\sqrt{ab}+\sqrt{(1-a)(1-b)}} \, .
\]
It is easy to see that with this $c$,
\[
\sqrt[3]{abc} + \sqrt[3]{(1-a)(1-b)(1-c)} =
\left(\sqrt{ab}+\sqrt{(1-a)(1-b)}\right)^{2/3}.
\]
Now, denote $\left(\sqrt{ab}+\sqrt{(1-a)(1-b)}\right)^2$ by $1-u$, that is,
\[
u = a+b-2ab-2\sqrt{ab(1-a)(1-b)}.
\]
We claim that $u \geq (a-b)^2$: this is the same as
\[
a-a^2 + b- b^2 \geq 2\sqrt{(a-a^2)(b-b^2)},
\]
which is just the inequality between the arithmetic and geometric means for the
numbers $a-a^2,b-b^2\ge 0$. Therefore, using $u \leq 1$,
\[
Q \geq 1-(1-u)^{1/3} \geq \frac {1}{3}\,u \geq \frac{1}{3}\,(a-b)^2.
\qedhere
\]
\end{proof}

Theorems \ref{mobius} and \ref{mobi+} imply the following statement.

\begin{cor}\label{erinto}
If $q \in \G$ and $\ell$ is tangent to $\G$ at $q$, then with the above
notations, $a=b$.
\end{cor}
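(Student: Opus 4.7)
The plan is to combine the two preceding results directly: Theorem~\ref{mobius} pins down the equality case of the subadditivity inequality, while Theorem~\ref{mobi+} quantifies precisely the deficit in terms of $(a-b)^2$. Putting them side by side forces the deficit to vanish, which is the content of the corollary.

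More concretely, first I would invoke Theorem~\ref{mobius}: since $q \in \Gamma$ and $\ell$ is tangent to $\Gamma$ at $q$, the equality case applies and we have
\[
\sqrt[3]{\A(T_1)}+\sqrt[3]{\A(T_2)} = \sqrt[3]{\A(T)}.
\]
Next, I would apply Theorem~\ref{mobi+} to the same configuration (same $\ell$, $q_0$, $q_2$, $q$, same triangles $T_1,T_2$), which yields
\[
\sqrt[3]{\A(T_1)}+\sqrt[3]{\A(T_2)} \leq \sqrt[3]{\A(T)}-\sqrt[3]{\A(T)}\,\frac{1}{3}(a-b)^2.
\]
Combining the equality with the inequality gives $\sqrt[3]{\A(T)}\,\frac{1}{3}(a-b)^2 \leq 0$, and since $\A(T)>0$, this forces $(a-b)^2=0$, i.e.\ $a=b$.

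There is essentially no obstacle here: the corollary is purely a bookkeeping consequence, and the entire substance of the argument has already been discharged in the proof of Theorem~\ref{mobi+} (in particular in establishing $u \geq (a-b)^2$ via the arithmetic--geometric mean inequality). The only thing to be careful about is that both theorems are being applied to exactly the same geometric data, so that the two estimates for $\sqrt[3]{\A(T_1)}+\sqrt[3]{\A(T_2)}$ can legitimately be chained; this is immediate from the way the setups are stated just before each theorem.
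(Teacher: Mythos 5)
Your argument is correct and is exactly the one the paper intends: the text simply states that Theorems~\ref{mobius} and~\ref{mobi+} imply the corollary, and your chaining of the equality case of Theorem~\ref{mobius} with the deficit bound of Theorem~\ref{mobi+} to force $(a-b)^2\le 0$ is that implication spelled out.
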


Since an affine transformation does not influence the value of $L_n$, the
underlying triangle $T$ can be chosen arbitrarily. Our standard model for $T$
is the one with $p_0=(0,1)$, $p_1=(0,0)$, $p_2=(1,0)$ as the vertices of $T$.
In this case the special parabola $\G$ is given by the equation $\sqrt{x} +
\sqrt{y}=1$.

Finally, we will give strong concentration results for $L_n$ with the help of
Talagrand's following inequality \cite{tal}. Suppose that $Y$ is a real-valued
random variable on a product probability space $\Omega^{\otimes n}$, and that
$Y$ is 1-Lipschitz with respect to the Hamming distance, meaning that
\[
|Y(x)-Y(y)| \leq 1
\]
whenever $x$ and $y$ differ in one coordinates. Moreover assume that $Y$ is
{\sl $f$-certifiable}. This means that there exists a function $f: \mathbb{N}
\rightarrow \mathbb{N}$ with the following property: for every $x$ and $b$ with
$Y(x) \geq b$ there exists an index set $I$ of at most $f(b)$ elements, such
that $Y(y) \geq b$ holds for every $y$ agreeing with $x$ on $I$. Let $m$ denote
the median of $Y$. Then for every $s>0$ we have
\begin{equation}\label{tala}
\begin{aligned}
&\PP (Y \leq m-s)\leq 2 \, \mathrm{exp}\left(\frac{-s^2}{4f(m) } \right),\\
&\PP (Y \geq m+s)\leq 2 \, \mathrm{exp}\left(\frac{-s^2}{4f(m+s) } \right).
\end{aligned}
\end{equation}

\section{Expectation}\label{expect}

The aim of this section is to prove of Theorem~\ref{limit}. We also establish
upper and lower bounds for the constant $\alpha$.

\begin{proof}[Proof of Theorem~\ref{limit}]
We start with an upper bound on $\E L_n$:
\begin{equation}\label{felso}
\limsup_{n \rightarrow \infty} \frac{\E L_n}{\sqrt[3]{n}} \leq
\sqrt[3]{2}e = 3.4248\dots.
\end{equation}
It is shown in \cite{B99}, equation (5.3) (cf. \cite{BRSZ} as well)
that the probability of $k$ uniform independent random points in $T$
forming a convex chain is
\[
\frac{2^k}{k! \,(k+1)!} \;.
\]
Therefore we can estimate  the probability that a convex chain of length $k$
exists:
\begin{equation*}\label{upp}
\PP(L_n\ge k) \le {n \choose k} \frac{2^k}{k! \,(k+1)!}\;.
\end{equation*}
We use this estimate and Stirling's formula to bound $\E L_n$.
Assume $\gamma > \sqrt[3]{2}e$. Then
\begin{align*}
\E L_n &= \sum_{k=0}^{n} \PP(L_n>k) \leq \sum_{k=0}^{n} \PP(L_n \ge k)\\
&\leq \gamma \sqrt[3]{n} + \sum_{k > \gamma \sqrt[3]{n}}
\PP(L_n\ge k) \\
&\leq  \gamma \sqrt[3]{n} + \sum_{k > \gamma \sqrt[3]{n}} {n \choose
k} \frac{2^k}{k! \,(k+1)!}\\
&\leq \gamma \sqrt[3]{n} + \sum_{k > \gamma \sqrt[3]{n}} \frac{(2n)^k}{(k!)^3} \\
&\leq \gamma \sqrt[3]{n} + \sum_{k > \gamma \sqrt[3]{n}}
\frac{1}{\sqrt{(2\pi \gamma)^3 n}} \left(\frac{2\,e^3}{\gamma^3}\right)^k\\
&\leq \gamma \sqrt[3]{n} + n^{-1/2} C,
\end{align*}
where $C = \gamma^3 / (\gamma^3 - 2 e^3)$ is a positive constant.
Since this holds for arbitrary $\gamma > \sqrt[3]{2}\,e$,
(\ref{felso}) is proved.

Next, we give a lower bound for $\E L_n$. We apply Corollary~\ref{areasum} with
$t=2\A(T)/n$, obtaining a set of triangles $\{T_1, \dots, T_k\}$, where for $1
\leq i \leq k-1$, the area of $T_i$ is $t$, and the area of $T_k$ is less than
$t$. By (\ref{areasum}), $ k \geq \sqrt[3]{n/2}$. Let $X_n$ be the uniform
independent sample from $T$. Let $q_i$ be a point of $T_i \cap X_n$, provided
that $T_i \cap X_n \neq \emptyset$. The collection of such $q_i$'s forms a
convex chain. Hence, the expected length of the longest convex chain is at
least the expected number of non-empty triangles $T_i$:
\begin{align*}
\E L_n \; &\geq \; \sum_1^k \PP\big(T_i\cap X_n \ne \emptyset\big)
\; \geq \;
(k-1)\left(1-\left(1-\frac{2}{n}\right)^n\right) \\
&\ge \left(\sqrt[3]{\frac{n}{2}}-1\right) \, \left(1-e^{-2}\right)
\approx 0.6862 \, n^{1/3}.
\end{align*}

\noindent What we have proved so far is that
\[
\underline{\al} = \liminf_{n \rightarrow \infty}n^{-1/3}\E L_n >
0.6862,\mbox{ and } \overline{\al}= \limsup_{n \rightarrow \infty}
n^{-1/3}\E L_n < 3.4249.
\]
We show next that the limit exists. Suppose on the contrary that
$\underline{\al}< \overline{\al}$.

The idea of the proof is to use Corollary~\ref{areasum} again with parameters
chosen so that the expected length of the longest convex chains in the small
triangles is close to $\overline{\al}$, while for the triangle $T$, $\E L_n$ is
close to $\underline{\al}$. This will result in a contradiction.

Choose a large $n$ with $ \E L_n\geq (1-\eps)\,\overline{\al} \sqrt[3]{n}$, and
an $N$ much larger than $n$ with $ \E L_N\le (1+\eps)\, \underline{\al}
\sqrt[3]{N}$. Here $\eps$ is a suitably small positive number. Define $n_1$ by
the equation $n = n_1 - \sqrt{n_1 \log n_1}$.

Choose $N$ uniform, independent random points from triangle $T$. Define $t =
n_1 / N$. Hence the expected number of points in a triangle of area $t$
(contained in $T$) is $n_1$.

Applying  Corollary \ref{areasum} with this $t$ yields the set of triangles
$T_1, \dots, T_k$, where  $k > \sqrt[3]{N/ n_1}$.

Denote by $k_i$ the number of points in $T_i$, and by $\E L^i$ the expectation
of the length of the longest convex chain in $T_i$. Clearly, $k_i$ has binomial
distribution with mean $n_1$, except for the last triangle where the mean is
less than $n_1$.

Since the union of convex chains in the triangles $T_i$ is a convex chain in
$T$ between $(0,0)$ and $(1,1)$, by the estimate (\ref{hoeffding}) we have
\begin{align*}
 \E L_N &\geq \sum_{i \leq k}  \E L^i \; \ge \;
 \sum_{i\leq k-1} \PP (k_i > n)  \E L_{n}\\
&\geq \sum_{i\leq k-1} \left(1- n_1^{-1/2}\right) (1-\eps) \,
\overline{\al}
\sqrt[3]{n}\\
&\geq \left(\sqrt[3]{N/ n_1}-1\right)\left(1- n_1^{-1/2}\right)
(1-\eps) \,\overline{\al} \sqrt[3]{n}\\
&=\overline{\al}\,
\sqrt[3]{N}(1-\eps)\left(1-n_1^{-1/2}\right)\left(\sqrt[3]{n/n_1}-\sqrt[3]{n/N}\right)\\
&\ge \overline{\al} \,\sqrt[3]{N}(1-2\eps),
\end{align*}
where the last inequality holds if $n$ is chosen large enough and
$N$ is chosen even larger with $n/N$ very small. Thus $(1+\eps)\,
\underline{\al} \ge (1-2\eps)\, \overline{\al}$ which, for small
enough $\eps$, contradicts our assumption $\underline{\al} <
\overline{\al}$.
\end{proof}

 The lower bound $\E L_n \ge 0.6862 \, n^{1/3}$ is
probably the easiest to prove. A better estimate, also mentioned by Enriquez
\cite{En}, can be established by the following sketch. Assume $T$ is the
standard triangle and let $D$ denote the domain of $T$ lying above $\Gamma$.
Then $\A(D) = 1/3$, so the expected number of points in $D$ is $2n/3$, and the
number of points is concentrated around this expectation. The affine perimeter
of $D$ is $2 \sqrt[3]{1/2}$ (see \cite{B99}), and (\ref{rsexp}) yields that the
expected number of vertices of $\conv(D \cap X_n)$ is about
\[ \Gamma\left(\frac{5}{3}\right) \sqrt[3]{\frac{2}{3}}
\left(\frac{1}{3} \right)^{-1/3} 2\sqrt[3]{1/2} \, \sqrt[3]{2n/3} \approx
1.5772 \,\sqrt[3]{n}.\]
Since most vertices are located next to the parabola,
the majority of them form a convex chain, and so
\begin{equation}\label{also}
\liminf_{n \rightarrow \infty} \frac{ \E L_n}{\sqrt[3]{n}} \geq
1.5772\dots.
\end{equation}
This estimate leads to slightly stronger quantitative results, and thus from
now on, we will use it instead of $\alpha > 0.6862$.

\newpage

\section{Concentration results}\label{concentr}

In this section, we prove strong concentration results for $L_n$ and related
variables. We will use Talagrand's inequality (\ref{tala}), see
Section~\ref{geotools}. When applied to $L_n$, this yields a concentration
result about the median, what we denote by $m_n$. However, we want to prove
that $L_n$ is close to its expectation. Luckily, concentration ensures that the
mean and the median are not far apart; in fact, it will turn out that $\lim
n^{-1/3}m_n = \al$.

First, we need a lower bound on $m_n$.
\begin{lemma}\label{median}  Suppose that $\log n > 25$.
Then $$m_n \ge \sqrt[3]{3n/\log n}.$$
\end{lemma}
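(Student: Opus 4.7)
To prove $m_n \geq \sqrt[3]{3n/\log n}$ it suffices to show $\PP(L_n \geq K) > 1/2$ for the integer $K := \lceil \sqrt[3]{3n/\log n}\, \rceil$, since then any median satisfies $m_n \geq K \geq \sqrt[3]{3n/\log n}$.

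The plan is to run a sharper version of the greedy chain construction from the lower-bound portion of the proof of Theorem~\ref{limit}. I would apply Corollary~\ref{areasum} with $t := \A(T)/K^3$: since $\sqrt[3]{\A(T)/t} = K$ is an integer, the corollary produces exactly $k = K+1$ tangent triangles $T_1, \dots, T_{K+1}$ along the special parabola $\G$, with $\A(T_i) = t$ for $i \leq K$. Any transversal selection---one sample point from each non-empty $T_i$---lies in convex position, so $L_n \geq K$ on the event $E := \{T_i \cap X_n \neq \emptyset \text{ for all } i \leq K\}$.

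The count of $X_n$ in each $T_i$ is binomial with mean $\lambda := n/K^3$, so $\PP(T_i \text{ empty}) \leq e^{-\lambda}$ and a union bound gives $\PP(E^c) \leq K e^{-\lambda}$. Writing $a := \sqrt[3]{3n/\log n}$, the bounds $a \leq K \leq a+1$ and the identity $n = a^3 \log n/3$ yield $\lambda \geq (1 - 3/(a+1))\,\log n/3$, hence
\[
\PP(E^c) \;\leq\; (a+1)\, n^{-1/3}\, n^{1/(a+1)}.
\]
The leading factor $(a+1)\, n^{-1/3}$ equals $\sqrt[3]{3/\log n}$ up to a negligible additive $n^{-1/3}$, and the correction $n^{1/(a+1)} = e^{\log n/(a+1)}$ decreases monotonically to $1$ in $n$ (since $\log n/(a+1)$ is a decreasing function of $n$ for $\log n$ in the regime of interest). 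A direct numerical evaluation at the threshold $\log n = 25$ shows the right-hand side is strictly below $1/2$, and monotonicity extends the bound to every $n$ with $\log n > 25$.

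The single obstacle is arithmetic rather than structural. Because $a$ is typically irrational, the mean $\lambda$ sits slightly below $\log n/3$, and one pays a factor $n^{1/(a+1)} > 1$ for the correction. Since $\sqrt[3]{3/\log n}$ is only marginally below $1/2$ when $\log n \approx 25$, one must check carefully that this correction does not bump the product above $1/2$. The numerical threshold $\log n > 25$ in the hypothesis is calibrated precisely so that the inequality survives.
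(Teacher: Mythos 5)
Your argument is correct and is essentially the paper's own proof (which appears as the $s=1$ case of Lemma~\ref{mediansub}): partition along $\G$ into equal-area tangent triangles via Corollary~\ref{areasum}, note that one point from each non-empty triangle yields a convex chain, bound the emptiness probabilities exponentially, and apply a union bound to get $\PP(L_n\ge\sqrt[3]{3n/\log n})>1/2$. The only difference is the parameter choice: the paper takes $t=\A(T)\log n/(3n)$, so each triangle has mean exactly $\log n/3$ and emptiness probability at most $n^{-1/3}$, giving the clean bound $\sqrt[3]{3/\log n}+n^{-1/3}<1/2$ without the correction factor $n^{1/(a+1)}$ and the razor-thin numerics (about $0.4995$ versus $1/2$ at $\log n=25$) that your choice $t=\A(T)/K^3$ forces you to verify.
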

\noindent Since this is a special case of Lemma \ref{mediansub}, the proof will
be given there.

Here comes our first, basic concentration result for $L_n$.

\begin{theorem}\label{conc1}
For every $\gamma>0$ there exists a constant $N$, such that for every $n>N$,
$$\PP \big(|L_n -  \E L_n| > \gamma \sqrt{\log n} \; n^{1/6}\big) <
n^{-\gamma^2/14}.  $$
\end{theorem}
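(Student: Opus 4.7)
The plan is to apply Talagrand's inequality~(\ref{tala}) to the random variable $L_n$ defined on the product space $T^n$, obtain concentration around the median $m_n$, and then transfer to concentration around the mean using the fact that $|\E L_n - m_n|$ is of lower order than the deviation scale $\sqrt{\log n}\, n^{1/6}$.

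First I would check the two structural hypotheses of Talagrand's inequality. The random variable $L_n$ is clearly $1$-Lipschitz with respect to the Hamming distance on $T^n$: altering a single one of the $n$ sample points can destroy at most one vertex of a longest convex chain and create at most one new one, so $|L_n(x)-L_n(y)|\leq 1$ when $x,y$ differ in a single coordinate. It is also $f$-certifiable with $f(b)=b$, since whenever $L_n(x)\geq b$ there is a convex chain of length at least $b$ in $x$, and the corresponding $b$ indices form the required certificate: any configuration agreeing with $x$ on these indices still contains that chain.

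Applying~(\ref{tala}) to $L_n$ with these parameters gives, for $s>0$,
\[
\PP(L_n\leq m_n-s)\leq 2\exp\!\Bigl(-\frac{s^2}{4m_n}\Bigr),\qquad \PP(L_n\geq m_n+s)\leq 2\exp\!\Bigl(-\frac{s^2}{4(m_n+s)}\Bigr).
\]
Now I would invoke the upper bound from the proof of Theorem~\ref{limit}, namely $\overline{\alpha}<3.425$. By concentration around $m_n$ (applied a first time with, say, $s=n^{1/4}$) one gets that $m_n=(1+o(1))\E L_n$, so for every sufficiently large $n$,
\[
4(m_n+s) \leq 14\,n^{1/3}
\]
whenever $s=\gamma\sqrt{\log n}\,n^{1/6}$, since $4\overline\alpha<13.7$ leaves slack room for both the median/mean gap and the additive $s$, which is of order $n^{1/6}\sqrt{\log n}=o(n^{1/3})$. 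Substituting this bound into Talagrand's estimate yields
\[
\PP(|L_n-m_n|>s)\leq 4\exp\!\Bigl(-\frac{s^2}{14\,n^{1/3}}\Bigr)=4\,n^{-\gamma^2/14}.
\]

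Finally I would deduce the same bound with $\E L_n$ in place of $m_n$. Writing $\E|L_n-m_n|=\int_0^\infty \PP(|L_n-m_n|>t)\,dt$ and splitting the integral at, say, $t_0=\sqrt{m_n\log m_n}$, the trivial bound gives $t_0$ on the first piece while the tail estimate above (with a generic constant in place of $\gamma$) controls the second piece by $O(n^{1/6})$. Hence $|\E L_n-m_n|=O(n^{1/6})$, which is $o(s)$. Therefore the event $|L_n-\E L_n|>\gamma\sqrt{\log n}\,n^{1/6}$ is contained in $|L_n-m_n|>(\gamma-\eta_n)\sqrt{\log n}\,n^{1/6}$ with $\eta_n\to 0$, and the Talagrand bound, with the small slack between $4\overline{\alpha}$ and $14$, absorbs the factor $4$ and the $(\gamma-\eta_n)^2\to\gamma^2$ loss to give the desired $n^{-\gamma^2/14}$ for all $n>N$. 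The main obstacle is precisely this book-keeping of constants: the denominator $14$ in the statement is not entirely transparent, and one must exploit the gap $14-4\overline{\alpha}>0.3$ to simultaneously swallow the median-to-mean transfer and the Talagrand constant $4$ in the exponent.
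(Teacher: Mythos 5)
Your overall route is the same as the paper's: Talagrand's inequality with certifying function $f(b)=b$, concentration about the median $m_n$, the upper bound $\overline{\al}<3.425$ to control the denominator $4(m_n+s)$, and then a median-to-mean transfer whose error must be negligible against the deviation scale $\gamma\sqrt{\log n}\,n^{1/6}$. The structural steps (Lipschitz property, certifiability, the computation $s^2/(14\,n^{1/3})=\gamma^2\log n/14$) are fine, and your preliminary step replacing the paper's Lemma~\ref{median} by a crude first application of concentration is a legitimate variant, provided you first record an a priori bound $m_n=O(n^{1/3})$ (e.g.\ $m_n\le 2\,\E L_n$ by Markov), without which Talagrand's bound with $s=n^{1/4}$ is vacuous.

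The genuine gap is in the median-to-mean comparison. You split $\E|L_n-m_n|=\int_0^\infty \PP(|L_n-m_n|>t)\,dt$ at $t_0=\sqrt{m_n\log m_n}$ and use the trivial bound $\PP\le 1$ on $[0,t_0]$; that contributes $t_0\approx\sqrt{\overline{\al}/3}\;n^{1/6}\sqrt{\log n}$, so what your argument actually yields is $|\E L_n-m_n|=O\big(n^{1/6}\sqrt{\log n}\big)$, not the claimed $O(n^{1/6})$. This is not a harmless loss: the error is then of the \emph{same order} as the deviation $s=\gamma\sqrt{\log n}\,n^{1/6}$, with constant roughly $1$, so $|\E L_n-m_n|$ is not $o(s)$ for fixed $\gamma$; for $\gamma\lesssim 1$ the inclusion of events fails outright, and even for moderate $\gamma$ the exponent degrades from $\gamma^2$ to $(\gamma-C)^2$ with $C\approx 1$, which the tiny slack between $4\overline{\al}\approx 13.7$ and $14$ cannot absorb (one would need $\gamma$ of order several hundred). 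Since the theorem is claimed for every $\gamma>0$, this breaks the final bookkeeping you rightly identify as the crux. The repair is exactly what the paper does in its estimate of $I_1$: on the first range do not use the trivial bound, but integrate the sub-Gaussian Talagrand tail, $\int_0^\infty 4\exp\big(-t^2/(4(1+c)m_n)\big)\,dt=O(\sqrt{m_n})=O(n^{1/6})$, which gives the genuinely smaller error $|\E L_n-m_n|=O(n^{1/6})$ and makes the rest of your constant chase go through.
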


\begin{proof} The statement cries out for
the application of Talagrand's inequality. The random variable $L_n$ satisfies
the conditions with $f(b)=b$, since fixing the coordinates of a maximal chain
guarantees that the length will not decrease, and changing one of the points
changes the length of the maximal chain by at most one. Write $m=m_n$ for the
median in the present proof. Setting $s=\be\sqrt{m \log m}$ where $\be$ is an
arbitrary positive constant, (\ref{tala}) implies that
\begin{align*}
\PP \big(|L_n-m| \geq \be \sqrt{m \log m}\,\big) &< 4
\exp\left\{\frac
{-\be^2 m\log m}{4(m+\be \sqrt{m\log m})} \right\}\\
&= 4 \exp\bigg\{\frac{-\be^2\log m}{4(1+\be \sqrt{m^{-1}\log m})} \bigg\}
\end{align*}

\noindent Define now $\be_0= c \sqrt{m / \log m}$ with a constant $c>0$, which
will be specified at the end of the proof in order to give the correct
estimate. If $\be \leq \be_0$, then $\be\sqrt{m^{-1}\log m} \le c$, and the
denominator in the exponent is at most $4(1+c)$. Thus
\begin{equation}\label{betasmall}
\PP \big(|L_n-m| \geq  \be \sqrt{m \log m}\, \big) <
4m^\frac{-\be^2}{4(1+c)}.
\end{equation}
On the other hand, for $\be > \be_0$ we have
\begin{align}\label{betabig}
\PP\big(|L_n-m| \geq  \be \sqrt{m \log m}\, \big) &< \PP
\big(|L_n-m| \geq
\be_0\sqrt{m\log  m}\,\big)\\
\nonumber &= 4\exp \left(-m \, \frac{c^2}{4(1+c)}\right).
\end{align}
Next, we compare the median and the expectation of $L_n$ using the following
inequality:
\begin{equation*}
| \E L_n - m | \leq   \E |L_n -m |= \int_{0}^{\infty} \;\PP(|L_n-m |
> x) dx.
\end{equation*}
The range of $L_n$ is $[1,n]$, so the integrand is $0$ if $x>n$.
Substitute $x=\be\sqrt{m\log m}$, and divide the integral into two
parts at $\be_0$:
\[
|\E L_n-m| \leq 4\sqrt{m \log m}(I_1+I_2),
\]
where
\begin{equation}\label{I1}
I_1=\int_0^{\be_0}m^{-\be^2/4(1+c)}d\be<\int_0^{\infty}m^{-\be^2/4(1+c)}d\be=
\sqrt{\frac {\pi (1+c)}{\log m}},
\end{equation}
and
\begin{equation}\label{I2}
I_2=\int_{\be_0}^{n/\sqrt{m\log m}} \exp \left(-m \, \frac{c^2}{4(1+c)}\right) d\be
< n \exp \left(-m \, \frac{c^2}{4(1+c)}\right).
\end{equation}
By Lemma \ref{median}, $n < m^4$, so $I_2< m^4\exp (-m \, c^2/4(1+c))$. Since
$m_n$ goes to infinity as $n$ increases (again by Lemma
\ref{median}), the bound on $I_2$ is eventually much smaller than the one on $I_1$:
\begin{align}\label{exp-med}
|\E L_n-m| &\leq 4\sqrt{m\log m}(I_1+I_2) \nonumber \\
&<4\sqrt{\pi (1+c) m}+4\sqrt{m\log m}\,m^4 \exp  \left(-m \,
\frac{c^2}{4(1+c)}\right) \\
&\le 5 \sqrt{\pi (1+c)} \sqrt m \nonumber
\end{align}
for all large enough $n$. Hence $\E L_n$ is of the same order of
magnitude as $m_n$, and we obtain
\begin{equation}\label{medianlim}
\lim n^{-1/3}\E L_n= \lim n^{-1/3} m_n =\al.
\end{equation}
For fixed $\gamma$ and for large enough $n$, (\ref{exp-med}) implies
\begin{align*}
&\PP \big(|L_n -  \E L_n| > \gamma \sqrt{\log n} \;
n^{1/6}\big)\\
&\leq \PP \big(|L_n -  m| > \gamma \sqrt{\log n} \;
n^{1/6}-|\E L_n-m|\, \big)\\
&\leq \PP \big(|L_n -  m| > \gamma \sqrt{\log n} \;
n^{1/6}- 5 \sqrt{\pi (1+c)}\sqrt{m}\, \big).
\end{align*}
Using $m_n \le 3.43 n^{1/3}$ from (\ref{felso}) and
(\ref{medianlim}), it is easy to see that
\begin{align*}
\gamma \sqrt{\log n} \;n^{1/6}- 5 \sqrt{\pi (1+c)\, m} &\ge \g \sqrt{m} \left(
 \sqrt{\frac{3 \log m- \log 41}{3.43}}-\frac { 5 \sqrt{\pi (1+c)}}{\g} \right)\\
&\ge \g \sqrt {\frac {3 }{3.44}} \sqrt {m \log m}.
\end{align*}
Since for large enough $n$, $\g \sqrt {3/3.44} < \beta_0= c
\sqrt{m / \log m}$, (\ref{betasmall}) finally implies
\begin{align*}
&\PP \left(|L_n -  \E L_n| \geq \gamma \sqrt{\log n} \;n^{1/6}\right)\\
&\leq \PP \left(|L_n - m|\ge \g \sqrt {\frac{3} {3.44}}\sqrt{m \log m }\,\right)\\
&\leq 4 m^{- 3 \gamma^2  / 13.76 (1+c) }  < n^{-\gamma^2 / 14,}
\end{align*}
where the last inequality follows from (\ref{medianlim}) and choosing $c=0.01$.
\end{proof}

We remark that the constant in the exponent is far from being the best
possible, and we have made no attempt here to find its optimal value. In
general, Talagrand's inequality is too general to give the precise
concentration, see Talagrand's comments on this in \cite{tal}. Also, we note
that the proof of Theorem~\ref{conc1} also yields the slightly stronger
estimate $n^{- \gamma^2 (1/ 14 + \vartheta})$ for a sufficiently small
$\vartheta$.

For the proof of Theorem~\ref{limsh} we need to consider
subtriangles $S$ of $T$, that is, triangles of the form
$S=\conv\;\{a,b,c\}$ with $a,b,c \in T$, while $X_n$ is still a
random sample from $T$. We will need to estimate the concentration
of the longest convex chain from $X_n$ in $S$. Since this random
variable depends only on the relative area of $S$, we may and do
assume that $T$ is the standard triangle and $S=\conv\{(0,\sqrt
{s}),(0,0),(\sqrt {s},0)\}$. Thus $\A(S)=s/2$. Write $L_{s,n}$ for
the length of the longest convex chain in $S$ from $(0,\sqrt {s})$ to
$(\sqrt {s},0)$, and $m_{s,n}$ for its median. In the
following statements, we consider the situation when $sn/2$, the
expected number of points from $X_n$ in $S$, tends to infinity.

As in the proof of Theorem~\ref{conc1}, we need two estimates: a lower bound
for the median guarantees that the mean and the median are close to each other,
while an upper bound for the expectation (or for the median) is needed to
derive the inequality in terms of $n$. Here comes the lower bound; the case
$s=1$ is Lemma \ref{median}.

\begin{lemma}\label{mediansub} Suppose that $\log (ns) > 25$.
Then $$m_{s,n} \ge \sqrt[3]{3ns/\log (ns)}.$$
\end{lemma}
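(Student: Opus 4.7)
The plan is to mirror the lower-bound argument for $\E L_n$ used at the end of the proof of Theorem~\ref{limit}, upgrading it from an expectation estimate to a high-probability estimate via a union bound. Let $\ell = \log(ns)$ and set $N_0 = \sqrt[3]{3ns/\ell}$. It suffices to show that $\PP(L_{s,n} \geq N_0) \geq 1/2$, since this forces $m_{s,n} \geq N_0$.

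First I would apply Corollary~\ref{areasum} inside the subtriangle $S$: with the choice $t = \A(S)/K^3 = s/(2K^3)$, where $K = \lceil N_0 \rceil$, the corollary produces $K$ tangent triangles $T_1, \ldots, T_K$ of area $t$ (plus at most a smaller leftover) along the special parabola arc of $S$. The very construction of these tangent triangles ensures that any choice of one point $p_i \in T_i \cap X_n$ from each of them automatically constitutes a convex chain in $S$ from $(0, \sqrt{s})$ to $(\sqrt{s}, 0)$ of length at least $K \geq N_0$. The whole problem therefore reduces to showing that every $T_i$ is nonempty with probability at least $1/2$.

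Since $\A(T) = 1/2$, a single uniform sample lands in $T_i$ with probability $2t$, so $\PP(T_i \cap X_n = \emptyset) \leq (1 - 2t)^n \leq \exp(-2nt) = \exp(-ns/K^3)$. The union bound then gives
\[
\PP\big(\text{some } T_i \text{ is empty}\big) \;\leq\; K \exp(-ns/K^3).
\]
Under the hypothesis $\ell > 25$ the quantity $N_0$ (and hence $K$) is very large, so $K^3 \leq (N_0 + 1)^3$ exceeds $N_0^3 = 3ns/\ell$ only by a factor arbitrarily close to $1$. Consequently $\exp(-ns/K^3)$ is essentially $(ns)^{-1/3}$, and the right-hand side above collapses to a quantity close to $N_0 (ns)^{-1/3} = (3/\ell)^{1/3}$, which is strictly below $1/2$ when $\ell \geq 25$.

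The main obstacle is the numerical calibration. The constants $3$ in the statement and the threshold $25$ are tuned so that the union bound $K \exp(-ns/K^3)$ lies just below $1/2$; the slack introduced by rounding $N_0$ up to the integer $K$ produces a factor $(1 + 1/N_0)^3$ in the relevant exponent, and this factor must be absorbed without degrading the constant $3$. The threshold $\ell > 25$ is precisely what makes $N_0$ large enough to render that correction negligible, and the bookkeeping of this step is where most of the actual computation will concentrate.
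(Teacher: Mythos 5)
Your proposal is correct and follows essentially the paper's own argument: apply Corollary~\ref{areasum} inside $S$ to obtain a chain of equal-area tangent triangles along its special parabola, observe that one sample point from each nonempty triangle yields a convex chain, and control the probability that some triangle is empty by a union bound, which forces the median up to $\sqrt[3]{3ns/\log(ns)}$. The only difference is the calibration: the paper fixes $t=\A(S)\log(ns)/(3ns)$, so each triangle is empty with probability at most $(ns)^{-1/3}$ and the union bound is at most $\sqrt[3]{3/\log(ns)}+(ns)^{-1/3}<1/2$ with comfortable slack, whereas your choice $t=\A(S)/\lceil N_0\rceil^{3}$ leads to the same conclusion but with the failure probability only barely below $1/2$ (about $0.4995$ as $\log(ns)\downarrow 25$), so the bookkeeping you flag does go through, just tightly.
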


\begin{proof} Set $t=(\A(S) \log (ns))/(3ns)$, and apply Corollary~\ref{areasum}
to the triangle $S$, resulting in the set of triangles $T_1, \dots T_k$. Then
for the number of triangles we have
\[
\sqrt[3]{3ns/\log (ns)}<k \leq \sqrt[3]{3ns/\log (ns)} +1.
\]
For any $i \in \{1,\dots,k$\}, the probability that $T_i$ contains
no point of $X_n$  is
\begin{align*}
\PP(T_i\cap X_n=\emptyset)&\leq\left(1-\frac {\log
(ns)}{3ns}\right)^n
\\ &< \exp\left(\frac{-\log(ns)}{3s}\right)= (ns)^{-1/3s}<
(ns)^{-1/3}.
\end{align*}
Hence the union bound yields
\begin{align*}
\PP\big(L_{n,s} > \sqrt[3]{3ns/\log (ns)}\,\big) &\ge  1-\PP(T_i\cap
X_n =
\emptyset \mbox{ for some }i \le k) \\
&\ge 1-k\,(ns)^{-1/3}\\
&\ge 1- \big( \sqrt[3]{3/\log (ns)} + (ns)^{-1/3} \big),
\end{align*}
which is greater than $1/2$ by the assumption.
\end{proof}

Obtaining an upper bound for the mean is slightly more delicate; note that in
the lemma below, $s$ need not be fixed.
\begin{lemma}\label{meansub} Assume $ns \to \infty$. Then
$$\lim \, (ns)^{-1/3}\E L_{s,n} =\alpha$$
where $\alpha$ is the same constant as in Theorem~\ref{limit}.
\end{lemma}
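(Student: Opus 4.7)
The plan is to condition on $K := |X_n \cap S|$ and reduce to $L_k$ in the standard triangle via an affine transformation, then combine Theorem~\ref{limit} with the concentration of the binomial distribution.

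First I would observe that $K$ has distribution $B(n, s)$ since $\A(S)/\A(T) = s$, and that given $K = k$, the $k$ points of $X_n \cap S$ are uniform i.i.d.\ in $S$. An affine map sends $S$ to the standard triangle $T$, preserves uniform measure and convex position, and sends the distinguished endpoints $(0,\sqrt s)$, $(\sqrt s, 0)$ of $S$ to $(0,1)$, $(1,0)$. Hence the conditional distribution of $L_{s,n}$ given $K = k$ is exactly that of $L_k$, so
\[
\E L_{s,n} \;=\; \sum_{k=0}^{n} \binom{n}{k} s^k (1-s)^{n-k}\, \E L_k \;=\; \E\bigl[\,g(K)\,\bigr], \qquad g(k) := \E L_k.
\]

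For the upper bound, fix $\eps > 0$; by Theorem~\ref{limit} there exists $k_0 = k_0(\eps)$ such that $g(k) \le (\al + \eps)k^{1/3}$ for $k \ge k_0$, and trivially $g(k) \le k_0$ for $k < k_0$. Applying Jensen's inequality to the concave function $x \mapsto x^{1/3}$ gives
\[
\E L_{s,n} \;\le\; k_0 + (\al + \eps)\, \E K^{1/3} \;\le\; k_0 + (\al + \eps)(ns)^{1/3}.
\]
Since $ns \to \infty$, dividing by $(ns)^{1/3}$ and letting $\eps \to 0$ yields $\limsup (ns)^{-1/3} \E L_{s,n} \le \al$.

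For the lower bound, fix $\eps, \delta > 0$ and choose $k_0$ so that $g(k) \ge (\al - \eps) k^{1/3}$ for $k \ge k_0$. Let $A$ be the event $\{K \ge (1-\delta)ns\}$; by the binomial tail estimate~(\ref{hoeffding}), $\PP(A) \to 1$ as $ns \to \infty$. Since $ns \to \infty$ and $k_0$ is fixed, eventually $(1-\delta)ns \ge k_0$, so on $A$ we have
\[
g(K) \;\ge\; (\al - \eps) K^{1/3} \;\ge\; (\al - \eps)(1-\delta)^{1/3}(ns)^{1/3}.
\]
Taking expectations over $A$ alone, $\E L_{s,n} \ge \PP(A)(\al - \eps)(1-\delta)^{1/3}(ns)^{1/3}$, and letting $n \to \infty$ and then $\eps, \delta \to 0$ gives $\liminf (ns)^{-1/3}\E L_{s,n} \ge \al$, completing the proof.

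The argument is essentially routine once the conditioning step is made; the only subtle point is that $s$ may depend on $n$, so one must verify that the hypothesis $ns \to \infty$ (rather than $s$ fixed) is enough for the binomial concentration and for $k_0(\eps)$ to be eventually dominated by $(1-\delta)ns$. Both are immediate, so I expect no real obstacle.
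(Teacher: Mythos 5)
Your argument is correct and follows essentially the same route as the paper: condition on the binomial count $K=|X_n\cap S|$ so that $\E L_{s,n}=\sum_k \PP(K=k)\,\E L_k$, get the lower bound from the Chernoff-type estimate (\ref{hoeffding}) (the paper uses the event $K> ns-\sqrt{ns\log ns}$ instead of your $K\ge(1-\delta)ns$, an immaterial difference), and get the upper bound by splitting at a threshold $k_0=N_0$ and applying Jensen's inequality to $x\mapsto x^{1/3}$. Your observation that the hypothesis $ns\to\infty$ (with $s$ possibly depending on $n$) is all that is needed matches the paper's treatment, so there is nothing to fix.
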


\begin{proof} Take any $\eps >0$ and choose $N_0$ (depending on $\eps$)
so large that for every $k \geq N_0$,
$(1-\eps) \alpha < \E L_k \, k^{-1/3} < (1+\eps) \alpha$. The random variable
$K=|X_n\cap S|$
has binomial distribution with mean $ns$. When $ns$ is large enough, $ns - \sqrt{ns
\log ns } \geq N_0$, and we use (\ref{hoeffding}) for a lower estimate:
\begin{align*}
\E L_{s,n} &= \sum_{k=0}^{n} \PP (K=k) \E L_k\\
&\geq \PP(K> ns - \sqrt{ns \log ns}) (1-\eps)\, \alpha \, (ns - \sqrt{ns \log ns})
^{1/3}\\
&\geq (1- (ns) ^{-1/2})(1-\eps) \, \alpha \, (ns - \sqrt{ns \log ns}) ^{1/3}\\
&\geq (1-2 \eps)\,  \alpha \, (ns)^{1/3}.
\end{align*}
For the upper bound, Jensen's inequality applied to
$\sqrt[3]{x}$ comes in handy:
\begin{align*}
\E L_{s,n} &= \sum_{k=0}^{n} \PP (K=k) \E L_k\\
&\leq N_0 \, \PP(K<N_0)
+ \sum_{k=N_0}^{n} \PP (K=k) \E L_k
\\
&\leq N_0 \, + \sum_{k=N_0}^{n} \PP (K=k) \, (1+\eps) \, \alpha \, \sqrt[3]{k}
\\
&\leq N_0 \, + \PP(K \geq N_0) \, (1+\eps) \, \alpha \,\left(
\sum_{k=N_0}^{n} \frac{\PP (K=k)}{\PP(K \geq N_0)} \ k \right)^{1/3}\\
&\leq N_0 \, + \PP(K \geq N_0)^{2/3} \, (1+\eps) \, \alpha \,(
 \E \, K)^{1/3}\\
&\leq N_0 + (1+\eps) \, \alpha \, (ns)^{1/3}\leq (1+2 \eps) \, \alpha \, (ns)^{1/3}.
\qedhere
\end{align*}
\end{proof}

Next, we derive the strong concentration property of $L_{s,n}$, the
analogue of Theorem~\ref{conc1}.

\begin{theorem}\label{concsub}
Suppose $\tau$ is a constant with $0 \leq \tau < 1$. Then for every $\gamma>0$ there
exists a constant
$N$, such that for every $n>N$ and every $s\geq
n^{-\tau}$,
$$\PP \big(|L_{s,n} -  \E L_{s,n}| > \gamma \sqrt{\log ns} \; (ns)^{1/6}\big) <
(ns)^{-\gamma^2/14}.  $$
\end{theorem}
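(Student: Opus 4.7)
The plan is to mimic the proof of Theorem~\ref{conc1} with the effective sample size $ns$ in the subtriangle $S$ playing the role of $n$. The random variable $L_{s,n}$ is $1$-Lipschitz on $T^n$ (altering a single point changes the length of any longest convex chain in $S$ by at most $1$) and is $b$-certifiable with $f(b)=b$ (a chain of length $\geq b$ is witnessed by its $b$ vertices). Writing $m=m_{s,n}$ and $\beta_0=c\sqrt{m/\log m}$ for a small $c>0$ to be chosen, Talagrand's inequality \eqref{tala} gives, exactly as in the derivation of \eqref{betasmall} and \eqref{betabig},
\[
\PP\bigl(|L_{s,n}-m|\geq\beta\sqrt{m\log m}\bigr) < 4 m^{-\beta^2/(4(1+c))} \quad\text{for } \beta\leq\beta_0,
\]
and an exponentially small bound for $\beta>\beta_0$.

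Next I would compare $\E L_{s,n}$ with $m$ by integrating the tail: $|\E L_{s,n}-m|\leq 4\sqrt{m\log m}\,(I_1+I_2)$, with $I_1$ bounded exactly as in \eqref{I1}. For $I_2$ one needs $n$ to be at most polynomial in $m$; this is where the hypothesis $s\geq n^{-\tau}$ with $\tau<1$ enters, giving $ns\geq n^{1-\tau}$, hence $n\leq(ns)^{1/(1-\tau)}$. Combined with Lemma~\ref{mediansub}, which yields $m\geq(3ns/\log(ns))^{1/3}$, this forces $n\leq C\,m^{3/(1-\tau)}(\log m)^{1/(1-\tau)}$, so the $I_2$ term $n\exp(-mc^2/4(1+c))$ is eventually negligible compared to $I_1$. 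One concludes $|\E L_{s,n}-m|\leq 5\sqrt{\pi(1+c)\,m}$ for all sufficiently large $ns$, and together with Lemma~\ref{meansub} this yields $(ns)^{-1/3}m_{s,n}\to\alpha$; in particular $m\leq 3.43\,(ns)^{1/3}$ for large $ns$.

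Finally, setting $y=\gamma\sqrt{\log(ns)}\,(ns)^{1/6}$, one writes
\[
\PP\bigl(|L_{s,n}-\E L_{s,n}|>y\bigr) \leq \PP\bigl(|L_{s,n}-m|>y-5\sqrt{\pi(1+c)\,m}\bigr).
\]
Using the upper bound $m\leq 3.43\,(ns)^{1/3}$ to lower-bound $(ns)^{1/6}$ in terms of $\sqrt{m}$, and $\log(ns)\geq 3\log m-\log 41$, the same computation displayed after \eqref{exp-med} shows that for all large $ns$,
\[
y-5\sqrt{\pi(1+c)\,m} \geq \gamma\sqrt{\tfrac{3}{3.44}}\,\sqrt{m\log m},
\]
and the right-hand coefficient is $<\beta_0$. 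Substituting $\beta=\gamma\sqrt{3/3.44}$ into \eqref{betasmall} and converting $m^{-\beta^2/(4(1+c))}$ to a power of $ns$ via $\log m\geq\tfrac{1}{3}(\log(ns)-\log\log(ns))$ yields, with $c=0.01$, the desired bound $(ns)^{-\gamma^2/14}$.

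The main obstacle is the uniformity in $s$: the bound on $I_2$ in the mean-versus-median comparison requires $n$ to be polynomial in $m\asymp(ns)^{1/3}$, which fails if $s$ is allowed to decay faster than any polynomial. The restriction $\tau<1$ is exactly what ensures $n\leq(ns)^{O(1)}$, making every step of the original proof of Theorem~\ref{conc1} go through after the formal substitution $n\mapsto ns$.
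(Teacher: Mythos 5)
Your proposal is correct and follows essentially the same route as the paper: Talagrand's inequality with $f(b)=b$ on $T^{\otimes n}$, the mean--median comparison via the tail integral with the hypothesis $s\geq n^{-\tau}$, $\tau<1$, used (together with Lemma~\ref{mediansub}) precisely to make $n$ polynomial in $m$ so that the $I_2$ term is negligible, then Lemma~\ref{meansub} for the upper bound $m\leq 3.43\,(ns)^{1/3}$ and the same final substitution into the small-$\beta$ Talagrand bound. The only cosmetic difference is the exact polynomial exponent in the bound $n\leq m^{O(1)}$ (the paper uses $m>n^{(1-\tau)/6}$ directly), which changes nothing.
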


\begin{proof} This proof is almost identical with that of Theorem
\ref{conc1}. Since $L_{s,n}$ is a random variable on $T ^ {\otimes n}$, we can
apply Talagrand's inequality with the certifying function $f(b)=b$ in the same
way as in the proof of Theorem~\ref{conc1}. Write again $m$ for $m_{s,n}$, the
median of $L_{s,n}$. Define $\be_0= c \sqrt{m / \log m}$ with $c=0.01$; then
the estimates (\ref{betasmall}) and (\ref{betabig}) remain valid with $L_{s,n}$
in place of $L_n$. Just as before,
\begin{align*}\label{submedianmean}
| \E L_{s,n} - m| &\leq   \E |L_{s,n} - m |=
\int_{0}^{\infty} \;\PP(|L_{s,n}-m |> x) dx \\
&= 4\sqrt{m \log m}(I_1 + I_2)
\end{align*}
where $I_1$ and $I_2$ are defined the same way as in (\ref{I1})
and (\ref{I2}). Moreover, $I_1$ satisfies the inequality
(\ref{I1}). With $I_2$ we have to be a bit more careful.

Note that $s \geq n^{-\tau}$ with $\tau <1$ guarantees that
Lemma~\ref{mediansub} is applicable for $n > \exp(25/(1-\tau))$. As $x/
\log x$ is monotone increasing for $x > e$,
\[
m \ge \sqrt[3]{\frac{3ns}{\log (ns)}} \ge
\sqrt[3]{\frac{3n^{1-\tau}}{(1-\tau) \log n}} >
\sqrt[3]{\frac{n^{1-\tau}}{ n^{(1-\tau)/2}}} = n^{(1-\tau)/6}
\]
for large enough $n$, and therefore by (\ref{I2})
\[
I_2 < m^{6/(1-\tau)} \exp \left(-m \, \frac{c^2}{4(1+c)}\right)
\]
where of course $6/(1-\tau) <  \infty$. Lemma~\ref{mediansub}
implies that $m = m_{s,n} \rightarrow \infty$, thus  the bound on $I_2$ is much
smaller than the one on $I_1$ for large enough $n$. Therefore, just as in
(\ref{exp-med}),
\begin{align*}
|\E L_{s,n}-m| &\leq 4\sqrt{m\log m}(I_1+I_2)\\
&<4\sqrt{\pi (1+c) m}+4\sqrt{m\log m}\,m^{6/(1-\tau)} \exp  \left(-m \,
\frac{c^2}{4(1+c)}\right) \\
&\le 5 \sqrt{\pi (1+c)} \sqrt m.
\end{align*}
Hence $\E L_{s,n}$ is of the same order of magnitude as $m=m_{s,n}$. Since $sn
\geq n^{1-\tau} \rightarrow \infty$, we can use Lemma~\ref{meansub}, yielding
that for large enough $n$,
\begin{equation}\label{submedfelso}
m_{s,n} \le 3.431 \sqrt[3]{ns}.
\end{equation}
Again for fixed $\gamma$ and for large enough $n$,
\begin{align*}
&\PP (|L_{s,n} -  \E L_{s,n}| > \gamma \sqrt{\log ns} \;
(ns)^{1/6})\\
&\leq \PP (|L_{s,n} -  m| > \gamma \sqrt{\log ns} \;
(ns)^{1/6}-|\E L_{s,n}-m|)\\
&\leq \PP (|L_{s,n} -  m| > \gamma \sqrt{\log ns} \;
(ns)^{1/6}-5 \sqrt{\pi (1+c)} \sqrt m),
\end{align*}
and, by (\ref{submedfelso}),
\begin{align*}
\gamma \sqrt{\log ns} \;
(ns)^{1/6}-5 \sqrt{\pi (1+c)} \sqrt m &\ge \g \sqrt {\frac {3 }{3.44}} \sqrt {m \log
m}.
\end{align*}
Since for large enough $n$, $\g \sqrt {3/3.44} < \beta_0=c
\sqrt{m / \log m}$, (\ref{betasmall}) applied to $L_{s,n}$ and
(\ref{submedfelso}) finally implies
\begin{align*}
&\PP \big(|L_{s,n} -  \E L_{s,n}| \geq \gamma \sqrt{\log ns} \;(ns)^{1/6}\big)\\
&\leq \PP \big(|L_{s,n} - m|\ge \g \sqrt {\frac{3} {3.44}}\sqrt{m \log m }\,\big)\\
&\leq 4 m^{- 3 \gamma^2  / 13.76 (1+c) }  \leq (ns)^{-\gamma^2 /
14}.\qedhere
\end{align*}
\end{proof}

We note that the proof also implies that for any $0<A<B<\infty$, there exists
an $N$ (depending on $A$ and $B$ only), such that the inequality of
Theorem~\ref{concsub} holds for any $\gamma \in [A,B]$ and for every $n>N$.

\section{The conditional approach}\label{condsec}

Our proof of Theorem~\ref{limsh} is based on the following idea. Assume that
$Y$ is a longest convex chain. Recall that $C(Y)$ is its convex polygonal path.
Suppose that $C(Y)$ contains a point $q$ that is far from $\G$, and let $\ell$
be a tangent line of $C(Y)$ at $q$. By Theorem~\ref{mobi+} we know that if
$T_1$ and $T_2$ denote the two triangles determined by $\ell$ and $q$, then
$\sqrt[3]{\A(T_1)}+\sqrt[3]{\A(T_2)}$ is substantially smaller than
$\sqrt[3]{1/2}$. Therefore, if $L^i$ denotes the length of the longest convex
chain in $T_i$, then the expectation of $L^1 + L^2$ is small as well, as it
follows from the strong concentration property of the binomial distribution and
Theorem~\ref{limit}. On the other hand, $C(Y) \subset T_1 \cup T_2$, and hence
$L^1 + L^2$ is at least as large as $L_n$, whose expected value is -- depending
on the choice of the neighbourhood of $\G$ -- much larger than $\E(L^1) +
\E(L^2)$. Therefore, either $L^1$, $L^2$, or $L_n$ is far from its expectation,
which, according to the strong concentration results of the previous sections,
has exponentially small probability.

The technical realisation of the above sketch is not trivial. One is tempted to
proceed in the following way. Define the random variable $Z$ as the indicator
function of the existence of a long convex chain:
\begin{displaymath}
Z = \Bigg\{ \begin{array}{ll}
1 & \textrm{ if $L_n \geq \E L_n - \gamma \sqrt{\log n} \; n^{1/6}$}\\
0 & \textrm{ otherwise.}
\end{array}
\end{displaymath}
Recall that $\cc(X_n)$ is the collection of the longest convex chains in a
given sample. The random variable $Q$ is defined as the (almost surely, unique)
farthest point of $\bigcup \{C(Y) : \ Y \in \cc(X_n)\} $ from $\G$ in Euclidean
distance.

Now, our aim is to show that if $q$ is ``far'' from $\G$, then the conditional
probability $\PP(Z=1\,|\, Q=q)$ -- or, what is the same, the conditional
expectation of $Z$ -- is exponentially small. At first glance, the area
estimate of Theorem~\ref{mobi+} plus the concentration results are sufficiently
strong to derive this statement. However, some thinking reveals that this is
not the case: since the condition on the farthest point modifies the underlying
distribution, the previous results obtained for the uniform sample cannot be
applied.

There are (at least) two ways to correct this reasoning. First, instead of
estimating the conditional expectation in question at every point, we can use a
finite approximation by estimating the (positive) probabilities that a longest
convex chain intersects a small convex set far from $\G$. This method was
accomplished in our article \cite{ab}; it involves several tricks for choosing
the right partitioning of $T$, and it reaches the goal via elementary, but
tedious technical calculations.

For the purpose of  the present thesis, a different method has been developed,
still along the line of conditional probabilities. Note that the independence
of the points of $X_n$ allows us to {\em condition on the location of a point}
of the sample: the remaining $n-1$ points have the same joint distribution as
$X_{n-1}$. Therefore we can estimate the probability of the existence of a long
convex chain through a fixed point, given that the sample contains this point.
This is the motivating  idea.

By default, every result from now on is understood to hold when $n$ is large
enough, where the bound on $n$ depends on $\g$ only. Also, with some ambiguity,
we shall say that a convex chain contains a point, if its polygonal path
contains it -- it will be clear from the context if we require the point to be
a vertex of the path.

Fix the constant $\g\ge 1$, and set
\begin{equation}\label{beq}
b=\g\, n^{1/6}\sqrt{\log n}.
\end{equation}
 We shall call a convex chain $Y
\subset X_n$ {\em long} if its length is at least $\E L_n - b$. The strong
concentration result of Theorem~\ref{conc1} directly shows that the probability
of the existence of long chains is large:
\begin{equation}\label{longexist}
\PP(L_n <\E L_n -b) \le n^{-\g^2/14}.
\end{equation}

\begin{figure}[h]
\epsfxsize =0.45\textwidth \centerline{\epsffile{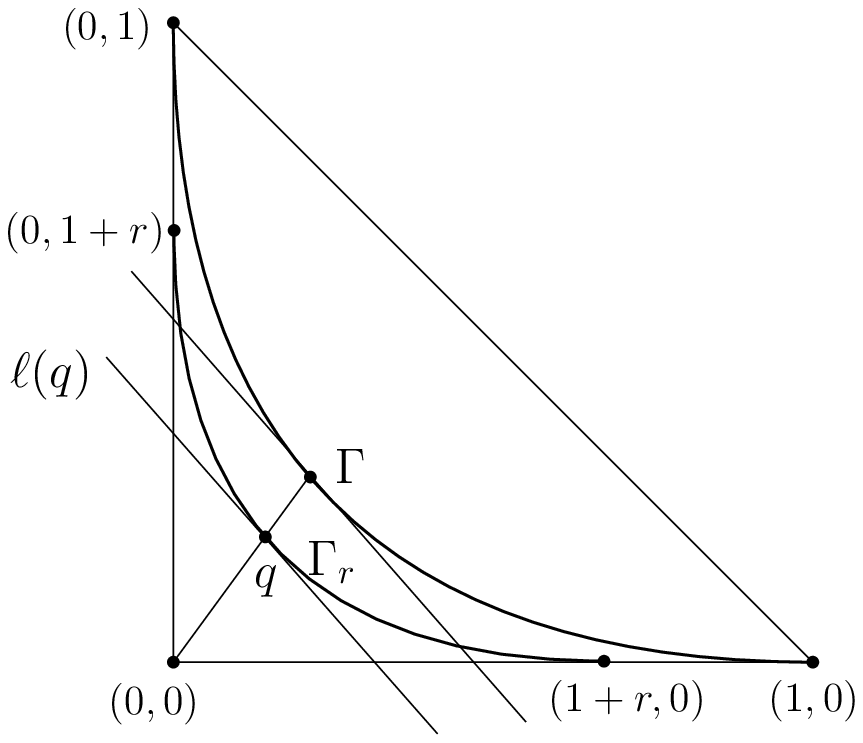}}
  \caption{The parabola $\G_r$}
  \label{parab2}
\end{figure}

\noindent In measuring distances from $\Gamma$ it will be convenient to use the
parabolic arcs
\[
\G_r = \{(x,y) \in T: \sqrt x + \sqrt y=\sqrt{1+r} \},
\]
where $r \in (-1,1)$. Then $\G_0 = \G$, and $\G_r$ is the homothetic copy of
$\G$ with ratio of homothety $1+r$, and center of homothety at the origin, see
Figure~\ref{parab2}. Assume the point $(p,q)$ is on $\G$. Then the point
$\big((1+r)\,p,(1+r)\,q\big)$ is on $\G_r$, and the tangent line to $\G_r$ at
this point is given by the equation
\begin{equation}\label{tanslope}
\frac x{\sqrt p}+\frac y{\sqrt q}=1+r.
\end{equation}
It follows that the distance between parallel tangent lines to $\G$ and $\G_r$
is
\begin{equation}\label{eq:dist}
\frac {|r|}{\sqrt{1/p+1/q}} \le \frac {|r|}{\sqrt 8}.
\end{equation}

Theorem~\ref{limsh} asserts that for a given $\gamma$, the longest convex
chains are within the $\eps$-neighbourhood of $\G$ with probability
$n^{-\gamma^2/14}$, where
 $\eps= 3/2 \gamma^{1/2} \, n^{-1/12} (\log n)^{1/4}$.  Define
\begin{equation}\label{ro}
\rho=\sqrt 8 \, \eps=4 \sqrt{2} \g^{1/2}n^{-1/12}(\log n)^{1/4}.
\end{equation}
Formula (\ref{eq:dist}) immediately implies that if a convex chain $C(Y)$ lies
between $\G_{-\rho}$ and $\G_\rho$, then $\dist(C(Y),\G)\le \eps$. Therefore,
in order to obtain Theorem~\ref{limsh}, we shall prove that all the longest
convex chains are between $\G_{-\rho}$ and $\G_\rho$ with large probability
(meaning that the polygonal paths of the chains are in the required region).

For any point $q \in T$, we define the line $\ell(q)$ as the tangent to $\G_r$
at $q$, where $r$ is the unique parameter such that $q \in \G_r$.

Let $Y$ be a convex chain. By continuity and compactness, the set of $r$'s such
that $C(Y) \cap \G_r \neq \emptyset $ is a closed sub-interval $[r_1, r_2]$ of
$[-1, 1]$. The {\em set of lower extremal points of} $C(Y)$ is defined by
\[
\mathcal{E}_l(Y) =  C(Y) \cap \G_{r_1}\;,
\]
and similarly, the {\em set of upper extremal points of }$C(Y)$ is given by
\[
\mathcal{E}^u(Y) =  C(Y) \cap \G_{r_2}\;.
\]
 We shall simply call elements of $\mathcal E_l(Y) \cup \mathcal E^u (Y) $
  extremal points of $Y$. Plainly, if $q$ is an extremal point, then $\ell(q)$ is a
tangent to $C(Y)$, and $C(Y) \subset T_1 \cup T_2$, where $T_1$ and $T_2$ are
the triangles determined by $q$ and $\ell(q)$, see Figure~\ref{parab2}~a).
There are two cases: if $q$ is a lower extremal point, then by convexity, $q$
must be a vertex of $C(Y)$, that is, $q \in Y$. On the other hand, if $q$ is an
upper extremal point, then there are two points $y_1$ and $y_2$ of $Y$ such
that $q \in y_1 y_2$, the segment between $y_1$ and $y_2$. We shall handle
these cases separately.

For a point $q \in T$ with $q \in \G_\varrho$, we naturally say that $q$ is
{\em below} or {\em above} $\G_r$ depending on whether $\varrho \leq r$ or
$\varrho > r$. For a given $q \in T$, let $\ell(q) \in E(X_n)$ denote the event
that there are two points $p_1, p_2 \in X_n \cap \ell(q)$, for which $q \in p_1
p_2$. We introduce the conditional probabilities $P(q)$ for $q \in T $ by
\begin{displaymath}
P(q) = \Bigg\{ \begin{array}{ll} \PP(\exists \textrm{ long convex chain $Y
\subset X_n$ with $q \in \mathcal{E}_l(Y)$ $|$ $q \in X_n$}) &
\textrm{for $q$ below $\G$;}\\
\PP(\exists \textrm{ long conv. chain $Y \subset X_n$, $q \in \mathcal{E}^u(Y)$
$|$ $\ell(q) \in E(X_n)$}) & \textrm{for $q$ above $\G$.}
\end{array}
\end{displaymath}

Let $S$ denote the subset of $T$ below $\G_{-\rho}$ and $U$ denote the part of
$T$ above $\G_\rho$, where $\rho$ is given by (\ref{ro}). The next theorem
provides a natural and essential link to the conditional probabilities $P(q)$,
and it serves as the key to Theorem~\ref{limsh}.
\begin{theorem}\label{probest}
With the above notations, {\em
\begin{equation}\label{probesteq}
\begin{aligned}
\PP(\exists \textrm{ long convex chain not} & \textrm{ entirely between
$\G_{-\rho}$ and $\G_\rho$})\\
& \leq n  \int_{S} P(q) d \mu (q) + 10 \, n^2  \int_{U} P(q) d \mu (q),
\end{aligned}
\end{equation}
} where $\mu$ stands for the normalised Lebesgue measure on $T$.
\end{theorem}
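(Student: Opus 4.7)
The plan is to decompose the bad event into a \emph{lower excursion} (some long chain $Y$ has $\mathcal E_l(Y)\cap S\ne\emptyset$) and an \emph{upper excursion} (some long $Y$ has $\mathcal E^u(Y)\cap U\ne\emptyset$), and bound each by a Markov/union bound over witness points in the sample. Exchangeability of $x_1,\dots,x_n$ together with the interpretation of $P(q)$ as a conditional probability will convert the expected count of witnesses into an integral of $P$. The factor $n$ on the lower side will come from a single-point union bound (because lower extremal points are vertices of $Y$, hence sample points), while the factor $n^2$ on the upper side will come from a union bound over ordered pairs $(y_1,y_2)$ (because a generic upper extremal point lies on an edge whose two endpoints are in $X_n$).

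\textbf{Lower case.} Strict convexity of $\G_{r_1}$ together with the piecewise linearity of $C(Y)$ force any $q\in\mathcal E_l(Y)$ to be a vertex of $Y$ almost surely, so $q\in X_n$. By Markov,
\[
\PP(\text{lower excursion})\le \E\bigl[\#\{i:x_i\in S \text{ and } x_i\in\mathcal E_l(Y)\text{ for some long }Y\}\bigr],
\]
and by exchangeability the right-hand side equals $n\cdot\PP(x_1\in S,\,x_1\in\mathcal E_l(Y)\text{ for some long }Y)$. Conditioning on the value of $x_1\in S$ leaves the remaining $n-1$ points i.i.d.\ uniform on $T$, so the conditional probability is precisely $P(x_1)$, giving $n\int_S P(q)\,d\mu(q)$.

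\textbf{Upper case.} An upper extremal $q\in\mathcal E^u(Y)\cap U$ has $\ell(q)$ supporting $C(Y)$ at $q$, and is either (i) in the relative interior of some edge $y_1y_2$ of $Y$ --- in which case $y_1,y_2\in X_n\cap\ell(q)$ flank $q$, so $\ell(q)\in E(X_n)$ --- or (ii) itself a vertex of $Y$ with $q\in X_n$. In case (i) I apply Markov to the number of ordered pairs $(x_i,x_j)$ whose line is tangent to some $\G_{r(q)}$ with $r(q)\ge\rho$ at a point $q$ lying between $x_i$ and $x_j$, and such that $q$ is upper-extremal for some long chain; by exchangeability this expectation is $n(n-1)$ times the corresponding joint probability for two uniform points in $T$. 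I then change variables from $(y_1,y_2)\in T\times T$ to $(q,s,t)$, where $q$ is the unique tangent point of the line through $(y_1,y_2)$ to some $\G_{r(q)}$, and $s,t$ are signed distances of $y_1,y_2$ from $q$ along $\ell(q)$. Using the tangent formula~\eqref{tanslope}, the Jacobian of this map is explicit and bounded above on the region $\{r(q)\ge\rho\}$; the inner integration against $ds\,dt$ collapses the conditional probability to $P(q)$ and produces a bound of the form $C n^2\int_U P(q)\,d\mu(q)$ for an absolute constant $C$.

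\textbf{Main obstacle.} The subtle point is case (ii) of the upper excursion: when $q$ is a vertex, $\ell(q)\cap X_n=\{q\}$ almost surely, so the conditioning $\ell(q)\in E(X_n)$ that defines $P(q)$ on $U$ does not directly apply. The plan is to absorb (ii) into (i) via the two neighbours $q_\pm\in X_n$ of $q$ in $Y$: the chord $q_-q_+$ is tangent to some $\G_{r'}$ with $r'<r(q)$, and because a long chain has $\Theta(n^{1/3})$ vertices consecutive ones are close, so that $r'$ still satisfies $r'\ge\rho$ (possibly after mildly contracting $\rho$ on the upper side); the vertex $q$ is then witnessed by the same edge-type pair $(q_-,q_+)$. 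Combining this reduction with the explicit Jacobian bound from case (i), and folding the resulting constants and slack into one, yields the numerical constant $10$ in front of $n^2\int_U P(q)\,d\mu(q)$.
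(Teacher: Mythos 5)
Your lower-excursion argument is essentially the paper's own, recast: the paper makes the conditioning on the location of one sample point rigorous by covering $S$ with $\delta$-squares and using $\PP(X_n\cap D\neq\emptyset)\leq n\mu(D)$, which in the $\delta\to0$ limit is exactly your exchangeability/first-moment computation, so that part is fine. For the upper excursion you genuinely diverge: the paper again covers $U$ with $\delta$-squares, replaces each square $D$ by the double cone $\ell(D)$, and proves the bow-tie estimate $A(D_l)A(D_r)\leq 5\delta^2$ by a two-case geometric argument (delicate near the coordinate axes); that estimate is the sole source of the constant $10$. Your substitute -- a first moment over ordered pairs of sample points plus a disintegration of $dy_1\,dy_2$ along the tangent-point parametrization -- is viable, and in fact cleaner than you may realize: writing $dy_1\,dy_2=|s-t|\,dh\,d\theta\,ds\,dt$ for the invariant line measure and using the parameters $(p,r)$ of (\ref{tanslope}), one gets $dq=(1+r)(1-\sqrt p)\,dp\,dr$ while $dh\,d\theta=\tfrac{1-\sqrt p}{2\,(p+(1-\sqrt p)^2)^{3/2}}\,dp\,dr$, so the density of the line measure with respect to $dq$ is at most $\sqrt 2$ with no degeneration near the ends of the arcs, and carrying the bookkeeping through does land below $10\,n^2$. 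But in your write-up this central quantitative step -- the exact analogue of the paper's bow-tie lemma -- is only asserted: no Jacobian is computed, no bound exhibited, and the constant is obtained by ``folding constants and slack''; as written the proposal establishes (\ref{probesteq}) only with an unspecified constant. You also need a sentence relating the conditional probability given the exact positions $(s,t)$ of the flanking points to $P(q)$ as defined through the event $\ell(q)\in E(X_n)$.

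The treatment of your case (ii) is the genuine flaw. The reduction hinges on ``consecutive vertices of a long chain are close'', which has not been established at this point, is not a deterministic fact (a chain of length $\E L_n-b$ can contain one long edge), and in any case forces you to shrink $\rho$, i.e.\ to prove a statement different from (\ref{probesteq}). Fortunately the case is almost surely vacuous, and that is what you should prove instead: if the maximum of $\sqrt x+\sqrt y$ along $C(Y)$ were attained at a vertex $v=(a,b)$, nonpositivity of the directional derivatives along the two incident edges forces both edges to have slope exactly $-\sqrt{b/a}$, hence to be collinear and tangent to the parabola through $v$ -- a configuration of three collinear points of $X_n\cup\{p_0,p_2\}$ with prescribed slope, which has probability zero. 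So upper extremal points lie a.s.\ in the relative interior of an edge, as the paper asserts. One caveat you share with the paper: that edge may be the first or last edge of $C(Y)$, whose endpoint $p_0$ or $p_2$ is not a sample point, so the two-point witness (and the conditioning event $\ell(q)\in E(X_n)$) is unavailable there; this boundary case requires a separate single-witness estimate in either approach.
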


\begin{proof} This is the moment to use a finite approximation. Let $\delta$ be a small positive
number, and cover $T$ with a disjoint union of squares with axis-parallel sides
of length~$\delta$. The set of the squares in the cover is called $\D$. Now,
the probability on the left hand side of (\ref{probesteq}) is certainly smaller
than
\begin{multline*}
\sum_{D \in \D: D \cap S  \neq \emptyset} \PP(\exists \textrm{ long convex
chain $Y \subset X_n$ with $D \cap  \mathcal{E}_l(Y) \neq \emptyset$ })\\
+\sum_{D \in \D: D \cap U  \neq \emptyset} \PP(\exists \textrm{ long convex
chain $Y \subset X_n$ with $D \cap  \mathcal{E}^u(Y) \neq \emptyset$ })
\end{multline*}

For squares $D \in \D$ intersecting  $S$, the lower extremal point in $D$ is an
element of $X_n$, and hence
\begin{align*}
&\PP(\exists \textrm{ long convex chain $Y \subset X_n$ with } D \cap
\mathcal{E}_l(Y) \neq \emptyset )\\
&=\PP(\exists \textrm{ long convex chain } Y \subset X_n, D \cap
\mathcal{E}_l(Y) \neq \emptyset \ \textrm{and} \ X_n \cap D \neq \emptyset)\\
&=\PP(\exists \textrm{ long convex chain } Y \subset X_n, D \cap
\mathcal{E}_l(Y) \neq \emptyset \ | \ X_n \cap D \neq \emptyset)\ \PP( X_n \cap
D \neq \emptyset).
\end{align*}
Here,
\begin{equation}\label{areaprob}
\PP( X_n \cap D \neq \emptyset) \leq 1 - (1- 2 \delta^2)^n \leq n \,2\,
\delta^2 = n \mu(D),
\end{equation}
and taking limits when $\delta \rightarrow 0$, we obtain the first term of the
right hand side of~(\ref{probesteq}).

Now, let $D \in \D$ intersect $U$. If $D$ contains an upper extremal point of a
long convex chain, then there exists $q \in D$ and $p_1, p_2 \in X_n$, such
that $p_1, p_2 \in l(q)$ and $q \in p_1 p_2$. Let
\[
\ell(D) = \bigcup \{\ell(q) \cap T: q \in D \}
\]
be the ``double cone'' centred at $D$, see Figure~\ref{square1}~a). Then
$\ell(D) \setminus D$ splits into two disjoint parts, that we call ``left'' and
``right'' parts. Let $D_l$ be the union of the left part with $D$, and
similarly, $D_r$ be the union of the right part with $D$. As above,
\begin{align*}
&\PP(\exists \textrm{ long convex chain $Y \subset X_n$ with $D \cap
\mathcal{E}^u(Y) \neq \emptyset$ })\\
&=\PP(\exists \textrm{ long conv. chain $Y \subset X_n$, $D \cap
\mathcal{E}^u(Y) \neq \emptyset$, and }D_l \cap X_n \neq \emptyset,
D_r \cap X_n \neq \emptyset)\\
&\begin{aligned} =\PP(\exists \textrm{ long conv. chain } Y \subset X_n, D \cap
\mathcal{E}^u(Y) \neq \emptyset \ | \ \exists q \in D: \ell(q) \in E(X_n) \\
 \cdot \, \PP(D_l \cap X_n \neq \emptyset, D_r \cap X_n \neq \emptyset). \qquad
\end{aligned}
\end{align*}
By taking limits when $\delta \rightarrow 0$, the correlation of the events
$D_l \cap X_n \neq \emptyset$ and $D_r \cap X_n \neq \emptyset$ tends to 0.
Therefore, the proof will be completed as above if we show that
\[
\PP(D_l \cap X_n \neq \emptyset)\; \PP(D_r \cap X_n \neq \emptyset) \leq 20 \,
n^2 \delta^2
\]
for every $D \in \D$ intersecting $U$. Similarly to (\ref{areaprob}),
\[
\PP(D_l \cap X_n \neq \emptyset)\; \PP(D_r \cap X_n \neq \emptyset) \leq 4 n^2
A(D_l) A(D_r),
\]
and hence the inequality to prove is
\begin{equation}\label{ad}
A(D_l) A(D_r) \leq 5 \, \delta^2\,.
\end{equation}

\begin{figure}[h]
\epsfxsize = \textwidth \centerline{\epsffile{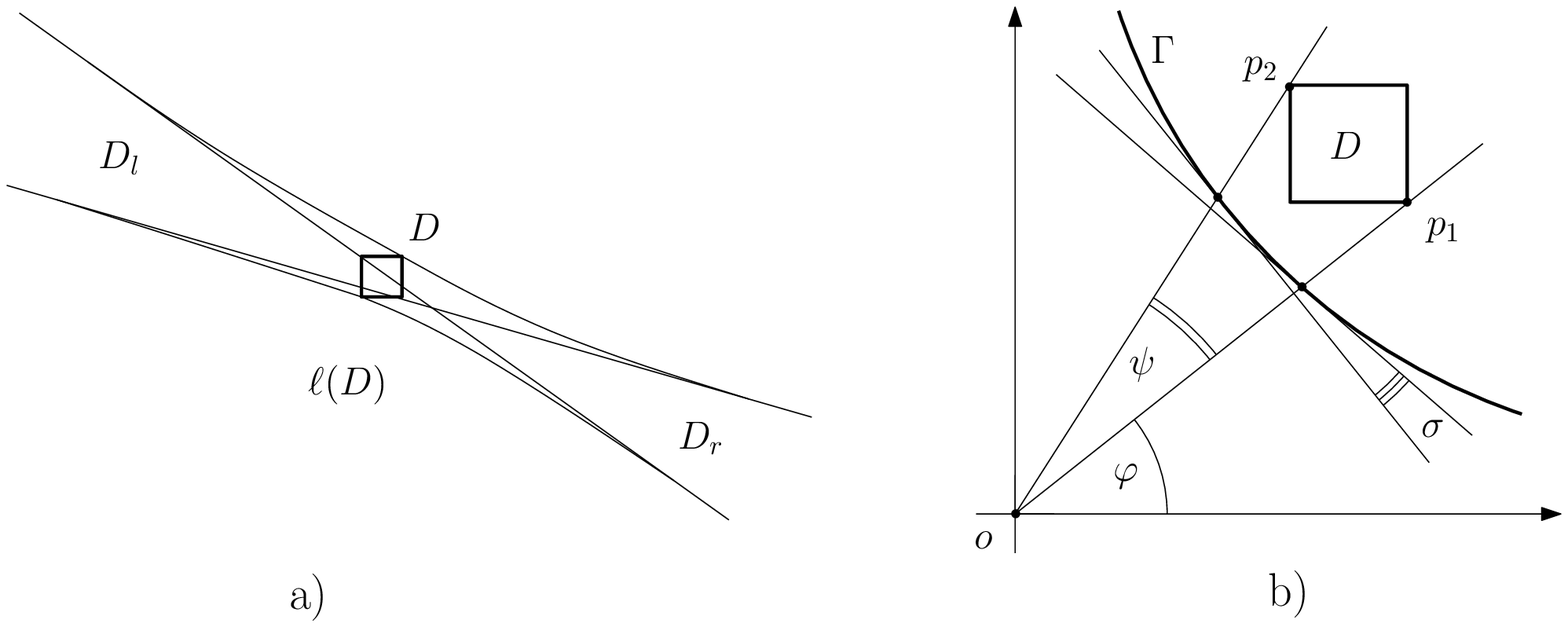}}
  \caption{Squares above $\G$}
  \label{square1}
\end{figure}

 Let the vertices of $D$ in counter-clockwise order be $(x,y)$,
$(x, y+\delta)$, $(x-\delta, y+\delta)$ and $(x-\delta, y)$. It is easy to see
that among the lines $\ell(q)$ with $q \in D$, the ones  with extremal slopes
are the ones belonging to $(x,y)$ and $(x-\delta, y+\delta)$. Let us call $p_1
= (x,y)$, the lower right vertex,  and $p_2=(x-\delta, y+\delta)$, the upper
left vertex of $D$, see Figure~\ref{square1}~b). Let $\varphi$ be the slope of
the ray $o \, p_1$, where $o$ is the origin. By symmetry, we may assume that
$\varphi \leq \pi/2$.

Denote  $\psi$ the angle between the rays $o \, p_1$ and $o \, p_2$.  If
$\delta$ is small enough, then both $p_1$ and $p_2$ are above $\G$, and hence
their distance from $o$ is at least $1/(2 \sqrt 2$). On the other hand, $|p_1
p_2| = \sqrt{2} \delta$. Therefore
\begin{equation}\label{psi}
\psi \leq \arcsin 4 \delta \approx 4 \delta.
\end{equation}

By formula (\ref{tanslope}), for any point $p$ on the ray $o\, p_1$, the lines
$\ell(p)$ and $\ell(p_1)$ are parallel, and their slope is
\[
\nu(\varphi) =- \arctan(\sqrt{\tan \varphi}).
\]
The slope of $\ell(p_2)$ is $\nu(\varphi + \psi)$.
 Now,
\[
\nu\,'(\varphi) = - \frac{1}{2(\sin \varphi + \cos \varphi)\sqrt{\sin \varphi
\cos \varphi}}\,.
\]
As it can easily be checked,  $\nu\,'(\varphi)$ is a concave function, and
$|\nu\,'(\varphi)|\leq 1/\sqrt{2 \sin 2 \varphi}$. Therefore the angle between
$\ell(p_1)$ and $\ell(p_2)$ is
\begin{equation}\label{sigma}
\sigma =  \nu(\varphi) - \nu(\varphi + \psi) \leq \frac{\psi}{\sqrt{2 \sin 2
\varphi}}\;.
\end{equation}

\begin{figure}[h]
\epsfxsize = 0.45 \textwidth \centerline{\epsffile{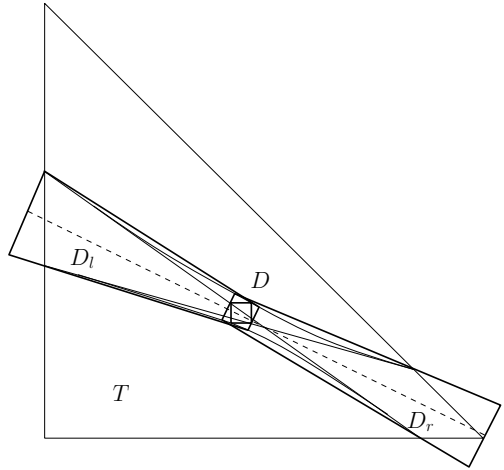}}
  \caption{Covering $\ell(D)$ with a bow-tie}
  \label{square2}
\end{figure}

We divide the rest of the argument into two parts depending on the size of
$\varphi$. If $\varphi > 0.17$, then $1/\sqrt{2 \sin 2 \varphi} < 1.23
<\sqrt{5} -1 $, and hence by (\ref{psi}),
\[
\sigma \leq 4 (\sqrt{5}-1)\,\delta.
\]
Therefore $D_l \cup D_r$ can be covered with a ``bow-tie'': the union of two
oppositely placed trapezoids with shorter base length at most $\sqrt{2}
\delta$, whose angle between the non-parallel opposite sides is less than $4
(\sqrt{5}-1) \, \delta$, and the sum of their altitudes is at most $\sqrt{2}$,
see Figure~\ref{square2}. A straightforward calculation reveals that under
these conditions, $A(D_l) A(D_r)$ is maximal when both the altitudes are
$1/\sqrt{2}$, and then
\[
A(D_l) A(D_r) < 5 \, \delta^2.
\]

Finally, when $\varphi \leq 0.17$, then $2 \varphi / \sin 2 \varphi \leq 1.02$,
and hence (\ref{psi}) and (\ref{sigma})  essentially imply that
\[\sigma \leq \frac{2 \delta }{ \sqrt
\varphi}\;.
\]
We shall justify the use of this estimate by leaving a sufficient gap at the
end. Recall that $x$ and $y$ are the coordinates of the vertex $p_1$. Since
$p_1$ is above $\G$, they satisfy the inequality $\sqrt{x} + \sqrt{y}>1$. On
the other hand, $\tan \varphi  = y/x$. These relations immediately yield that
\[
x  \geq \frac{\cos \varphi} {\sqrt{\cos \varphi}+ \sqrt{\sin \varphi}} \approx
\frac{1}{1 + \sqrt{\varphi}} \; .
\]
Moreover, the slopes of the tangent lines $\ell(p_1)$ and $\ell(p_2)$ are at
most $0.4$. Therefore, $D_r$ is covered by a trapezoid, whose shorter base is
of length at most $1.1 \delta $, the angle between its non-parallel opposite
edges is at most $2 \delta / \sqrt \varphi$, and its altitude is at most
\[
1.1 \left( 1- \frac{1}{1 + \sqrt{\varphi}} \right) \leq 1.1 \sqrt{\varphi}.
\]
Hence, $A(D_r) \leq 2.5\, \delta \sqrt{\varphi}$. On the other hand, $D_l$ is
covered by a trapezoid with shorter base-length $1.1 \delta $, opening angle at
most $2 \delta / \sqrt \varphi$ and altitude at most $1.1$, therefore
\[
A(D_l) \leq 1.21 \, \delta \left(1+ \frac{1}{\sqrt{\varphi}} \right).
\]
Altogether,
\[
A(D_r) A(D_l) \leq 3.025 (1 + \sqrt{\varphi})\, \delta^2 < 4.3 \, \delta^2,
\]
therefore (\ref{ad}) holds for sufficiently small $\delta$, and the proof is
complete.
\end{proof}

\section{Limit shape}\label{limshsec}

In this section we finally prove Theorem~\ref{limsh}.
 The first lemma  formalises the intuitive idea that was presented
at the beginning of Section~\ref{condsec}. Let $q \in T$ be an arbitrary point,
and as usual, let $T_1$ and $T_2$ denote the two triangles determined by
$\ell(q)$ and $q$. Let $X_n$ be a random sample of $n$ points from $T$ and let
$L^i$ denote the length of the longest convex chain in $T_i$, $i=1,2$.

\begin{lemma}\label{triexp} For sufficiently large $n$, if $|r|\ge n^{-1/12}$, then
\[
\E L^1+\E L^2 \le \E L_n -0.52 \,r^2\sqrt[3]{n}.
\]
\end{lemma}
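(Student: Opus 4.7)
The plan is to combine the area inequality of Theorem~\ref{mobi+} with the asymptotics of Lemma~\ref{meansub}, exploiting the arithmetic slack created by the improved lower bound $\alpha>1.577$ coming from~\eqref{also}, which gives $\alpha/3>0.525>0.52$.

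I will first translate the tangency data into the parameters $a,b$ of Theorem~\ref{mobi+}. Let $q=(P,Q)\in\G_r$, so that $\sqrt{P}+\sqrt{Q}=\sqrt{1+r}$. By~\eqref{tanslope}, the tangent line $\ell(q)$ to $\G_r$ at $q$ meets the side $p_0p_1$ at $(0,\sqrt{Q(1+r)})$ and the side $p_1p_2$ at $(\sqrt{P(1+r)},0)$. Reading off the ratios yields $a=1-\sqrt{Q(1+r)}$ and $b=\sqrt{P(1+r)}$, and a direct calculation gives
\[
a-b=1-\sqrt{1+r}\,(\sqrt{P}+\sqrt{Q})=1-(1+r)=-r,
\]
so $(a-b)^{2}=r^{2}$. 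Theorem~\ref{mobi+} therefore yields, with $s_i=A(T_i)/A(T)$,
\[
s_1^{1/3}+s_2^{1/3}\le 1-\tfrac13\,r^2.
\]

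Next, by affine invariance $L^i$ has the distribution of the $L_{s_i,n}$ of Section~\ref{geotools}, so Lemma~\ref{meansub} gives, for any fixed $\eps>0$ and all $n$ large enough, the uniform one-sided bound $\E L^i\le(1+\eps)\alpha(ns_i)^{1/3}+N_0(\eps)$, where the additive constant absorbs the case when $ns_i$ is bounded. Summing over $i=1,2$ and substituting the geometric inequality produces
\[
\E L^1+\E L^2\le(1+\eps)\alpha\,n^{1/3}\bigl(1-\tfrac13 r^2\bigr)+2N_0(\eps),
\]
while Theorem~\ref{limit} supplies $\E L_n\ge(1-\eps)\alpha n^{1/3}$. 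Subtracting yields
\[
\E L_n-\E L^1-\E L^2\ge\alpha n^{1/3}\bigl[\tfrac13 r^2-2\eps\bigr]-2N_0(\eps).
\]
Provided $\eps$ is chosen with $2\alpha\eps\le(\alpha/3-0.52)\,r^2$, the first term on the right exceeds $0.52\,r^2 n^{1/3}$; the hypothesis $|r|\ge n^{-1/12}$ guarantees $r^2 n^{1/3}\ge n^{1/6}\to\infty$, so the residual $2N_0(\eps)$ can be absorbed for $n$ large.

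The delicate point is the quantitative coupling of $\eps$ to the smallest permitted value $r^2\ge n^{-1/6}$: since Theorem~\ref{limit} and Lemma~\ref{meansub} only assert qualitative convergence, $\eps$ cannot be allowed to depend freely on $n$. The clean way to handle this is to partition the range $n^{-1/12}\le|r|\le 1$ into $O(\log n)$ dyadic intervals, fix $\eps$ on each interval strictly below $(\alpha/3-0.52)\,r^2/(2\alpha)$, and then take $n$ large enough (depending on the finitely many thresholds $N_0(\eps)$ so produced) that $N_0(\eps)$ becomes negligible compared with $r^2 n^{1/3}$ throughout that interval.
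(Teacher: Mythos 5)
Your reduction to Theorem~\ref{mobi+} is fine: the computation showing $|a-b|=|r|$, and hence $\sqrt[3]{t_1/2}+\sqrt[3]{t_2/2}\le\sqrt[3]{1/2}\,\bigl(1-\tfrac{1}{3}r^2\bigr)$, agrees with the paper (which invokes Corollary~\ref{erinto} for the same fact). The gap is in the second half. You bound $\E L^1+\E L^2$ above by $(1+\eps)\alpha n^{1/3}\bigl(1-\tfrac{1}{3}r^2\bigr)+2N_0(\eps)$ and $\E L_n$ below by $(1-\eps)\alpha n^{1/3}$, so the difference you control is $\alpha n^{1/3}\bigl[\tfrac{1}{3}r^2-2\eps\bigr]-2N_0(\eps)$, and you must beat $0.52\,r^2n^{1/3}$ using only the slack $(\alpha/3-0.52)\,r^2n^{1/3}$. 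Since $r^2$ may be as small as $n^{-1/6}$, that slack is only of order $n^{1/6}$, while the error $2\alpha\eps\,n^{1/3}$ is of order $n^{1/3}$ for any fixed $\eps>0$; so no fixed $\eps$ works for all admissible $r$ once $n$ is large, and you are forced to take $\eps=\eps(n)\lesssim n^{-1/6}$. But Theorem~\ref{limit} and Lemma~\ref{meansub} are purely qualitative limit statements with no rate of convergence, so there is no control of the threshold beyond which the $\eps(n)$-accurate bounds hold. The dyadic-partition remedy does not repair this: the family of blocks covering $[n^{-1/12},1]$ grows with $n$, the lowest block always demands $\eps\lesssim n^{-1/6}$, and ``taking $n$ large depending on the finitely many thresholds'' is circular because the set of thresholds itself changes (and is unbounded) as $n\to\infty$. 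In effect your argument needs a quantitative version of Theorem~\ref{limit}, which is not available.

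The paper's proof avoids additive errors of size $\eps n^{1/3}$ altogether. It places three triangles $S_1,S_2,S_3$ along the special parabola $\G$ with $\A(S_1)=t_1/2$ and $\A(S_2)=t_2/2$, so that by affine invariance $\E L^i=\E\La^i$ \emph{exactly} for $i=1,2$ (where $\La^i$ is the longest chain in $S_i$), while Corollary~\ref{areasum} and the area deficit give $\sqrt[3]{\A(S_3)}\ge\sqrt[3]{1/2}\,\tfrac{1}{3}r^2$. Concatenating chains yields $\La^1+\La^2+\La^3\le L_n$, hence the exact inequality $\E L^1+\E L^2\le\E L_n-\E\La^3$, with no $\eps n^{1/3}$-sized error terms. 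All that remains is a lower bound for $\E\La^3$; here the hypothesis $|r|\ge n^{-1/12}$ guarantees that the expected number of sample points in $S_3$ is at least of order $n^{1/2}\to\infty$, so the fixed-constant bound (\ref{also}) (namely $\E L_N\ge 1.57\,N^{1/3}$ once $N$ exceeds a fixed threshold), combined with (\ref{hoeffding}), gives $\E\La^3\ge 0.52\,r^2\sqrt[3]{n}$. Only a one-sided bound with a fixed constant at a diverging sample size is used, so no rate of convergence is ever needed; that is the ingredient your subtraction scheme is missing.
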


\begin{proof} Let $t_i = \mu (T_i)$ for $i=1,2$. We want to apply
Theorem~\ref{mobi+}. It is not hard to see (using Corollary~\ref{erinto} for
instance) that what is denoted by $|a-b|$ there, is equal to $|r|$ here.
Consequently
\begin{equation}\label{terhiany}
 \sqrt[3]{t_1 /2} + \sqrt[3]{t_2 /2} \leq  \sqrt[3]{1/2} -\sqrt[3]{1/2} \; \frac 1 3
\, r^2.
\end{equation}

Write $L^i$ for the longest convex chain in the triangle $T_i$. By affine
invariance, $L^i$ has the same distribution as $L_{t_i,n}$ (from
Section~\ref{concentr}) for $i=1,2$. We need to estimate $\E L_n- (\E L^1 + \E
L^2)$ from below.

\begin{figure}[h]
\epsfxsize =0.4 \textwidth \centerline{\epsffile{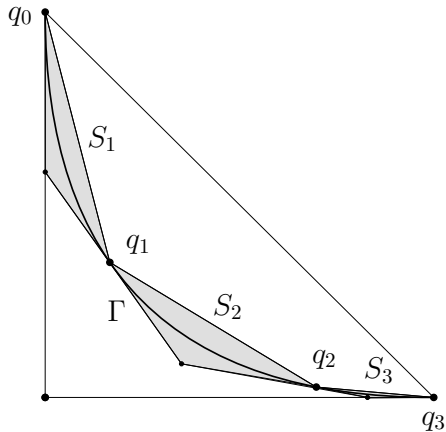}}
  \caption{Estimating the expectations}
  \label{extratri}
\end{figure}

For four points $q_0=(0,1)$, $q_1$, $q_2$ and $q_3=(1,0)$ in this order on
$\G$, denote $S_i$ the triangle delimited by the tangents to $\G$ at
$q_{i-1},q_i$, and by the segment $q_{i-1} q_i$, $i=1,2,3$; see
Figure~\ref{extratri}. Choose $q_1$ and $q_2$ so that $\A(S_1)=t_1/2$ and
$\A(S_2)=t_2/2$. Then Corollary~\ref{areasum} and (\ref{terhiany}) imply that
\begin{equation*}
\sqrt[3]{ \A (S_3)} \geq \sqrt[3]{1/2}\; \frac 1 3 \, r^2.
\end{equation*}
Let now $\La^i$ denote the length of a longest chain in $S_i$ for $i=1,2,3$.
For $i=1$ and $2$, $\La^i$ has the same distribution as $L_{t_i,n}$ (and as
$L^i$). Therefore $\E L^i = \E L_{t_i,n}= \E \La^i$ for $i=1,2$. Further,
$\La^1+\La^2+\La^3 \le L_n$ follows from concatenating the longest convex
chains in the triangles $S_i$. Thus we have
\begin{equation}\label{osszegkicsi}
\E L^1 + \E L^2 +\E \La^3 = \sum_{i=1}^{3}\E \La^i \leq  \E L_n.
\end{equation}
The random variable $|X_n\cap S_3|$ has binomial distribution with mean
$2A(S_3)n$ which is at least $\kappa = (1/3)^3 r^6 n \ge (1/3)^3n^{1/2}$. Set
$N=\kappa - \sqrt {\kappa \log \kappa}$. Thus we obtain that for all large
enough $n$,
$$N > 0.99 \, \kappa = \frac{0.99}{27} r^6 n,$$
and $N$ tends to infinity with $n$. Using the estimates (\ref{hoeffding}) and
(\ref{also}), again for large $n$ we have
\begin{align*}
 \E \La^3  &\geq \PP (|X_n\cap S_3| \geq N) \, \E L_N \geq (1-\kappa^{-1/2}) \ 1.57
\, N^{1/3}\\
&\geq 1.569 N^{1/3} \geq 0.52\, r^2 \sqrt[3]n.
\end{align*}
Hence, by (\ref{osszegkicsi})
\begin{equation*}
\E L^1 + \E L^2 \leq \E L_n- 0.52 \,r^2\sqrt[3]n. \qedhere
\end{equation*}
\end{proof}

Next, we estimate the conditional probabilities $P(q)$ from
Section~\ref{condsec} for points $q$ that are far from~$\G$.

\begin{lemma}\label{cond}
For any fixed $\gamma>0$, there exists an $N$, such that for every $n >N$,
\[
P(q) \leq n^{-31 \gamma^2 /14}
\]
for every $q$, that is below $\G_{-\rho}$ or above $\G_\rho$.
\end{lemma}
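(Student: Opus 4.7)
The plan is to combine the area-deficit estimate of Lemma~\ref{triexp} with the strong concentration provided by Theorem~\ref{concsub}. Fix $q\in\G_r$ with $|r|\ge \rho = 4\sqrt2\,\gamma^{1/2}n^{-1/12}(\log n)^{1/4}$, and let $T_1,T_2$ be the two outer triangles cut off from $T$ by the chord $\ell(q)\cap T$ as in Theorem~\ref{mobi+}, set $t_i=\mu(T_i)$, and write $L^i$ for the length of the longest convex chain in $T_i$ joining its two ``chain vertices'' ($p_0$ and $q$ for $i=1$; $q$ and $p_2$ for $i=2$). Under either of the two conditionings entering the definition of $P(q)$, one or two sample points are pinned on $\ell(q)$ while the remaining $n-1$ or $n-2$ points stay i.i.d.\ uniform on $T$; the same shrinking-square limiting argument used in the proof of Theorem~\ref{probest} makes this precise and shows that $L^i$ is, up to an additive $O(1)$ caused by the fixed conditioning vertex lying on $\partial T_i$, distributed as $L_{t_i,n'}$ with $n'\in\{n-1,n-2\}$.

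Suppose under this conditioning that a long convex chain $Y$ with $q\in\mathcal{E}_l(Y)\cup\mathcal{E}^u(Y)$ exists. In the lower-extremal case $q$ is a vertex of $C(Y)$ and $Y\setminus\{q\}$ splits into convex sub-chains in $T_1,T_2$ of total length $|Y|-1$. In the upper-extremal case $q$ lies on an edge $y_1 y_2\subset \ell(q)$ of $C(Y)$; prepending (resp.\ appending) the collinear segment $y_1 q$ (resp.\ $q y_2$) to the two halves of $Y$ produces convex chains in $T_1,T_2$ of total length at least $|Y|$. In either case
\[
L^1+L^2\;\ge\; |Y|-1\;\ge\; \E L_n-b-1,\qquad b=\gamma n^{1/6}\sqrt{\log n}.
\]
Since $|r|\ge\rho\ge n^{-1/12}$ for $n$ large, Lemma~\ref{triexp} applies and gives $\E L^1+\E L^2\le \E L_n-0.52\,r^2 n^{1/3}$. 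Subtracting the two and using $\rho^2=32\gamma n^{-1/6}\sqrt{\log n}$ yields, for all sufficiently large $n$,
\[
(L^1-\E L^1)+(L^2-\E L^2)\;\ge\; 0.52\,\rho^2 n^{1/3}-b-O(1)\;\ge\; 15\,\gamma n^{1/6}\sqrt{\log n},
\]
so at least one of the two summands exceeds $c:=7.5\,\gamma n^{1/6}\sqrt{\log n}$.

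I would then apply Theorem~\ref{concsub} to each $L^i$: writing $c=\delta_i\sqrt{\log(nt_i)}(nt_i)^{1/6}$ and using $t_i\le 1$ gives $\delta_i^2\log(nt_i)=c^2/(nt_i)^{1/3}\ge (7.5)^2\gamma^2(\log n)/t_i^{1/3}\ge (7.5)^2\gamma^2\log n$, so
\[
\PP\bigl(L^i-\E L^i\ge c\bigr)\;\le\; (nt_i)^{-\delta_i^2/14}\;\le\; n^{-(7.5)^2\gamma^2/14}\;<\; n^{-4\gamma^2}.
\]
A union bound over $i\in\{1,2\}$ and over the lower/upper-extremal cases for $q$ keeps the total comfortably below the required $n^{-31\gamma^2/14}=n^{-2.22\gamma^2}$ for every large enough $n$.

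The only genuinely delicate regime, and thus the principal obstacle, is the one in which $t_i$ drops below $n^{-\tau}$ for every fixed $\tau<1$ and Theorem~\ref{concsub} is no longer applicable; this arises when $q$ lies near a vertex of $T$ and one outer triangle collapses. There, however, $L^i\le |X_n\cap T_i|$ is binomial with mean $nt_i\ll c$, so the Chernoff upper tail \eqref{hoeffdingfelso} shows that $\PP(|X_n\cap T_i|\ge c)$ decays super-polynomially in $n$, much faster than $n^{-31\gamma^2/14}$. Peeling off this degenerate case and treating it by the binomial estimate, while handling the main regime by the concentration argument above, completes the plan.
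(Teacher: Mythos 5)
Your proposal follows essentially the same route as the paper's proof: reduce the conditional probability $P(q)$ to the unconditional event $L^1+L^2\ge \E L_n-b$ (absorbing the one or two pinned points into an additive $O(1)$), invoke Lemma~\ref{triexp} to get the expectation deficit $0.52\,\rho^2 n^{1/3}\gg b$, and split according to whether $t_i$ lies above or below a threshold $n^{-\tau}$, using Theorem~\ref{concsub} in the first regime and the Chernoff bound (\ref{hoeffdingfelso}) in the degenerate one. The only step to tighten is your application of Theorem~\ref{concsub} with the parameter $\delta_i$, which grows like $t_i^{-1/6}$ and is therefore unbounded near the threshold, while the theorem and the remark following it only cover parameters in a fixed compact range; the paper sidesteps this by writing the deviation as $\gamma'\sqrt{\log(nt_i)}\,(nt_i)^{1/6}$ with $\gamma'=6\gamma\sqrt{\log n/\log(nt_i)}\in[6\gamma,6^{3/2}\gamma]$, and the same one-line monotonicity reparametrisation (capping your $\delta_i$ at $7.5\gamma\sqrt{\log n/\log(nt_i)}$, which is bounded precisely because $t_i\ge n^{-\tau}$) reproduces your exponent $(7.5)^2\gamma^2/14>31\gamma^2/14$ unchanged.
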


\begin{proof}
Assume that $Y$ is a long convex chain that contains $q$. If $q$ is below
$\G_{-\rho}$, then $X_n \setminus q$ is distributed as $X_{n-1}$. Therefore if
$\tilde{L}^1$ and $\tilde{L}^2$ denote the length of the longest convex chains
in $T_1$ and $T_2$, then $\tilde{L}^i$ is distributed as $L_{t_i, n-1}$ for
$i=1,2$, where $t_i = \mu (T_i)$. Moreover, $|Y \cap T_i| \leq \tilde{L}^i$.
Recall that, since $Y$ is a long convex chain, its length is at least $\E L_n -
b$, where $b$ is given by~(\ref{beq}). Thus,
\[
\E L_n - b \leq |Y| \leq |Y \cap T_1| + |Y \cap T_2| \leq \tilde{L}^1 +
\tilde{L}^2 + 1.
\]
Therefore,
\[
P(q) \leq \PP(\tilde{L}^1 + \tilde{L}^2 + 1 \geq \E L_n - b).
\]
Since $n,b \rightarrow \infty$, the term ``$+1$'' makes no difference at the
estimates. Furthermore, $\PP(\tilde{L}^1 + \tilde{L}^2  \geq \E L_n - b) \leq
\PP(L^1 + L^2 \geq \E L_n - b)$, and hence
\begin{equation}\label{plest}
P(q) \leq \PP(L^1 + L^2  \geq \E L_n - b).
\end{equation}
When $q$ is above $\G_\rho$, then there are two points $y_1, y_2 \in X_n$ on
$l(q)$ such that $q \in y_1 y_2$, and $Y$ is contained in the triangles
$\tilde{T}_1$ and $\tilde{T}_2$ determined by $\ell(q)$, $y_1$ and $y_2$. Now,
$X_n \setminus \{y_1, y_2 \}$ is distributed as $X_{n-2}$, and $\tilde{T}_1
\subset T_1$, $\tilde{T}_2 \subset T_2$. Therefore a similar reasoning as above
results in (\ref{plest}).

Lemma~\ref{triexp}, (\ref{beq}) and (\ref{ro}) gives that
\[
\E L^1 + \E L^2 \leq \E L_n- 0.52 \, \rho^2 n^{1/3} < \E L_n - 13 b,
\]
therefore
\begin{equation}\label{psum}
\begin{aligned}
P(q)&\leq \PP(L^1 + L^2  \geq \E L_n - b)\\
&\leq \PP(L^1 + L^2  \geq \E L^1 + \E L^2 + 12 b)\\
&\leq \sum_{i=1,2} \PP(L^i\ge \E L^i+6b).
\end{aligned}
\end{equation}

 Next, we estimate $\PP(L^i\ge \E L^i+6b)$. When $t_i = 2 A(T_i) \ge
n^{-5/6}$, we use Theorem~\ref{concsub} with $\tau=5/6$:
\begin{align*}
& \PP(L^i\ge \E L^i+6b) = \PP(L^i \geq \E  L^i +
6 \gamma \sqrt{\log n} \; n^{1/6} )\\
&\leq \PP(L^i \geq \E L^i + 6\gamma \sqrt{\log n / \log (n t_i)} \, \sqrt{\log
(n t_i)} \; (n t_i)^{1/6} )
\\
&\leq (n t_i)^{-\gamma^2 36\log n / 14 \log (n t_i)}= n^{-36\gamma^2/14}.
\end{align*}
The last inequality holds according to the remark following
Theorem~\ref{concsub}, since
\[
1 \leq 6 \gamma \sqrt{\log n / \log (n t_i)} \leq \gamma \, 6^{3/2}.
\]
Finally, when $t_i < n^{-5/6}$, the expected number of points in $T_i$ is $t_i
n < n^{1/6}$. So for the random variable $|T_i\cap X_n|$ inequality
(\ref{hoeffdingfelso}) implies that
\begin{align*}
\PP \big(\, |T_i\cap X_n| \geq  6 \gamma \sqrt{\log n} \; n^{1/6} \big)&\leq
\left(\frac{e \, t_i n}{6 \gamma \sqrt{\log n} \; n^{1/6}}   \right)^{6 \gamma
\sqrt{\log n} \; n^{1/6}}\\
\leq \left(\frac{e }{6 \gamma \sqrt{\log n}}   \right)^{n^{1/6}}&<
n^{-32\gamma^2/14}
\end{align*}
for large enough $n$, as it can easily be seen by taking logarithms.

Therefore, in both cases
\[
\PP \big(L^i \geq \E L^i + 6 \,b \big) < n^{-32\gamma^2/14},
\]
and by (\ref{psum}),
\[
P(q) < 2 n^{-32\gamma^2/14} < n^{-31\gamma^2/14}. \qedhere
\]
\end{proof}

\begin{proof}[Proof  of Theorem~\ref{limsh}.]
We have to estimate the probability that there is a {\em longest} convex chain
not lying between $\G_{-\rho}$ and  $\G_{\rho}$. This event splits into two
parts: either the longest convex chain has less than $\E L_n - b$ points,  or
there is a long convex chain not entirely between $\G_{-\rho}$ and $\G_{\rho}$.
By Theorem~\ref{conc1} and the remark following it,
\[
\PP(L_n < \E L_n - b)< n^{-\g^2(1/14 + \vartheta)}
\]
 for some positive $\vartheta >0$. On the other
hand, Theorem~\ref{probest}, Lemma~\ref{cond} and the condition $\gamma \geq 1$
imply that
\begin{align*}
&\PP(\exists \textrm{ long convex chain not entirely} \textrm{ between
$\G_{-\rho}$ and
$\G_\rho$})\\
 &\leq 10 \, n^2 \, n^{-31\gamma^2/14}
\leq n^{-2\gamma^2/14}.
\end{align*}
Therefore the probability in question is at most
\begin{equation*}
n^{-\g^2/14} n^{-\g^2 \vartheta}+  n^{-2 \g^2/14} = n^{-\g^2/14}\left(n^{-\g^2
\vartheta} + n^{-\g^2/14}\right)<n^{-\g^2/14}\;. \qedhere
\end{equation*}
\end{proof}

\section{Numerical experiments}\label{experi}

In the final section we summarize the observations obtained by
computer simulations.

The search for the longest convex chains can be accomplished by an algorithm
which has running time $O(n^2)$. This algorithm works as follows. We order the
points by increasing $x$ coordinate, and then recursively create a list at each
point. The $k$th element on the list at point $p$ contains the minimal slope of
the last segment of chains starting at $p_0$ and ending at $p$ whose length is
exactly $k$, and a pointer to the other endpoint of this last segment. For
creating the list at the next point $q$, we have to search the points before
$q$, and check if $q$ can be added to their minimal slope chains while
preserving convexity.

This algorithm can be speeded up with some (not fully justified, but useful)
tricks. First of all, Theorem~\ref{limsh} guarantees that we have to search
only among the points close to $\G$. The simulations show that most longest
convex chains are located in a small neighbourhood of $\G$, whose radius is in
fact of order approximately $n^{-1/3}$, much smaller than the width of order
$n^{-1/12}$ given by Theorem~\ref{limsh}. Therefore the search can be
restricted to a subset of the points with cardinality of order $n^{2/3}$.
Second, when looking for the longest chain, we have to search only points
relatively close to $p$, and chains which are already relatively long, thus
reducing memory demands.

\begin{figure}[h]
\epsfxsize =0.4\textwidth \centerline{\epsffile{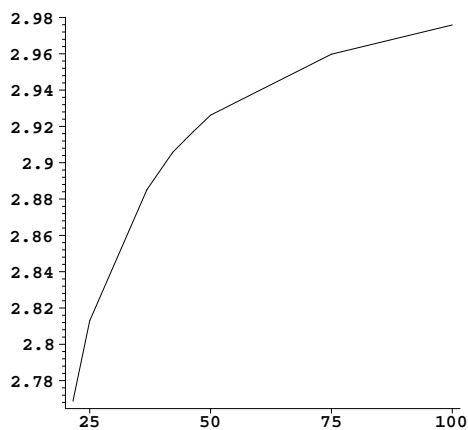}}
  \caption{Results for $n^{-1/3} \E L_n$, illustrated as a function of $n^{1/3}$. }
  \label{varh}
\end{figure}

With the above method, the search can be executed for up to $5\cdot 10^4$
active points, in which case examining one sample takes about 2 minutes. As the
experiments show, this provides a good approximation for $n$'s up to order
$10^6$. In each experiment, we increased the width of the searched
neighbourhood until the increment did not generate a significant change in the
average length of the longest convex chain. The results obtained by this
method, although giving only a lower bound for $\E L_n$, are heuristically
close to it.

Our largest search was done for $n= 10^6$. The number of samples was $250$
except for the cases $n=25^3$ and $n=10^6$, where we used $500$ samples in
order to model the distribution of $L_n$ (see Figure~\ref{eloszl}).

\begin{table}[h] \label{tabla}
\begin{center}
\newcommand\TT{\rule{0pt}{2.6ex}}
\newcommand\BB{\rule[-1.2ex]{0pt}{0pt}}
\begin{tabular}{c| c | c | c | c }
$n$ & $n^{-1/3} \E L_n$ \BB & $d_n$ &Distance$ /\sqrt{2}$ & Deviation
\\
\hline
1000 \TT & 2.532 & 4 & 0.270 & 1.254\\
10000& 2.768 & 5 & 0.200 & 1.383\\
15625& 2.813 & 5 & 0.150 & 1.293\\
50000& 2.885 & 5 & 0.100 & 1.411\\
75000& 2.906 & 5 & 0.070 & 1.580\\
100000& 2.917& 5 & 0.060 & 1.431\\
125000& 2.926 & 5 & 0.050 & 1.637\\
421875& 2.959 & 5 & 0.012 & 1.732\\
1000000 \BB & 2.976 & 6 & 0.012 & 2.023\\
\end{tabular}
\\[10pt]
\caption{Results obtained by the simulation}
\end{center}
\end{table}

The obtained numerical results well illustrate what the proof of
Theorem~\ref{limit} suggests, namely, that $ n^{-1/3} \E L_n$ is increasing
with $n$. Also, the data seem to confirm that $\alpha=3$.

On Table~\ref{tabla} we list the results obtained by the program. The first
column is the number of points chosen in $T$, the second is the average of
$n^{-1/3}L_n$. The third column contains the half-length of the interval of the
values of $L_n$, that is, $d_n= \lfloor \max |L_n - \E L_n|\rfloor $. This is
noticeably small even for $n=10^6$. In the fourth column we list $1/\sqrt{2}$
times the radius of the neighbourhood of parabola we used for the search (the
term $\sqrt{2}$ comes from a transformation of coordinates). The last data are
the standard deviation of the set of values of $L_n$, ie. the square-root of
its variance.

Figure~\ref{varh} illustrates the linear interpolation of $n^{-1/3} \E L_n $ as
a function of $n^{1/3}$. It is based on the data shown on Table~\ref{tabla}.

As we know from Theorem~\ref{conc1}, $L_n$ is highly concentrated about its
expectation. This phenomenon is well recognizable on Figure~\ref{eloszl}, where
we plot the distribution in the cases $n=25^3(=15625)$ and $n=10^6$ with $500$
samples.
\bigskip

 \begin{figure}[h]
\centering
\includegraphics[width=0.4 \textwidth]{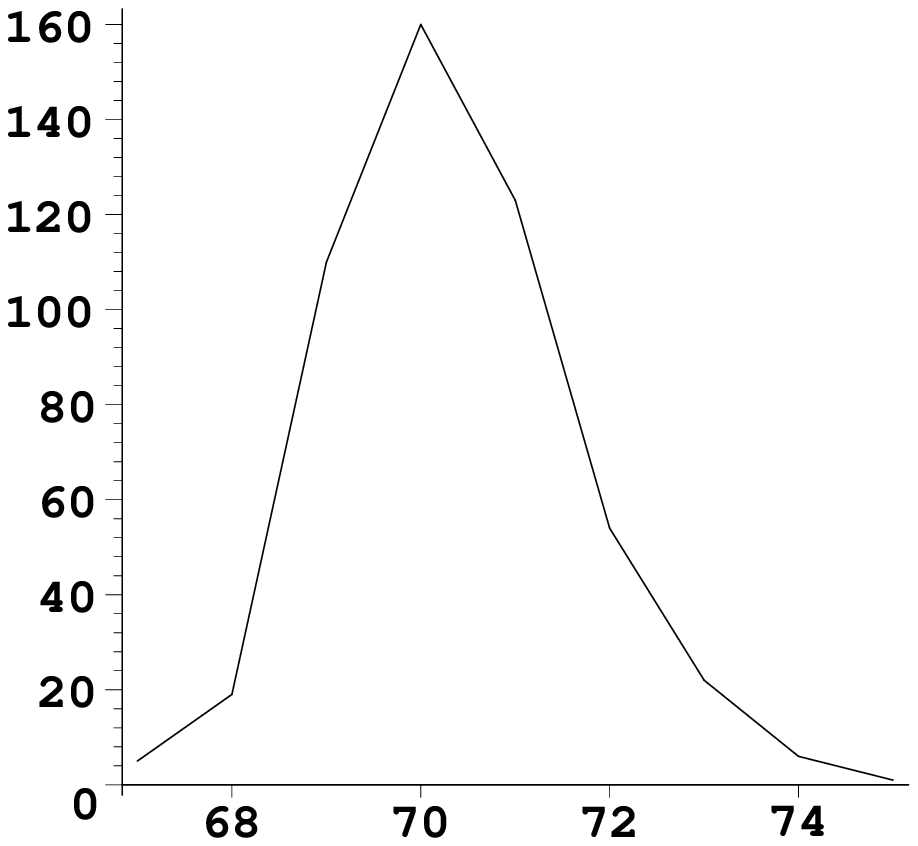}
\hspace{1 cm}
\includegraphics[width=0.4 \textwidth]{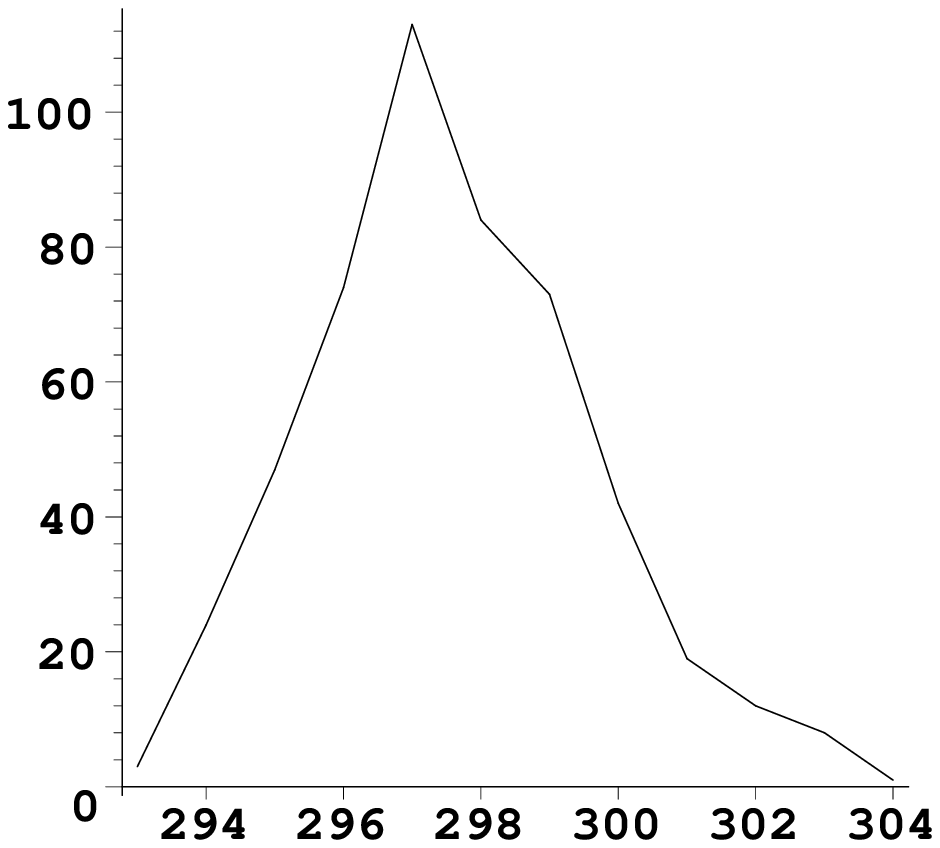}
\caption{Distribution of  $L_n$, 500 samples, $n=25^3$ and $n=10^6$.}
\label{eloszl}
\end{figure}
\thebibliography{200}

\bibitem{ad} D. Aldous, P. Diaconis, {\em Longest increasing subsequences:
from patience sorting to the Baik-Deift-Johansson theorem.} Bull. Amer. Math.
Soc.  {\bf 36}  (1999),  413--432.

\bibitem{as} N. Alon, J. Spencer, {\em The probabilistic method.} 2nd ed. John Wiley \& Sons,
New York, 2000.

\bibitem{ab} G. Ambrus, I. B\'ar\'any, {\em Longest convex chains.} Random
Structures and  Algorithms {\bf 35} (2009), no. 2, 137--162.

\bibitem{anagn} V. Anagnostopoulos and Sz. R\'ev\'esz, {\em Polarization constants
for products of linear functionals over $\R^2$ and $\C^2$ and the Chebyshev
constants of the unit sphere.} Publ. Math. Debrecen, {\bf 68} (2006), 63--75.

\bibitem{andr} G. E. Andrews, {\em Asymptotic expression for the number of solutions of a
general class of Diophantine equations.} Trans. Amer. Math. Soc., {\bf 99}
(1961), 272--277.

\bibitem{arias} J. Arias-de-Reyna, {\em Gaussian variables, polynomials and permanents.} Linear
Algebra Appl. {\bf 285} (1998), 395--408.

\bibitem{baik} J. Baik, P. A. Deift, and K. Johansson, {\em On the distribution
of the length of the longest increasing subsequence of random permutations}. J.
Amer. Math. Soc.  {\bf 12}  (1999),  no. 4, 1119--1178.

\bibitem{ballsymm} K. M. Ball, {\em The plank problem for symmetric bodies},
Invent. Math. {\bf 104} (1991), 535--543.

\bibitem{ballintro} K. M. Ball, {\em  An elementary introduction to modern
convex geometry.} In: Flavors of Geometry (Silvio Levy ed.), MSRI lecture notes
{\bf 31}, Cambridge Univ. Press (1997), 1--58.

\bibitem{ballhand} K. M. Ball, {\em Convex Geometry and Functional Analysis},
in: Handbook of the geometry of Banach spaces, vol. 1 (ed. W. B. Johnson and J.
Lindenstrauss) , Elsevier  (2001), 161--194.

\bibitem{ballcomplex} K. M. Ball, {\em The complex plank problem}, Bull. London
Math. Soc. {\bf 33} (2001), 433--442.

\bibitem{ballprod} K. M. Ball, M. Prodromou, {\em A Sharp Combinatorial Version of Vaaler's
Theorem.} Bull. Lond. Math. Soc. {\bf 41} (2009), 853--858.

\bibitem{B97} I. B\'ar\'any, {\em Affine perimeter and limit shape.} R. reine
angew. Math {\bf 484} (1997), 71--84.

\bibitem{B99} I. B\'ar\'any, {\em Sylvester's question: the probability
that $n$ points are in convex position.}  Ann. Probab. {\bf 27} (1999), no. 4.,
2020--2034.

\bibitem{b08} I. B\'ar\'any, {\em Random points and lattice points in convex
bodies.}  Bull. Amer. Math. Soc.  {\bf 45} (2008),  no. 3, 339--365.

\bibitem{BRSZ} I. B\'ar\'any, G. Rote, W. Steiger, C.-H. Zhang, {\em  A central
limit theorem for convex chains in the square.} Discrete Comput. Geom. {\bf 23}
(2000), 35--50.

\bibitem{BP} I. B\'ar\'any, M. Prodromou, {\em On maximal convex lattice
polygons inscribed in a plane convex set.} Israel J. Math. {\bf 154} (2006),
337--360.

\bibitem{bastero} J. Bastero, M. Romance, {\em John's decomposition of the identity
in the non-convex case.} Positivity {\bf 6} (2002), 1--16.

\bibitem{benitez} C. Ben\'itez, Y. Sarantopoulos and A. M. Tonge, {\em Lower bounds for norms of
products of polynomials.} Math. Proc. Camb. Phil. Soc. {\bf 124} (1998),
395--408.

\bibitem{Bla} W. Blaschke, {\em Vorlesungen \"Uber Differenzialgeometrie II. Affine Differenzialgeometrie.}
Springer, Berlin, 1923.

\bibitem{dineen} S. Dineen, {\em Complex Analysis on Infinite Dimensional
Spaces.} Springer Monographs in Mathematics, Springer-Verlag, Berlin, 1999.

\bibitem{En} N. Enriquez, {\em  Convex chains in $\mathbb{Z}^2$.} To appear. Preprint
available online at { \tt http://arxiv.org/abs/math.PR/0612770}

\bibitem{erdelyi} T. Erd\'elyi, {\em Extremal properties of polynomials.} In: A Panorama of
Hungarian Mathematics in the $20^\textrm{th}$ Century, J\'anos Horv\'ath (Ed.),
Springer Verlag, New York, 2005, 119--156.

\bibitem{frenkel} P. E. Frenkel, {\em Pfaffians, hafnians and products of real linear
functionals.} Math. Res. Lett. {\bf 15} (2008), no. 2.,  351--358.

\bibitem{glader} C. Glader and G. H\"ogn\"{a}s, {\em An equioscillation characterization
of finite Blaschke products}, Complex Variables {\bf 42} (2000), 107--118.

\bibitem{john} F. John, {\em Extremum problems with inequalities as subsidiary
conditions.} Studies and essays presented to R. Courant on his $60^\textrm{th}$
birthday, Interscience, New York (1948), 187--204.

\bibitem{lax} P. Lax, {\em Proof of a conjecture of P. Erd\H os on the derivative of a polynomial},
 Bull. Amer. Math. Soc. {\bf 50} (1944), 509--513.

\bibitem{leung} Y. Leung, W. Li, Rakesh, {\em The $d\,$th linear polarization
constant of $\R^d$.} Journ. Funct. Anal. {\bf 255} (2008), 2861--2871.

\bibitem{ls} B. F. Logan and L. A. Shepp, {\em A variational problem for random Young
tableaux},
 Adv. Math. {\bf 26} (1977), 206--222.

\bibitem{mato1} M. Matolcsi, {\em The linear polarization constant of $\R^n$}. Acta Math. Hungar.
{\bf 108} (2005), no. 1--2, 129--136.

\bibitem{mato2} M. Matolcsi, {\em A geometric estimate on the norm of product of
functionals.} Linear Algebra Appl. {\bf 405} (2005), 304--310.

\bibitem{matomuno} M. Matolcsi and G. Mu\~{n}oz, {\em On the real polarization problem.}
 Math. Inequal. Appl. {\bf 9} (2006), 485--494.

\bibitem{pappas} A. Pappas, Sz. R\'ev\'esz, {\em Linear polarization constants of Hilbert spaces.}
J. Math. Anal. Appl. {\bf 300} (2004), 129--146.

\bibitem{polyaszego} Gy. P\'olya, G. Szeg\H o,  {\em Problems and Theorems in
Analysis.} Springer-Verlag, New York-Heidelberg, 1976. (Original: Aufgaben und
Lehrs\"{a}tze aus der Analysis, Springer, Berlin, 1925.)

\bibitem{RS} A. R\'enyi, R. Sulanke, {\em
\"Uber die konvexe H\"ulle von $n$ zuf\"{a}llig gew\"{a}hlten Punkten.}  Z. Wahrsch. Verw.
Gebiete {\bf 2} (1963), 75--84.

\bibitem{reveszsaran} Sz. R\'ev\'esz and Y. Sarantopoulos, {\em Plank problems, polarization, and
Chebyshev constants.} J. Korean Math. Soc., {\bf 41} (2004) no. 1, 157--174.

\bibitem{riesz14} M. Riesz, {\em Formule d'interpolation pour la d\'eriv\'ee d'un polyn\^ome trigonom\'etrique}, C. R. Acad.
Sci. Paris {\bf 158} (1914), 1152--1154.

\bibitem{ryanturett} R. Ryan and B. Turett, {\em Geometry of Spaces of Polynomials.}
 J. Math. Anal. Appl. {\bf 221} (1998), 698--711.

\bibitem{szego} G. Szeg\H o, {\em  Orthogonal polynomials.} AMS Colloquium Publications, New York, 1939.

\bibitem{tal} M. Talagrand, {\em  A new look at independence.} Ann. Probab.  {\bf 24}
(1996), 1--34.

\bibitem{tracy} C. A. Tracy and H. Widom, {\em Level-spacing distributions and the Airy kernel}.
Comm. Math. Phys., {\bf 159} (1994), 151--174.

\bibitem{val} P. Valtr, {\em The probability that $n$ points are in convex position.}
Discrete Comput. Geom.  {\bf 13} (1995), 637--643.

\bibitem{vk} A. M. Vershik and S. V. Kerov, {\em  Asymptotics of the Plancherel measure
of the symmetric group and the limiting form of Young tables.} Dokl. Acad.
Nauk. SSSR, {\bf 233} (1977), 1024--1027.


\end{document}